\definecolor{shadecolor}{gray}{0.90}
\DeclareSymbolFont{rsfscript}{OMS}{rsfs}{m}{b}
\DeclareSymbolFontAlphabet{\mathrsfs}{rsfscript}
\theoremstyle{plain}
\newtheorem{theoreme}{Theorem}[section]
\newtheorem{proposition}[theoreme]{Proposition}
\newtheorem{lemma}[theoreme]{Lemma}
\newtheorem{corollary}[theoreme]{Corollary}
\newtheorem{definition}[theoreme]{Definition}
\theoremstyle{definition}
\theoremstyle{remark}
\newtheorem*{remark}{Remark}
\DeclareMathOperator{\End}{End}
\DeclareMathOperator{\Hom}{Hom}
\DeclareMathOperator{\Rep}{Rep}
\DeclareMathOperator{\id}{id}
\DeclareMathOperator{\tr}{tr}
\DeclareMathOperator{\Tr}{Tr}
\DeclareMathOperator{\Irr}{Irr}
\DeclareMathOperator{\ev}{ev}
\DeclareMathOperator{\coev}{coev}
\DeclareMathOperator{\Gr}{Gr}
\DeclareMathOperator{\rev}{rev}
\DeclareMathOperator{\diag}{diag}
\def\boitegrise#1#2{\begin{centerline}{\fcolorbox{black}{shadecolor}{~
    \begin{minipage}[t]{#2}{\vphantom{~}#1\vphantom{$A_{\displaystyle{A_A}}$}}
            \end{minipage}~}}\end{centerline}\medskip}
\title{Crossed $S$-matrices and Fourier matrices for Coxeter groups with automorphism }
\author{{\sc Abel Lacabanne}}
\address{
Institut Montpelliérain Alexander Grothendieck (CNRS: UMR 5149), 
Université de Montpellier,
Case Courrier 051,
Place Eugène Bataillon,
34095 MONTPELLIER Cedex,
FRANCE}
\date{\today}
\email{abel.lacabanne@umontpellier.fr}
\begin{document}

\begin{abstract}
  We study crossed $S$-matrices for braided $G$-crossed categories and reduce their computation to a submatrix of the de-equivariantization. We study the more general case of a category containing the symmetric category $\Rep(A,z)$ with $A$ a finite cyclic group and $z\in A$ such that $z^2=1$. We give two example of such categories, which enable us to recover the Fourier matrix associated with the big family of unipotent characters of the dihedral groups with automorphism as well as the Fourier matrix of the big family of unipotent characters of the Ree group of type ${}^2F_4$.
\end{abstract}

\maketitle 

\pagestyle{myheadings}
\markboth{\sc A. Lacabanne}{\sc Crossed $S$-matrices and Fourier matrices for Coxeter groups with automorphism}

In his classification of unipotent characters of a finite group of Lie type $G(q)$, two tools used by Lusztig are a partition of these characters into families and the notion of a non-abelian Fourier transform. One remarkable property of this classification is that it only depends on the Weyl group of $G$ and not on the number $q$. It then has been suggested by Lusztig \cite{lusztig-unipotent-coxeter} and by Broué-Malle-Michel \cite{broue-malle-michel1,broue-malle-michel2}, that a notion of ``unipotent characters'', a partition into families and a Fourier transform should be associated not only with a Weyl group, but more generally with a complex reflection group.

In the case of a Weyl group, the Fourier matrix can be understood in terms of modular invariants of a modular category, namely the category of modules over the (possibly twisted) Drinfeld double of a finite group. We hope that every Fourier matrix constructed in the more general case of complex reflection groups has a categorical explanation as in \cite{bonnafe-rouquier,lacabanne-slightly,lacabanne-double}. Lusztig \cite[3.8]{lusztig-exotic} explains the Fourier matrix of the big family of unipotent characters for a dihedral group with a category constructed from quantum $\mathfrak{sl}_2$ at an even root of unity. We will detail this construction in Section \ref{sec:WZW} and \ref{sec:degree_0}. In type $H_4$, there exists a Fourier matrix associated with a family of size $74$ for which no categorical interpretation is known.

This article focuses on Fourier matrices associated with families of unipotent characters of groups with automorphism, such as Suzuki and Ree groups \cite{geck-malle}. In this setting, the Fourier matrices are not anymore symmetric and a categorical explanation with a modular category is not possible. Following Deshpande \cite{deshpande}, we associate to a braided $G$-crossed fusion category a crossed $S$-matrix, which is not symmetric and should play the role of the Fourier matrix in this twisted case.

It is possible to understand this crossed $S$-matrix, via equivariantization, in terms of a non-degenerate braided pivotal category $\mathcal{C}$ containing the symmetric category $\Rep(A,z)$ of representations of a cyclic group $A$, together with an involution $z\in A$: the crossed $S$-matrix is obtained as a submatrix of the $S$-matrix of the category $\mathcal{C}$. We construct examples in which this category $\mathcal{C}$ encompasses both the Fourier matrix of the family of unipotent characters for the twisted group and the corresponding family for the non-twisted group.

Such categories were studied by Beliakova-Blanchet-Contreras \cite{beliakova-blanchet-contreras} in order to obtain refinements of the Witten-Reshetikhin-Turaev $3$-manifold invariants. Nevertheless, we will not use their terminology of refinable and spin modular categories since we work with categories which are not necessarily ribbon.

This paper is organized as follows. In Section \ref{sec:braided-g-crossed-twisted}, we start by fixing some notations and then we recall the definition of Deshpande of a crossed $S$-matrix in the case of braided $G$-crossed categories. Since we work in a not necessarily spherical framework, we show in Section \ref{sec:categ-cont-rep-g} that this crossed $S$-matrix, under a non-degeneracy hypothesis, is invertible and satisfies some relation with respect to the twist of the category. In Section \ref{sec:twisted-modular-dihedral}, we construct a category containing $\Rep(\mathbb{Z}/2\mathbb{Z})$ which enables us to recover the Fourier matrix of the big family of unipotent characters for dihedral groups, the eigenvalues of the Frobenius, and similar date for dihedral groups with automorphism. Finally in Section \ref{sec:central-extension}, we show that the category of finite dimensional modules of the Drinfeld double of a central extension $\tilde{G}$ of a finite group $G$ fits into the framework of Section \ref{sec:categ-cont-rep-g}. We relate the modularization of the component of trivial degree with the category of finite dimensional representations of the Drinfeld double of $G$. We conclude this Section with the example of the symmetric group $\mathfrak{S}_4$ and the binary octahedral group, which enables us to recover the Fourier matrix associated with the Ree group of type ${}^2F_4$.


\subsection*{Acknowledgements}
  I warmly thank my advisor C. Bonnaf\'e for many fruitful discussions
  and his constant support.

\section{Braided $G$-crossed categories and twisted $S$-matrices}
\label{sec:braided-g-crossed-twisted}

We start by some notation and we recall the definition of a braided $G$-crossed
category, notion due to Turaev \cite{turaev} and
explain how to define a crossed $S$-matrix associated to any $g\in G$,
following \cite{deshpande}. We fix $G$ a finite group with identity $1$.

\subsection{Notations}
\label{sec:notations}

In this paper, we will work over an algebraicailly closed field $\Bbbk$ of characteristic $0$. We recall some classical extra structure for a monoidal category $(\mathcal{C},\otimes,\mathbf{1})$. A left dual of an object $X$ is the datum of $(X^*,\ev_X,\coev_X)$ where $X^*$ is an object of $\mathcal{C}$, $\ev_X\colon X^*\otimes X \rightarrow \mathbf{1}$ and $\coev_X\colon\mathbf{1}\rightarrow X\otimes X^*$ such that the following compositions are identities
  \[
    \begin{tikzcd}[column sep=huge]
      X \arrow{r}{\coev_X \otimes \id_X}&X\otimes X^*\otimes X
      \arrow{r}{\id_X\otimes\ev_X} &X,
  \end{tikzcd}
\]
\[
    \begin{tikzcd}[column sep=huge]
      X^* \arrow{r}{\id_{X^*}\otimes\coev_X}&X^*\otimes X\otimes X^*
      \arrow{r}{\ev_X\otimes\id_{X^*}} &X^*.
  \end{tikzcd}
\]

Similarly, there exists a notion of a right dual. If a left dual exists, it is then unique up to isomorphism. A monoidal category is said to be rigid if it admits both left and right duals. In a rigid category, the choice of a left dual for any object define a contravatiant duality functor $*\colon \mathcal{C}\rightarrow\mathcal{C}$.

We will work with fusion categories as defined in \cite[Definition 4.1.1]{egno}. A fusion category is equipped with a pivotal structure if there is an isomorphism of monoidal functors $a_X\colon X\rightarrow X^{**}$. We also recall the definition of quantum traces in a pivotal fusion category with unit object simple. For $X$ an object and $f$ an endomorphism of $X$, we define the right quantum trace $\tr_{X}^R(f)$ of $f$ as the following endomorphism of $\mathbf{1}$:
\[
\begin{tikzcd}
  \mathbf{1} \ar[r,"\coev_X"] & X\otimes X^* \ar[r,"(a_x\circ f)\otimes \id_{X^*}"] &[2em] X^{**}\otimes X^* \ar[r,"\ev_{X^*}"] & \mathbf{1}.
\end{tikzcd}
\]
Since in a fusion category the unit object is simple, we identify this endomorphism with the unique scalar $\lambda\in\Bbbk$ such that this endomorphism is equal to $\lambda \id_{\mathbf{1}}$. Similarly, we may define the left quantum trace as the composition
\[
\begin{tikzcd}
  \mathbf{1} \ar[r,"\coev_{X^*}"] & X^*\otimes X^{**} \ar[r,"\id_{X^*}\otimes(f\circ a_x^{-1})"] &[2em] X^{*}\otimes X \ar[r,"\ev_{X}"] & \mathbf{1}.
\end{tikzcd}
\]
The pivotal structure is said to be spherical if these two quantum traces coincide. We then define the right and left quantum dimension of an object $X$ by taking the trace of the identity map:
\[
  \dim^R(X) := \tr^R_X(\id_X)\quad\text{and}\quad \dim^L(X) := \tr^L_X(\id_X).
\]
It is known that in a fusion category, the simple objects are of non-zero right and left quantum dimensions. We moreover define the dimension of the category $\mathcal{C}$ as
\[
  \dim(\mathcal{C}) := \sum_{X\in \Irr(\mathcal{C})}\dim^L(X)\dim^R(X),
\]
where $\Irr(\mathcal{C})$ denotes the set of isomorphism classes of simple objects in $\mathcal{C}$.

For a rigid braided tensor category, there exists a natural
isomorphism $u_X \colon X \rightarrow X^{**}$, called the Drinfeld
morphism, defined as the composition
\[
  \begin{tikzcd}
    X\ar{r}{\coev_{X^*}} & X\otimes X^* \otimes
    X^{**}\ar{r}{c_{X,X^*}} & X^*\otimes X\otimes
    X^{**}\ar{r}{\ev_X} & X^{**}.
  \end{tikzcd}
\]
It satisfies for all $X,Y\in\mathcal{C}$,
\[
  u_{X}\otimes u_{Y}= u_{X\otimes Y}\circ c_{Y,X}\circ c_{X,Y}.
\]
To give a pivotal structure $a$ on $\mathcal{C}$ is therefore equivalent to give a \emph{twist} on $\mathcal{C}$, which is a natural isomorphism $\theta_X \colon X \rightarrow X$ satisfying for all $X,Y\in\mathcal{C}$
\[
  \theta_{X\otimes Y}=(\theta_X \otimes \theta_Y) \circ c_{Y,X}\circ c_{X,Y}.
\]
The pivotal structure and the twist are related by $a=u\theta$. We will often endow the braided pivotal category $\mathcal{C}$ with the twist given by the pivotal structure.

\subsection{Braided $G$-crossed fusion categories}
\label{sec:braided-g-crossed}

\begin{definition}
  \label{def:braded-g-crossed-fusion}
  Let $\mathcal{C}$ be a fusion category over $\Bbbk$. We say that $\mathcal{C}$ is
  a braided $G$-crossed fusion category if it is equipped with the following
  structures:
  \begin{enumerate}
  \item a grading $\mathcal{C}=\bigoplus_{g\in G}\mathcal{C}_g$,
  \item an action $g\mapsto T_g$ of $G$ such that
    $T_h(\mathcal{C}_g)\subset\mathcal{C}_{hgh^{-1}}$,
  \item natural isomorphisms for all $g\in G$, $X\in\mathcal{C}_g$ and $Y\in\mathcal{C}$
    \[
      c_{X,Y}\colon X\otimes Y\rightarrow T_{g}(Y)\otimes X,
    \]
   such that axioms similar to the usual hexagons are satisfied
   \cite[(8.106),(8.107)]{egno}, and that the braiding is compatible
   with the action of $G$ \cite[(8.105)]{egno}.
 \end{enumerate}
\end{definition}

A theorem of Kirillov \cite{kirillov} and of Müger \cite{mueger}
affirms that any braided $G$-crossed category can be obtained as the
de-equivariantization of a braided fusion category containing
$\Rep(G)$.

From now on, we fix a braided $G$-crossed fusion category
$\mathcal{C}$ and we denote the grading of an homogeneous object $X$
by $d(X)\in G$. The left dual of $X$ is denoted by $X^*$ and the evaluation and coevaluation maps are respectively denoted by $\ev_X\colon X^*\otimes X\rightarrow\mathbf{1}$ and $\coev_X\colon \mathbf{1}\rightarrow X\otimes X^*$. There exists an analogue of the Drinfeld morphism for
such categories. For any $X\in\mathcal{C}_g$, it is defined as the
composition
\[
\begin{tikzcd} 
  X \ar[r,"\coev_{T_{d(X)^{-1}}(X^*)}"]&[3em] X\otimes T_{d(X)^{-1}}(X^*) \otimes T_{d(X)^{-1}}(X)^{**} \ar[r,"c_{X,T_{d(X)^{-1}}(X^*)}"] &[3em]X^*\otimes X \otimes T_{d(X)^{-1}}(X)^{**} \\
  & \phantom{X\otimes T_{d(X)^{-1}}(X^*) \otimes T_{d(X)^{-1}}(X)^{**}}\ar[r,"\ev_X"] &T_{d(X)^{-1}}(X)^{**},
\end{tikzcd}
\]
and it satisfies
\[
  u_X\otimes u_Y = u_{T_{d(X)d(Y)d(X)^{-1}}(X)\otimes T_{d(X)}(Y)} \circ c_{T_{d(X)}(Y),X} \circ c_{X,Y}.
\]

Since we want to associate to $\mathcal{C}$ and $g\in G$ a crossed
$S$-matrix, we need a pivotal structure on $\mathcal{C}$, which is
moreover compatible with the action of $G$:
\[
a_X\colon X\rightarrow X^{**}\qquad\text{such that}\qquad T_g(a_X)=a_{T_g(X)},
\]
for any $X\in\mathcal{C}_g$. To such a pivotal structure, one can
associate a twist \cite[Section V.2.3]{turaev} by
\[
  a_X = u_{T_{d(X)}(X)}\circ\theta_X.
\]

If the group $G$ is the trivial group, we recover the usual notions of braiding, of pivotal structure and of twist.

\subsection{Crossed $S$-matrix}
\label{sec:crossed-s-matrix}

We recall the definition of \cite{deshpande} of the crossed
$S$-matrix. Here, we fix $g\in G$ and consider $\mathcal{C}_g$ as a
$\mathcal{C}_1$-module category. For any $X$ and $Y$ of respective
degree $1$ and $g$, the double braiding is a morphism
\[
  \begin{tikzcd}
    X\otimes Y\ar[r,"c_{X,Y}"] & Y\otimes X \ar[r,"c_{Y,X}"]
    & T_g(X)\otimes Y.
  \end{tikzcd}
\]
Since we want to compute the trace of this morphism, we will restrict
ourselves to objects $X$ such that $T_g(X)$ is isomorphic to $X$, and
we choose such an isomorphism $\gamma$. By doing so, we can rephrase it by
saying that $X$ and $\gamma$ define an object of the
equivariantization $\mathcal{C}^{\langle g\rangle}$ of $\mathcal{C}$
by the cyclic group generated by $g$. Therefore, we have an
isomorphism
\[
  \begin{tikzcd}
    X\otimes Y\ar[r,"c_{X,Y}"] & Y\otimes X \ar[r,"c_{Y,X}"]
    & T_g(X)\otimes Y\ar[r,"\gamma\otimes\id_Y"]& X\otimes Y,
  \end{tikzcd}
\]
and it is possible to compute the trace using the pivotal structure.

\begin{definition}
  \label{def:crossed-S-mat}
  For $g\in G$, the crossed $S$-matrix $S_{1,g}$ is the matrix with
  rows indexed by the isomorphism classes of objects
  $X\in\mathcal{C}_1$ such that $T_g(X)\simeq X$ and with columns
  indexed by isomorphism classes of objects $Y\in\mathcal{C}_g$ with
  value
  \[
    (S_{1,g})_{X,Y}=\tr^R_{X\otimes Y}((\gamma\otimes\id_Y)\circ c_{Y,X}\circ c_{X,Y}),
  \]
  where $\gamma$ is a chosen isomorphism between $T_g(X)$ and $X$: we
  have chosen a lifting in $\mathcal{C}^{\langle g\rangle}$ for any $X\in\mathcal{C}_1$ such that $T_g(X)\simeq X$.
\end{definition}

\begin{remark}
  \label{rk:change-iso-X}
  If there exists an isomorphism $\gamma$ between $T_g(X)$ and $X$, then, for
  any root of unity $\xi\in\Bbbk$ of order the order of $g$ in $G$,
  the morphism $\xi\gamma$ is also an isomorphism between $T_g(X)$ and
  $X$. Using the isomorphism $\xi\gamma$ instead of $\gamma$
  multiplies the row of $X$ by $\xi$.
\end{remark}

Similarly to \cite{deshpande}, we will work in the category
$\mathcal{C}^{\langle g\rangle}$ in order to state some properties of
this matrix. Since our pivotal structure is not necessarily spherical,
we cannot use directly the results of \cite{deshpande}. Since we are
only interested in the objects of degree $g$ and in the
equivariantization $\mathcal{C}^{\langle g\rangle}$, we may and will
suppose that the group $G$ is cyclic, generated by $g$.

\boitegrise{\textbf{Hypothesis.} \emph{From now on, we suppose that $G$ is
    a cyclic group generated by $g\in G$. We denote by $\hat{G}$ the group of characters of $G$.}}{0.8\textwidth}

The equivariantization $\mathcal{C}^G$ is a braided category
containing $\Rep(G)$, and the braiding between two homogeneous objects
$(X,(u_h)_{h\in G})$ and $(Y,(v_h)_{h\in G})$ is given by
$(u_{g^{d(X)}}\otimes\id_X)\circ c_{X,Y}$. Here $u_h$ is an isomorphism between $T_h(X)$ and $X$ satisfying some relations, see \cite[Section 2.7]{egno} for more details.

\begin{lemma}
  \label{lem:iso-Y-gY}
  Let $\mathcal{C}$ be a braided $G$-crossed category with $G$ cyclic
  generated by $g$ and $Y\in \mathcal{C}_g$. Then there exists an
  isomorphism between $T_g(Y)$ and $Y$. 
\end{lemma}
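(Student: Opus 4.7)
The plan is to reduce to the case where $Y$ is simple, and then exploit the crossed braiding $c_{Y,Y^*}$ together with the coevaluation of $Y$. Since $\mathcal{C}$ is semisimple and every object of $\mathcal{C}_g$ decomposes into simple summands all of degree $g$, and $T_g$ is a tensor autoequivalence preserving this decomposition, it suffices to treat $Y$ simple.

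Assume $Y$ is simple. Then $Y^*$ is a simple object of degree $g^{-1}$, and the crossed braiding provides an isomorphism
\[
c_{Y,Y^*}\colon Y\otimes Y^* \longrightarrow T_g(Y^*)\otimes Y
\]
in $\mathcal{C}_1$. Composing with the nonzero morphism $\coev_Y\colon \mathbf{1}\to Y\otimes Y^*$ produces a nonzero morphism $\mathbf{1}\to T_g(Y^*)\otimes Y$. By semisimplicity, $\mathbf{1}$ appears as a direct summand of $T_g(Y^*)\otimes Y$, so the simple object $T_g(Y^*)$ is a right dual of $Y$.

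To conclude, I will invoke the fusion-category fact that for a simple object the left and right duals coincide up to isomorphism: the adjunction identifications $\dim\Hom(\mathbf{1},A\otimes B)=\dim\Hom({}^*B,A)$ and $\dim\Hom(A\otimes B,\mathbf{1})=\dim\Hom(A,B^*)$ both compute the multiplicity of $\mathbf{1}$ in $A\otimes B$, forcing ${}^*B\simeq B^*$ for simple $B$. Applied here, this gives $T_g(Y^*)\simeq Y^*$. Since $T_g$ commutes with left duality, $T_g(Y)^*\simeq Y^*$, and as the left dual is a bijection on isomorphism classes of simple objects, we finally obtain $T_g(Y)\simeq Y$. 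The only delicate point I foresee is the absence of a pivotal structure in the hypothesis, which forces careful bookkeeping between left and right duals; this is precisely why I route the argument through the fusion-category identification of the two duals on simples, rather than attempting a direct construction of an isomorphism $T_g(Y)\to Y$ (which would typically require pivotality to close up the loop of evaluations and coevaluations).
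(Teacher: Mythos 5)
Your proof is correct and is essentially the paper's argument: both use the crossed braiding together with rigidity to force the unit object to appear as a direct summand of a suitable tensor product, which pins down the required isomorphism. The only difference is cosmetic: the paper braids $Y^*$ (of degree $g^{-1}$) past $T_g(Y)$ to get $Y^*\otimes T_g(Y)\simeq Y\otimes Y^*$ and concludes at once, whereas you braid $Y$ past $Y^*$, land on $T_g(Y^*)\simeq {}^*Y$, and then close the loop via the standard facts that ${}^*Y\simeq Y^*$ for simples in a fusion category and that $T_g$ commutes with duality.
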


\begin{proof}
  The braiding gives an isomorphism between $Y^*\otimes T_g(Y)$ and
  $Y\otimes Y^*$. The unit object is therefore a direct summand of
  $Y^*\otimes T_g(Y)$, which is possible only if $T_g(Y)\simeq Y$.
\end{proof}

\begin{proposition}
  \label{prop:submatrix-equiv}
  Let $\mathcal{C}$ be a braided $G$-crossed category, with $G$
  generated by $g$. If we choose a lifting
  $(X,(u_h)_{h\in G})\in\mathcal{C}^G$ for any $X\in\mathcal{C}_1$ such that
  $T_g(X)\simeq X$ and a lifting $(Y,(v_h)_{h\in G})\in\mathcal{C}^G$ for any
  $Y\in\mathcal{C}_g$, the crossed $S$-matrix $S_{1,g}$ is a
  submatrix of the $S$-matrix of the braided fusion category $\mathcal{C}^G$.
\end{proposition}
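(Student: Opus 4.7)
The plan is to compute directly the $S$-matrix entry of $\mathcal{C}^G$ indexed by a degree-$1$ lift $(X,(u_h)_{h\in G})$ and a degree-$g$ lift $(Y,(v_h)_{h\in G})$, and check that it coincides with the entry $(S_{1,g})_{X,Y}$ of Definition \ref{def:crossed-S-mat}.

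First, I would unwind the description of the braiding on $\mathcal{C}^G$ recalled above the statement. For the braiding $(X,u)\otimes(Y,v)\rightarrow(Y,v)\otimes(X,u)$, the relevant index is $d(X)=1$, so the intervening equivariance isomorphism is the identity and the braiding reduces to $c_{X,Y}$. For the opposite braiding $(Y,v)\otimes(X,u)\rightarrow(X,u)\otimes(Y,v)$, the index is $d(Y)=g$ and the relevant equivariance datum is $u_g\colon T_g(X)\rightarrow X$; hence this braiding is $(u_g\otimes\id_Y)\circ c_{Y,X}$. Composing, the double braiding in $\mathcal{C}^G$ equals
\[
  (u_g\otimes\id_Y)\circ c_{Y,X}\circ c_{X,Y},
\]
which, upon setting $\gamma:=u_g$, is exactly the endomorphism whose right quantum trace defines $(S_{1,g})_{X,Y}$.

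Next, I would argue that the right quantum trace of this endomorphism computed in $\mathcal{C}^G$ coincides with its right quantum trace in $\mathcal{C}$. This reduces to four standard facts: the forgetful functor $\mathcal{C}^G\rightarrow\mathcal{C}$ is strict monoidal; the dual of $(X,u)$ in $\mathcal{C}^G$ has underlying object $X^*$ equipped with the canonically induced equivariance; the pivotal structure on $\mathcal{C}^G$ is inherited from that on $\mathcal{C}$ thanks to the $G$-equivariance of $a$; and $\End_{\mathcal{C}^G}(\mathbf{1})=\Bbbk=\End_{\mathcal{C}}(\mathbf{1})$. Consequently the composite defining $\tr^R$ in $\mathcal{C}^G$ forgets to the composite defining $\tr^R$ in $\mathcal{C}$ and yields the same scalar.

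Finally, in order to know that the chosen liftings $(X,u)$ and $(Y,v)$ really are simple objects of $\mathcal{C}^G$, and hence genuinely index rows and columns of the $S$-matrix of $\mathcal{C}^G$, one uses simplicity of $X$ and $Y$ in $\mathcal{C}$: the inclusion $\End_{\mathcal{C}^G}((X,u))\subset\End_{\mathcal{C}}(X)=\Bbbk$ together with $\id\in\End_{\mathcal{C}^G}((X,u))$ forces equality, and similarly for $(Y,v)$. The liftings therefore form a subset of the simples of $\mathcal{C}^G$, and the matching of entries computed above exhibits $S_{1,g}$ as a submatrix of the $S$-matrix of $\mathcal{C}^G$. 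The main obstacle in writing this out cleanly is the bookkeeping --- carefully tracking which equivariance datum $u_h$ or $v_h$ appears at each step --- and spelling out why the pivotal structure, the duality and the traces in $\mathcal{C}^G$ really do descend along the forgetful functor; once this is in place, the comparison of matrix entries is essentially immediate.
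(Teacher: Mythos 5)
Your proposal is correct and follows exactly the route the paper takes implicitly: the paper states this proposition without a separate proof, treating it as immediate from the displayed description of the braiding on $\mathcal{C}^G$ (the double braiding of the lifts unwinds to $(u_g\otimes\id_Y)\circ c_{Y,X}\circ c_{X,Y}$, whose trace is computed with the pivotal structure inherited from $\mathcal{C}$ via the $G$-compatibility $T_g(a_X)=a_{T_g(X)}$). Your additional remarks — that equivariant lifts of simple objects remain simple and that the trace is unchanged under the forgetful functor — are exactly the bookkeeping the paper leaves to the reader, so nothing is missing.
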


\begin{remark}
  \label{rk:change-iso-Y}
  If we use another lifting for $Y$, this has no effect on the crossed
  $S$-matrix, since $\gamma_Y$ does not appear in the computation of
  $\tr^R_{X\otimes Y}((u_{g}\otimes\id_Y)\circ c_{Y,X}\circ c_{X,Y})$.
\end{remark}

The category $\mathcal{C}^G$ satisfies $\Rep(G)\subset
\mathcal{C}^G$ and we denote by $X_\alpha$ the simple object
corresponding to the simple representation $\alpha\in\hat{G}$ of
$G$. Tensoring by the invertible objects $X_\alpha$ gives an action of
$\hat{G}$ on the objects of $\mathcal{C}^G$, and this allows us to
define a grading on $\mathcal{C}^G$ by the group $\hat{G}$ (see
\cite[Remark 3.29]{mueger}). For any $h\in G$, denote by $(\mathcal{C}^G)_h$
the full subcategory of objects $X\in\mathcal{C}^G$ such that
\[
  c_{X,X_\alpha}\circ c_{X_\alpha,X}=\alpha(h)\id_{X_\alpha\otimes X},
\]
for any $\alpha\in \hat{G}$.

\begin{proposition}
  \label{prop:grading-equiv}
  This defines a grading on the category $\mathcal{C}^G$ such that any
  lifting $(X,(u_h)_{h\in G})$ of $X\in\mathcal{C}$ of degree $h$ is of degree $h$ in $\mathcal{C}^G$. 
\end{proposition}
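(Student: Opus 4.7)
The statement has two separate assertions: that the decomposition $\mathcal{C}^G = \bigoplus_{h \in G} (\mathcal{C}^G)_h$ is a grading of fusion categories, and that a lifting of an object of $\mathcal{C}$ of degree $h$ in the $G$-crossed structure lands in the piece of degree $h$ for this new grading. I would treat them in that order.

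For the first assertion, I would start from the fact that each $X_\alpha$ is invertible in $\mathcal{C}^G$, being a simple object of $\Rep(G) \subset \mathcal{C}^G$ with $\alpha \in \hat{G}$. For any simple $Z \in \mathcal{C}^G$, Schur's lemma then forces the double braiding $c_{Z, X_\alpha} \circ c_{X_\alpha, Z}$ to equal $\chi_Z(\alpha) \id_{X_\alpha \otimes Z}$ for a well-defined scalar. Applying the hexagon axioms to the pair $X_\alpha, X_\beta$ yields $\chi_Z(\alpha \beta) = \chi_Z(\alpha) \chi_Z(\beta)$, so $\chi_Z \in \hat{\hat{G}}$, which I identify with $G$ canonically. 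This produces $h(Z) \in G$, and extending by direct sum gives the decomposition. The multiplicativity $(\mathcal{C}^G)_h \otimes (\mathcal{C}^G)_k \subset (\mathcal{C}^G)_{hk}$ is then a second application of the hexagons, expressing the double braiding of $Z \otimes Z'$ with $X_\alpha$ in terms of those of $Z$ and $Z'$ individually and yielding $\chi_Z(\alpha) \chi_{Z'}(\alpha) = \alpha(hk)$.

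For the second assertion, I would use the explicit formula for the braiding in $\mathcal{C}^G$ recalled before Lemma \ref{lem:iso-Y-gY}, in which the braiding of a pair of equivariant objects is the underlying $\mathcal{C}$-braiding post-composed with the $G$-action datum on the second factor evaluated at the grading of the first. Taking $(X, (u_k))$ of degree $h$ and $X_\alpha = (\mathbf{1}, \alpha(\cdot) \id_{\mathbf{1}})$, the structure map of $X_\alpha$ at $h$ is $\alpha(h) \id_{\mathbf{1}}$, which gives $c^G_{X, X_\alpha} = \alpha(h) c_{X, \mathbf{1}}$. Symmetrically, $X_\alpha$ is a lifting of $\mathbf{1} \in \mathcal{C}_1$, so its degree is trivial and $c^G_{X_\alpha, X}$ only involves $u_1 = \id_X$, reducing to $c_{\mathbf{1}, X}$. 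Composing the two and applying the unit constraints produces $\alpha(h) \id_{X_\alpha \otimes X}$, confirming that $(X, (u_k)) \in (\mathcal{C}^G)_h$.

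The main obstacle I foresee is bookkeeping: ensuring the canonical identification $\hat{\hat{G}} \simeq G$ used in the first part matches the formula used in the second (so that the degree of the lifting comes out to $h$ and not $h^{-1}$), and tracking which structure map acts on which tensor factor in the equivariantization braiding. Once the conventions are fixed, both parts are short, direct applications of the hexagons for the $G$-crossed braiding.
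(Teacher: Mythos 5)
Your proposal is correct and follows essentially the same route as the paper: homogeneity of simple objects via Schur's lemma applied to the double braiding with the invertible objects $X_\alpha$, the hexagon axioms to see that $\alpha\mapsto\lambda_\alpha$ is a character of $\hat{G}$ and that the decomposition is multiplicative, and then the explicit formula for the equivariantized braiding (structure map of the second factor evaluated at the degree of the first, which for $X_\alpha$ is $\alpha(h)\varphi_h$ and for the lifting of $X$ at degree $1$ is the identity) to see that the double braiding of a lifting with $X_\alpha$ is $\alpha(h)$ times the identity. The paper's proof is the same computation, with the grading check delegated to \cite[Lemma 8.9.1]{egno}.
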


\begin{proof}
  Any simple object $X\in\mathcal{C}^G$ is homogeneous. Indeed, since $X_\alpha$ is an invertible object,
  the double braiding $c_{X,X_\alpha}\circ c_{X_\alpha,X}$ is the
  multiplication by a scalar $\lambda_\alpha$. The hexagon axioms for
  the braiding immediately show that $\alpha\mapsto \lambda_\alpha$ is
  a character of $\hat{G}$, and therefore there exists $h\in G$ such
  that $\lambda_\alpha=\alpha(h)$ for any $\alpha\in\hat{G}$.

  Checking that this indeed a graduation is again an application of
  the hexagon axioms for the braiding \cite[Lemma 8.9.1]{egno}.

  Finally, let $X$ be an object in $\mathcal{C}$ of degree $h$ and
  $(X,(u_h)_{h\in G})$ a lifting of $X$ in $\mathcal{C}^{G}$. The double braiding $c_{X,X_\alpha}\circ
  c_{X_\alpha,X}$ between $X_\alpha$ and $(X,(u_h)_{h\in G})$ in $\mathcal{C}^G$ is the morphism
  \[
    \begin{tikzcd}
      \mathbf{1}\otimes X \ar[r,"c_{\mathbf{1},X}"] & X\otimes
      \mathbf{1} \ar[r,"c_{X,\mathbf{1}}"] & T_h(\mathbf{1})\otimes X
      \ar[r,"\alpha(h)\varphi_h\otimes\id_X"] &[2em] \mathbf{1}\otimes X,
    \end{tikzcd}
  \]
  where $\varphi_h\colon T_h(\mathbf{1}) \rightarrow \mathbf{1}$ is
  the isomorphism coming from the tensor structure of the action of
  $G$. The double braiding is then a multiple of the identity. 
\end{proof}

Since we have some choice on the lifting $(X,(u_h)_{h\in G})$ of an object
$X$, it is crucial to understand this choice in terms of the category
$\mathcal{C}^G$. In fact, choosing another lifting is corresponds to the
tensorization in $\mathcal{C}^G$ by an invertible object of the form
$X_{\alpha}$. The simple objects $X\in\mathcal{C}_1$ such that there
exists an isomorphism between $T_g(X)$ and $X$ have exactly $\# G$
non-isomorphic liftings in $\mathcal{C}^G$, and the group $\hat{G}$
consequently acts transitively and freely on these liftings. The other
simple objects in $(\mathcal{C}^G)_1$ have a non simple image in
$\mathcal{C}$ by the forgetful functor $\mathcal{C}^G\rightarrow\mathcal{C}$.

Now, we choose a set of representatives $[I_{1,n}]$ of the set of
isomorphism classes of simple objects in $(\mathcal{C}^G)_1$ with trivial
stabilizer under tensorization by the $X_\alpha$'s, under the action
of $\hat{G}$. Similarly, we choose a set of representatives 
$[I_g]$ of the set of isomorphism classes of simple objects in
$(\mathcal{C}^G)_g$ under the action of $\hat{G}$. With these choices, we make explicit Proposition \ref{prop:submatrix-equiv}.

\begin{proposition}
  \label{prop:submatrix-equiv-explicit}
  Let $\mathcal{C}$ be a braided $G$-crossed category, where $G$ is a
  finite cyclic group generated by $g$. The crossed $S$-matrix
  $S_{1,g}$ is the submatrix $(S_{X,Y})_{X\in [I_{1,n}],Y\in [I_g]}$
  of the $S$-matrix of $\mathcal{C}^G$.
\end{proposition}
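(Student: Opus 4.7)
The plan is to combine Proposition \ref{prop:submatrix-equiv} with the explicit description of $\hat{G}$-orbits of liftings that was assembled in the paragraph preceding the statement. Proposition \ref{prop:submatrix-equiv} already identifies $S_{1,g}$ as a submatrix of the $S$-matrix of $\mathcal{C}^G$ once one lifting is fixed per simple $X\in\mathcal{C}_1$ with $T_g(X)\simeq X$ and per simple $Y\in\mathcal{C}_g$, so the remaining task is purely to verify that $[I_{1,n}]$ and $[I_g]$ provide exactly one such lifting per index.

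For the rows, I would invoke the dichotomy already recorded in the excerpt: a simple object of $(\mathcal{C}^G)_1$ either has trivial $\hat{G}$-stabilizer, in which case it is one of the $\#G$ liftings of a simple $X\in\mathcal{C}_1$ with $T_g(X)\simeq X$ and these liftings form a free $\hat{G}$-orbit, or else it has non-simple image under the forgetful functor and does not contribute a row to $S_{1,g}$. Choosing one representative per free $\hat{G}$-orbit, namely the set $[I_{1,n}]$, therefore amounts precisely to choosing one lifting per admissible row object, and the equivariant structure $u_g$ of the chosen representative plays the role of the isomorphism $\gamma$ appearing in Definition \ref{def:crossed-S-mat}; this is exactly how the braiding in $\mathcal{C}^G$ is computed in terms of the braiding in $\mathcal{C}$.

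For the columns, Lemma \ref{lem:iso-Y-gY} guarantees that every simple $Y\in\mathcal{C}_g$ satisfies $T_g(Y)\simeq Y$, so the same analysis produces a free $\hat{G}$-torsor of liftings for each such $Y$; Proposition \ref{prop:grading-equiv} confirms that these liftings lie in the correct component $(\mathcal{C}^G)_g$. The set $[I_g]$ selects one representative per orbit, and Remark \ref{rk:change-iso-Y} ensures that the crossed $S$-matrix value is insensitive to this choice. Feeding both identifications into Proposition \ref{prop:submatrix-equiv} immediately yields the claim. The main point that must be checked carefully is that the $\hat{G}$-torsor action on liftings matches the scalar ambiguity of Remark \ref{rk:change-iso-X}, so that passing to orbit representatives is well-defined and does not accidentally rescale rows of the submatrix.
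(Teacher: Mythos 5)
Your argument is correct and follows exactly the route the paper intends: the paper gives no separate proof, presenting the statement as the immediate explicitation of Proposition \ref{prop:submatrix-equiv} via the preceding discussion of $\hat{G}$-orbits of liftings, Lemma \ref{lem:iso-Y-gY}, Proposition \ref{prop:grading-equiv}, and Remarks \ref{rk:change-iso-X} and \ref{rk:change-iso-Y}, which is precisely what you assemble.
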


Similarly, the category $\mathcal{C}^G$ is equipped with a twist,
which does not depend on the chosen lifting of $X\in\mathcal{C}_1$ and
depends of the chosen lifting of $Y\in\mathcal{C}_g$, choosing another
lifting multiplies the twist of $Y$ by a root of unity of order $\# G$.


\section{Categories containing $\Rep(A,z)$ with $A$ cyclic}
\label{sec:categ-cont-rep-g}

As in \cite{deshpande}, we have reduced the computation of the crossed
$S$-matrix to a computation in a pivotal braided fusion category
containing the category of representations of a finite cyclic group. We will
state some properties of this matrix, and work in the slightly
extended setting where the braided fusion category contains the
symmetric category $\Rep(A,z)$, for $A$ finite, cyclic and $z\in A$ is of square $1$.

\boitegrise{\textbf{Hypothesis.} \emph{We fix $\mathcal{C}$ a braided pivotal fusion category over $\Bbbk$ such that $\Rep(A,z)\subset \mathcal{C}$ as a braided pivotal subcategory.}}{0.8\textwidth}

Recall that we endow the category $\mathcal{C}$ with a grading by the group $A$, and we denote the degree of a homogeneous object $X$ by $d(X)$. We start with a useful immediate lemma.

\begin{lemma}
  \label{lem:tensorisation-rep-G}
  Let $\mathcal{C}$ be a braided pivotal fusion category containing $\Rep(A,z)$ with $A$ cyclic and $z\in A$ of square $1$. Denote by $X_\alpha$ the simple object in $\mathcal{C}$ corresponding to the representation $\alpha\in\hat{A}$. 
  \begin{enumerate}
  \item For all simple objects $X$ and $Y$ in $\mathcal{C}$ and $\alpha\in\hat{A}$, we have $S_{X_\alpha\otimes X,Y}=\alpha(z)\alpha(d(Y))S_{X,Y}$.
  \item For any twist $\tilde{\theta}$ on $\mathcal{C}$, any simple object $X$ and any $\alpha\in\hat{A}$, we have $\tilde{\theta}_{X_\alpha\otimes X}=\alpha(d(X))\tilde{\theta}_{X_\alpha}\tilde{\theta}_X$.
  \end{enumerate}
\end{lemma}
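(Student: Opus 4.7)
Both parts rest on the same characterisation of the $A$-grading on $\mathcal{C}$: a homogeneous object $Z$ has degree $d(Z)\in A$ exactly when the double braiding with any $X_\alpha$ satisfies $c_{Z,X_\alpha}\circ c_{X_\alpha,Z}=\alpha(d(Z))\,\id$, in direct analogy with Proposition \ref{prop:grading-equiv}. This single identity is the lever I would pull for both statements.

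For part (2), I would simply apply the twist axiom $\tilde\theta_{U\otimes V}=(\tilde\theta_U\otimes\tilde\theta_V)\circ c_{V,U}\circ c_{U,V}$ with $U=X_\alpha$ and $V=X$. The double braiding $c_{X,X_\alpha}\circ c_{X_\alpha,X}$ collapses to the scalar $\alpha(d(X))$, and the simplicity of both $X_\alpha$ and $X$ makes $\tilde\theta_{X_\alpha}$ and $\tilde\theta_X$ scalars, so the identity drops out immediately.

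For part (1), the plan is to expand $c_{Y,X_\alpha\otimes X}\circ c_{X_\alpha\otimes X,Y}$ using the two hexagon axioms and regroup the composition as
\[
  (\id_{X_\alpha}\otimes c_{Y,X})\circ\bigl((c_{Y,X_\alpha}\circ c_{X_\alpha,Y})\otimes\id_X\bigr)\circ(\id_{X_\alpha}\otimes c_{X,Y}).
\]
The middle piece collapses to $\alpha(d(Y))\,\id$ by the grading relation, so the full double braiding becomes $\alpha(d(Y))\cdot\id_{X_\alpha}\otimes(c_{Y,X}\circ c_{X,Y})$. Applying the right quantum trace, which is multiplicative over tensor products of endomorphisms in a pivotal fusion category, I would obtain $S_{X_\alpha\otimes X,Y}=\alpha(d(Y))\dim^R(X_\alpha)\,S_{X,Y}$.

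The only non-formal input is the identification $\dim^R(X_\alpha)=\alpha(z)$, which is precisely where $z$ enters the formula. This is the defining property of the pivotal structure on $\Rep(A,z)$: the right quantum dimension of the simple object $X_\alpha$ is the sign $\alpha(z)\in\{\pm 1\}$, and it transfers to $\mathcal{C}$ since the inclusion $\Rep(A,z)\hookrightarrow\mathcal{C}$ is assumed to respect the pivotal structure. I do not foresee any genuine obstacle; the only bookkeeping concern is matching the hexagon and quantum-trace conventions fixed in Section \ref{sec:notations}.
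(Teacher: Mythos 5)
Your argument is correct, and it is exactly the routine verification that the paper leaves implicit by calling the lemma ``immediate'': part (2) is the balancing axiom combined with the grading characterisation $c_{X,X_\alpha}\circ c_{X_\alpha,X}=\alpha(d(X))\,\id$, and part (1) is the hexagon regrouping of the double braiding plus multiplicativity of the right quantum trace together with $\dim^R(X_\alpha)=\alpha(z)$, which indeed comes from $\Rep(A,z)\subset\mathcal{C}$ being a braided \emph{pivotal} subcategory. No gaps.
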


\subsection{Non-degeneracy and square of the crossed $S$-matrix}
\label{sec:non-deg-square}

Similarly to the case of a pre-modular category, the crossed $S$-matrix will have interesting proprieties if the category $\mathcal{C}$ is non-degenerate.

\begin{remark}
  \label{rk:non-degeneracy-equiv}
  If we start from a braided $G$-crossed category, its equivariantization is non-degenerate if and only if the grading is faithful and the degree $1$ component is non-degenerate \cite[Remark 8.24.4]{egno}.
\end{remark}

\boitegrise{\textbf{Hypothesis.} \emph{We now suppose that the category $\mathcal{C}$ is non-degenerate.}}{0.8\textwidth}

\begin{lemma}
  \label{lem:faithful-grading}
  Let $\mathcal{C}$ be a non-degenerate braided pivotal fusion category containing $\Rep(A,z)$ with $A$ cyclic. Then the grading by $\hat{A}$ is faithful and the symmetric center of $\mathcal{C}_1$ is $\Rep(A,z)$.
\end{lemma}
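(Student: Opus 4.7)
The plan is to identify $\mathcal{C}_1$ with the Müger centralizer of $\Rep(A,z)$ in $\mathcal{C}$ and then combine the two standard consequences of non-degeneracy---Müger's dimension formula and the double-centralizer equality---with the general fact that all nonempty components of a fusion grading have the same dimension.

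First I would note that, by the very definition of the grading, $X \in \mathcal{C}_1$ if and only if $c_{X,X_\alpha} \circ c_{X_\alpha,X} = \id$ for every $\alpha \in \hat{A}$. Since the $X_\alpha$ generate $\Rep(A,z)$ and the double braiding is multiplicative in each variable (by the hexagon axioms), this is equivalent to $X$ centralizing all of $\Rep(A,z)$. Hence $\mathcal{C}_1 = \Rep(A,z)'$, where the prime denotes the Müger centralizer in $\mathcal{C}$. Non-degeneracy of $\mathcal{C}$ together with the Frobenius--Perron formulation of Müger's centralizer theorem (EGNO, Chapter 8) then yields both the dimension formula $\mathrm{FPdim}(\mathcal{C}_1) = \mathrm{FPdim}(\mathcal{C})/|A|$ and the double-centralizer equality $(\mathcal{C}_1)' = \Rep(A,z)'' = \Rep(A,z)$.

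For the faithfulness of the grading I would use the standard fact that in any grading of a fusion category each nonzero component $\mathcal{C}_h$ is an invertible $\mathcal{C}_1$-bimodule category, and so $\mathrm{FPdim}(\mathcal{C}_h) = \mathrm{FPdim}(\mathcal{C}_1)$. Summing over the support gives $\mathrm{FPdim}(\mathcal{C}) = |\mathrm{supp}| \cdot \mathrm{FPdim}(\mathcal{C}_1)$, which combined with the formula above forces $|\mathrm{supp}|=|A|$. For the symmetric center, since $\Rep(A,z)$ is symmetric it is self-centralizing, so $\Rep(A,z) \subseteq \Rep(A,z)' = \mathcal{C}_1$. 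Moreover, for any braided fusion subcategory $\mathcal{D} \subset \mathcal{C}$ the braiding on $\mathcal{D}$ is by hypothesis inherited from $\mathcal{C}$, so the symmetric center of $\mathcal{D}$ coincides with $\mathcal{D} \cap \mathcal{D}'$, the centralizer being taken in $\mathcal{C}$. Applied to $\mathcal{D} = \mathcal{C}_1$, this gives $Z_2(\mathcal{C}_1) = \mathcal{C}_1 \cap \Rep(A,z) = \Rep(A,z)$.

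The only real subtlety is that the paper works in the possibly non-spherical pivotal setting, so one should invoke the Frobenius--Perron dimension version of Müger's centralizer theorem, which holds for any non-degenerate braided fusion category without requiring a spherical or ribbon structure; apart from this bookkeeping there is no genuine obstacle.
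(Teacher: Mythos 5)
Your proof is correct. It runs through the same Müger-centralizer machinery as the paper but packages it differently: the paper applies the dimension formula of \cite[Theorem 8.21.1]{egno} to the subcategory $\mathcal{D}=\mathcal{C}_1$ (non-degeneracy entering only through $\dim(\mathcal{C}_1\cap\mathcal{Z}_{\mathrm{sym}}(\mathcal{C}))=1$), and then runs a sandwich argument: $\Rep(A,z)\subseteq\mathcal{Z}_{\mathcal{C}}(\mathcal{C}_1)$ gives $\#A\leq\dim(\mathcal{Z}_{\mathcal{C}}(\mathcal{C}_1))$, total positivity of categorical dimensions (via an embedding into $\mathbb{C}$) and the equality $\dim(\mathcal{C}_a)=\dim(\mathcal{C}_1)$ on the support give the upper bound $\#A$, forcing faithfulness and $\mathcal{Z}_{\mathcal{C}}(\mathcal{C}_1)=\Rep(A,z)$, whence the symmetric center. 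You instead first identify $\mathcal{C}_1$ with the centralizer $\Rep(A,z)'$ (immediate from how the grading is defined) and then quote the non-degenerate form of the centralizer theorem applied to $\mathcal{D}=\Rep(A,z)$: the double-centralizer equality hands you $(\mathcal{C}_1)'=\Rep(A,z)$, hence the symmetric center, with no inequality or positivity argument, and the Frobenius--Perron multiplicativity $\mathrm{FPdim}(\mathcal{C}_1)=\mathrm{FPdim}(\mathcal{C})/\#A$ reduces faithfulness to the same equal-dimension-of-graded-components fact that the paper also invokes. The trade-off is mild: you lean on the stronger packaged statement (double centralizer plus $\mathrm{FPdim}$ multiplicativity for non-degenerate braided fusion categories, which indeed needs no spherical or ribbon structure, so your final remark is apt), while the paper gets by with the weaker dimension identity at the cost of the embedding/total-positivity step; your route also makes the equality $(\mathcal{C}_1)'=\Rep(A,z)$, which the paper obtains only at the end by dimension count, the starting point.
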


\begin{proof}
  We denote by $\mathcal{Z}_{\mathcal{C}}(\mathcal{C}_1)$ the centralizer of $\mathcal{C}_1$ in $\mathcal{C}$ and by $\mathcal{Z}_{\mathrm{sym}}(\mathcal{C}_1)$ the symmetric center of $\mathcal{C}_1$. By \cite[Theorem 8.21.1]{egno}, we have
\[
  \dim(\mathcal{C}_1)\dim(\mathcal{Z}_{\mathcal{C}}(\mathcal{C}_1)) = \dim(\mathcal{C})\dim(\mathcal{C}_1\cap\mathcal{Z}_{\mathrm{sym}}(\mathcal{C})).
\]

Since $\mathcal{C}$ is non-degenerate, we have $\dim(\mathcal{C}_1\cap\mathcal{Z}_{\mathrm{sym}}(\mathcal{C}))=1$. But, by definition of $\mathcal{C}_1$, $\Rep(G,z)$ is in the centralizer $\mathcal{Z}_{\mathcal{C}}(\mathcal{C}_1)$. All the categorical dimensions being totally positive numbers, we choose an embedding $\Bbbk_{\mathrm{alg}}\rightarrow\mathbb{C}$, and we have
\[
  \# A \leq \dim(\mathcal{Z}_{\mathcal{C}}(\mathcal{C}_1)) \leq \frac{\dim(\mathcal{C})}{\dim(\mathcal{C}_1)}.
\]

Similar to \cite{egno}, one can prove that for any $g\in G$, $\dim(\mathcal{C}_g)=\dim(\mathcal{C}_1)$ and hence
\[
   \# A \leq \dim(\mathcal{Z}_{\mathcal{C}}(\mathcal{C}_1)) \leq \#A.
\]
Therefore the grading by $A$ is faithful and $\mathcal{Z}_{\mathcal{C}}(\mathcal{C}_1)=\Rep(A,z)$.
\end{proof}

The pivotal structure is not spherical, and there exists an involution $\bar{\phantom{a}}$ on the set $\Irr(\mathcal{C})$: for any simple object $X$, there exists, up to isomorphism, a unique simple object $\bar{X}$ such that for every simple object $Y$,
\[
  \frac{S_{X,Y^*}}{\dim^R(X)}=\frac{S_{\bar{X},Y}}{\dim^R(\bar{X})}.
\]

See \cite{lacabanne-slightly} for more details. It is proven that $\bar{X}\simeq X^*\otimes\bar{\mathbf{1}}$ and that $S_{\bar{X},Y}=S_{X,\bar{Y}}$. We denote by $s_X^R$ the character of the Grothendieck group of $\mathcal{C}$ satisfying $\dim^R(X)s_X^R(Y)= S_{X,Y}$ for all simple objects $Y$. The definition of $\bar{\phantom{a}}$ translates into the equality $s_{\bar{X}}^R(Y)=s_{X}^R(Y^*)$ for any simple object $Y$.

\begin{lemma}
  \label{lem:1bar-trivial-degree}
  The element $\bar{\mathbf{1}}$ is in $\mathcal{C}_1$. Therefore the involution $\bar{\phantom{a}}$ is a bijection between $\mathcal{C}_a$ and $\mathcal{C}_{a^{-1}}$ for any $a\in A$.
\end{lemma}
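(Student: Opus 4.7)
The plan is to pin down the $A$-degree of $\bar{\mathbf{1}}$ by computing the double braiding of $\bar{\mathbf{1}}$ with each invertible object $X_\alpha$, $\alpha\in\hat{A}$, in two different ways and comparing. Since $\bar{\mathbf{1}}$ is simple (being the image of $\mathbf{1}$ under the involution on $\Irr(\mathcal{C})$), it is homogeneous in the $A$-grading, so it suffices to show $\alpha(d(\bar{\mathbf{1}}))=1$ for every $\alpha\in\hat{A}$.

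First I would extract $S_{\bar{\mathbf{1}},X_\alpha}$ from the defining relation $s_{\bar{X}}^R(Y)=s_X^R(Y^*)$. Taking $X=\mathbf{1}$ gives $s_{\bar{\mathbf{1}}}^R(Y)=\dim^R(Y^*)$, hence
\[
  S_{\bar{\mathbf{1}},X_\alpha}=\dim^R(\bar{\mathbf{1}})\dim^R(X_\alpha^*).
\]
On the other hand, $X_\alpha$ is invertible, so $\bar{\mathbf{1}}\otimes X_\alpha$ is simple and the double braiding $c_{X_\alpha,\bar{\mathbf{1}}}\circ c_{\bar{\mathbf{1}},X_\alpha}$ acts by the scalar $\alpha(d(\bar{\mathbf{1}}))$ by the very definition of the $A$-grading recalled in the previous section. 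Taking the right quantum trace of this scalar endomorphism yields
\[
  S_{\bar{\mathbf{1}},X_\alpha}=\alpha(d(\bar{\mathbf{1}}))\dim^R(\bar{\mathbf{1}})\dim^R(X_\alpha).
\]

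Comparing the two expressions and using that $\Rep(A,z)\subset\mathcal{C}$ as a pivotal subcategory (so $\dim^R(X_\alpha)=\alpha(z)$ and $\dim^R(X_\alpha^*)=\alpha(z)^{-1}$) one obtains
\[
  \alpha(d(\bar{\mathbf{1}}))=\alpha(z)^{-2}=1,
\]
where the last equality uses \emph{precisely} the hypothesis $z^2=1$. As $\hat{A}$ separates the points of the finite abelian group $A$, this forces $d(\bar{\mathbf{1}})=1$, i.e.\ $\bar{\mathbf{1}}\in\mathcal{C}_1$.

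For the second assertion, the formula $\bar{X}\simeq X^*\otimes\bar{\mathbf{1}}$ recalled above, combined with the fact that the grading is a group homomorphism, gives $d(\bar{X})=d(X^*)d(\bar{\mathbf{1}})=d(X)^{-1}$. So $\bar{\phantom{a}}$ restricts to a map $\Irr(\mathcal{C}_a)\to\Irr(\mathcal{C}_{a^{-1}})$, and being an involution on $\Irr(\mathcal{C})$ it is automatically a bijection. The only delicate point is really the careful use of the hypothesis $z^2=1$ to kill the factor $\alpha(z)^{-2}$; without it, $\bar{\mathbf{1}}$ would land in a nontrivial component, and the whole subsequent analysis of the crossed $S$-matrix would become muddled.
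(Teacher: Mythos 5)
Your proof is correct and follows essentially the same route as the paper: both compute $S_{\bar{\mathbf{1}},X_\alpha}$ from the defining relation of the involution $\bar{\phantom{a}}$, use $\dim^R(X_\alpha)=\alpha(z)=\pm1$ (i.e.\ $z^2=1$) to identify it with $\dim^R(\bar{\mathbf{1}})\dim^R(X_\alpha)$, and conclude that the double braiding with every $X_\alpha$ is trivial, so $\bar{\mathbf{1}}\in\mathcal{C}_1$, with the second assertion following from $\bar{X}\simeq X^*\otimes\bar{\mathbf{1}}$. Your version merely makes the scalar $\alpha(d(\bar{\mathbf{1}}))$ and the trace comparison explicit, which the paper leaves implicit.
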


\begin{proof}
  Since $X_\alpha$ is of dimension $\alpha(z)=\pm 1$, the dimension of $X_\alpha$ and its dual are equal and we have:
\[
  S_{X_\alpha,\bar{\mathbf{1}}}=\dim^R(\bar{\mathbf{1}})\frac{S_{X_\alpha^*,\mathbf{1}}}{\dim^R(\mathbf{1})}=\dim^R(X_\alpha)\dim^R(\bar{\mathbf{1}}).
\]
Therefore $c_{\bar{\mathbf{1}},X_\alpha}\circ c_{X_\alpha,\bar{\mathbf{1}}}=\id_{X_\alpha,\bar{\mathbf{1}}}$ and $\bar{\mathbf{1}}$ is of degree $1$.
\end{proof}

\begin{lemma}
  \label{lem:fixed-points-trivial}
  Let $\alpha\in\hat{A}$ and $X$ be an object of degree $a$. If $X\otimes X_\alpha\simeq X$ then $\alpha(a)=1$.
\end{lemma}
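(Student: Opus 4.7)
The plan is to apply the twist formula of Lemma~\ref{lem:tensorisation-rep-G}~(2) to the isomorphism $X\otimes X_\alpha\simeq X$ for the twist $\tilde\theta$ coming from the pivotal structure of $\mathcal{C}$, and then to observe that $\tilde\theta_{X_\alpha}=1$ because $\Rep(A,z)$ is a symmetric pivotal subcategory.

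First I would note that since $X$ is simple and $X_\alpha$ is invertible, $X_\alpha\otimes X$ is simple and hence the twist acts on it by a scalar, necessarily the same scalar as on the isomorphic simple object $X$. Thus $\tilde\theta_{X_\alpha\otimes X}=\tilde\theta_X$, and substituting this into Lemma~\ref{lem:tensorisation-rep-G}~(2) would give
\[
\tilde\theta_X\;=\;\tilde\theta_{X_\alpha\otimes X}\;=\;\alpha(d(X))\,\tilde\theta_{X_\alpha}\,\tilde\theta_X\;=\;\alpha(a)\,\tilde\theta_{X_\alpha}\,\tilde\theta_X.
\]
Cancelling the non-zero scalar $\tilde\theta_X$ would yield $\alpha(a)\,\tilde\theta_{X_\alpha}=1$.

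The main step that will need care is the verification that $\tilde\theta_{X_\alpha}=1$. Since $\Rep(A,z)\subset \mathcal{C}$ is a \emph{symmetric} braided pivotal subcategory, the identity $c_{Y,Z}\circ c_{Z,Y}=\id$ there forces the Drinfeld morphism $u$ to be monoidal on this subcategory (i.e.\ $u_{Y\otimes Z}=u_Y\otimes u_Z$), and hence to be itself a pivotal structure on $\Rep(A,z)$. A short direct computation along the definition of $u$ will give $u_{X_\alpha}=\alpha(z)$, while $a_{X_\alpha}=\dim^R(X_\alpha)=\alpha(z)$ has already been used in the proof of Lemma~\ref{lem:1bar-trivial-degree}. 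The relation $a=u\tilde\theta$ then forces $\tilde\theta_{X_\alpha}=1$, and together with the previous identity gives $\alpha(a)=1$. This last identification is the only substantive part; once it is granted, the conclusion is a one-line consequence of Lemma~\ref{lem:tensorisation-rep-G}~(2).
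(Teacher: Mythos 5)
Your proof is correct and follows essentially the same route as the paper, whose own argument is precisely "apply Lemma~\ref{lem:tensorisation-rep-G}(2) to the twist $\theta$ of the pivotal structure"; you simply make explicit the two points the paper leaves implicit, namely that isomorphic simple objects have equal twist scalars and that $\theta_{X_\alpha}=1$ (via $u_{X_\alpha}=\alpha(z)=a_{X_\alpha}$ in the symmetric subcategory $\Rep(A,z)$), both of which are verified correctly. Note only that, like the paper, your argument uses that $X$ is simple, which is the intended reading of the statement.
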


\begin{proof}
  This follows from Lemma \ref{lem:tensorisation-rep-G} using the twist $\theta$ associated with the pivotal structure and the braiding.
\end{proof}

Now, we fix $a\in A$ such that $a$ generates the cyclic group $A$. We denote by $I_1$ the set of isomorphism classes of simple objects of degree $1$ and by $I_{1,n}$ the subset of elements of $I_1$ on which $\hat{A}$ acts with trivial stabilizers. We choose a set of representatives $[I_{1,n}]$ of $I_{1,n}$ under the action of $\hat{A}$.

Similarly, denote by $I_{a^{\pm 1}}$ the set of isomorphism classes of simple objects of degree $a^{\pm 1}$. By Lemma \ref{lem:faithful-grading}, the group $\hat{A}$ acts by tensorization on $I_{a^{\pm 1}}$ with trivial stabilizers. We choose a set of representatives $[I_{a}]$ of $I_{a}$ under the action of $\hat{A}$. We set $[I_{a^{-1}}] = \{\bar{X}\ \vert\ X\in[I_{a}]\}$. This is a set of representatives of $I_{a^{-1}}$ under the action of $\hat{A}$.

\begin{remark}
  \label{rk:compatible-choices}
  \begin{enumerate}
  \item The choice of $[I_{1,n}]$ is not necessarily compatible with the involution $\bar{\phantom{a}}$, but for all $X\in[I_{1,n}]$ there exist a unique $Y\in [I_{1,n}]$ and a unique $\alpha\in\hat{A}$ such that $\bar{X}\simeq X_\alpha\otimes Y$.
  \item If $\# A=2$, then $a=a^{-1}$ and we may and will choose $[I_{a}]$ such that the two sets $[I_{a}]$ and $[I_{a^{-1}}]$ are equal. Indeed, for any $X\in I_a$ and $\alpha\in\hat{A}$, if $\bar{X}\simeq X_\alpha\otimes X$ then $\alpha=1$. This follows from the value of the twist: $\theta_{\bar{X}}=\theta_{X}$ and $\theta_{X_\alpha\otimes X}=\alpha(a)\theta_{X}$.  
  \end{enumerate}
\end{remark}

If $X\in I_1\setminus I_{1,n}$ then for any $Y\in I_{a^{\pm 1}}$ we have $S_{X,Y} = 0$. Indeed, let $\alpha\in\hat{A}\{1\}$ such that $X_\alpha\otimes X\simeq X$. Since $\dim^R(X_\alpha)=\alpha(z)$ and $\dim^R(X)\neq 0$, we must have $\alpha(z)=1$ and
\[
  S_{X,Y} = S_{X_\alpha\otimes X,Y} = \alpha(z)\alpha(g^{\pm 1}) S_{X,Y} = \alpha(a^{\pm 1}) S_{X,Y}.
\]
As $\alpha\neq 1$ and $a$ generates $A$, we have $\alpha(a^{\pm 1})\neq 1$ and $S_{X,Y}=0$.

We then consider the following submatrices of the $S$-matrix of $\mathcal{C}$:
\[
  S_{1,a^{\pm 1}}=(S_{X,Y})_{X\in[I_{1,n}],Y\in [I_{a^{\pm 1}}]},\quad S_{a^{\pm 1},1}={}^t\!S_{1,a^{\pm 1}} \quad \text{and} \quad S_{a^{\pm 1},g} = (S_{X,Y})_{X\in [I_{a^{\pm 1}}],Y\in [I_g]}.
\]
We also define two square monomial matrices $P=(P_{X,Y})_{X,Y \in [I_{1,n}]}$ and $Q=(Q_{W,Z})_{W\in [I_{a^{-1}}], y\in [I_a]}$ by:
\[
  P_{X,Y} = \sum_{\alpha\in\hat{A}} \alpha(z)\alpha(g^{-1})\delta_{\bar{Y}\simeq X_{\alpha}\otimes X}\quad\text{and}\quad Q_{W,Z} = \delta_{\bar{W}\simeq Z},
\]
for any $X,Y\in [I_{1,n}]$, $W\in [I_{a^{-1}}]$ and $Z\in [I_a]$.

\begin{theoreme}
  \label{thm:S1gSg1}
  Let $\mathcal{C}$ be a non-degenerate braided pivotal fusion category containing $\Rep(A,z)$. Then
\[
  S_{1,a}S_{a,1}=\frac{\dim(\mathcal{C}_1)}{\# A}\dim^R(\bar{\mathbf{1}})P \quad\text{and}\quad S_{a^{-1},1}S_{1,a}=S_{a^{-1},a}S_{a,a}=\frac{\dim(\mathcal{C}_1)}{\# A}\dim^R(\bar{\mathbf{1}})Q.
\]

In particular, the matrices $S_{1,a^{\pm1}}$ and $S_{a^{\pm 1},1}$ are square matrices and $\#I_{1,n} = \#I_a$.
\end{theoreme}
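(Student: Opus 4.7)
The plan is to leverage the non-spherical Verlinde-type orthogonality of the $S$-matrix of $\mathcal{C}$ (proven in \cite{lacabanne-slightly}), namely
\[
  \sum_{Z\in\Irr(\mathcal{C})} S_{X,Z}S_{Z,Y} \;=\; \dim(\mathcal{C})\dim^R(\bar{\mathbf{1}})\,\delta_{X,\bar{Y}},
\]
and then to extract each block product as a restricted sum over a single degree component of the $\hat{A}$-grading, using character orthogonality on $\hat{A}$ together with Lemma \ref{lem:tensorisation-rep-G} and the relation $\dim(\mathcal{C}) = \# A\cdot\dim(\mathcal{C}_1)$ established in the proof of Lemma \ref{lem:faithful-grading}.

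For the first identity, $(S_{1,a}S_{a,1})_{X,Y}=\sum_{Z\in[I_a]}S_{X,Z}S_{Z,Y}$ for $X,Y\in[I_{1,n}]$. Since $\hat{A}$ acts freely on $I_a$ by tensorisation, and since for $X,Y$ of degree $1$ one has $S_{X,X_\alpha\otimes Z}S_{Y,X_\alpha\otimes Z} = \alpha(z)^2 S_{X,Z}S_{Y,Z} = S_{X,Z}S_{Y,Z}$ (using $z^2=1$), the sum over $[I_a]$ equals $\frac{1}{\# A}$ times the sum over $I_a$. I would then isolate the component $I_a$ inside all of $\Irr(\mathcal{C})$ through the orthogonality relation $\frac{1}{\# A}\sum_{\alpha\in\hat{A}}\alpha(d(Z)a^{-1}) = \delta_{d(Z),a}$. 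Using Lemma \ref{lem:tensorisation-rep-G} in the form $\alpha(d(Z))S_{X,Z} = \alpha(z)S_{X_\alpha\otimes X,Z}$ turns each resulting sum into a Verlinde sum:
\[
  \sum_{Z\in\Irr(\mathcal{C})} S_{X_\alpha\otimes X,Z}S_{Z,Y} \;=\; \dim(\mathcal{C})\dim^R(\bar{\mathbf{1}})\,\delta_{X_\alpha\otimes X,\bar{Y}}.
\]
Collecting phases $\alpha(a^{-1})\alpha(z)$ one recognizes exactly $P_{X,Y}$, and the prefactor simplifies to $\dim(\mathcal{C}_1)\dim^R(\bar{\mathbf{1}})/\# A$.

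For $S_{a^{-1},1}S_{1,a}$ and $S_{a^{-1},a}S_{a,a}$, I would proceed the same way with rows in $[I_{a^{-1}}]$ and columns in $[I_a]$. Crucially, the partial sum over $[I_{1,n}]$ may be replaced by a sum over all of $I_1$ because the excluded simple objects $X\in I_1\setminus I_{1,n}$ satisfy $S_{X,-}\equiv 0$ on $I_{a^{\pm 1}}$, as observed immediately before the theorem; for the second product the sum over $[I_a]$ similarly extends to $I_a$ since $\hat{A}$ acts freely there. Applying Lemma \ref{lem:tensorisation-rep-G} on the left factor to produce an $\alpha(d(X))$ factor and invoking the Verlinde formula, the remaining Kronecker delta forces $X_\alpha\otimes W\simeq\bar{Z}$, with $W,\bar{Z}\in[I_{a^{-1}}]$; by the free action of $\hat{A}$ on $I_{a^{-1}}$ and our choice of representatives this forces $\alpha=1$ and $W=\bar{Z}$, producing exactly $Q_{W,Z}$ with the same global prefactor $\dim(\mathcal{C}_1)\dim^R(\bar{\mathbf{1}})/\# A$.

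Finally, the ``in particular'' assertion is immediate: $P$ is monomial (by Remark \ref{rk:compatible-choices}(1)) and $Q$ is a permutation matrix, both invertible; since $\dim(\mathcal{C}_1)\dim^R(\bar{\mathbf{1}})\neq 0$, the products are invertible, forcing the rectangular matrices $S_{1,a^{\pm 1}}$ and $S_{a^{\pm 1},1}$ to be square, hence $\#[I_{1,n}]=\#[I_a]$, and by freeness of the $\hat{A}$-action also $\# I_{1,n} = \# I_a$. The main delicate step is the bookkeeping in the second paragraph: carefully combining the two appearances of Lemma \ref{lem:tensorisation-rep-G} (once to cancel the $\alpha(z)^2$ arising from passing to a fundamental domain, once to produce the $\alpha(d(X))$ required for the character-orthogonality trick) and then correctly matching the Kronecker deltas with $P$ and $Q$ via Remark \ref{rk:compatible-choices}.
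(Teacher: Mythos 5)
Your proof is correct, but it follows a genuinely different route from the paper's. You lift everything to the whole non-degenerate category: you cut out the graded component by averaging over $\hat{A}$ (via $\frac{1}{\#A}\sum_{\alpha}\alpha(d(Z)a^{-1})=\delta_{d(Z),a}$), move the resulting factor $\alpha(d(Z))$ onto the other index using Lemma \ref{lem:tensorisation-rep-G}, and then invoke the global orthogonality $\sum_{Z\in\Irr(\mathcal{C})}S_{U,Z}S_{Z,Y}=\dim(\mathcal{C})\dim^R(\bar{\mathbf{1}})\delta_{U,\bar{Y}}$, with $\dim(\mathcal{C})=\#A\dim(\mathcal{C}_1)$ giving the stated prefactor; all three products are then handled by one and the same computation, and the phases $\alpha(a^{-1})\alpha(z)$ reassemble into $P$, while freeness of the $\hat{A}$-action on $I_{a^{\pm 1}}$ and the choice $[I_{a^{-1}}]=\{\bar{X}\,\vert\,X\in[I_a]\}$ reduce the other two products to $Q$. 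The paper never uses this global relation: it stays inside the graded pieces, computes $\sum_{Z\in I_1}S_{Y,Z}S_{Z,X}$ by orthogonality of the characters $s^R$ of $\Gr(\mathcal{C}_1)$ together with an expansion in fusion coefficients (the transparent objects $X_\alpha$ of $\mathcal{C}_1$ then appear on the diagonal), and for $S_{1,a}S_{a,1}$, where the intermediate sum runs only over $I_a$, it needs an extra trick — comparing two factorizations of a product in $\Gr(\mathcal{C})$ — to show that the component-restricted sum vanishes when the characters differ. Your approach buys uniformity and brevity, at the cost of requiring, as an external input, the non-spherical orthogonality with exactly the normalization $\dim(\mathcal{C})\dim^R(\bar{\mathbf{1}})$ and the involution $\bar{\phantom{a}}$; this is indeed available (it is the $S^2$-relation of \cite{lacabanne-slightly}, and can also be derived by running the paper's own character-plus-fusion argument on all of $\mathcal{C}$, whose symmetric center is trivial), so you should make that citation or derivation explicit. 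The paper's approach avoids any global statement but pays for it with the more delicate component-restricted bookkeeping. Your ``in particular'' argument (invertibility of both square products forces the rectangular blocks to be square, hence $\#I_{1,n}=\#I_a$ by freeness) matches the paper's.
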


\begin{proof}
  Let us start by the product $S_{a^{-1},1}S_{1,a}$. Let $X\in [I_a]$ and $Y\in [I_{a^{-1}}]$. As noticed before, for any $Z\in I_1\setminus I_{1,n}$, one have $S_{Y,Z}=0=S_{Z,X}$. Consequently
\[
  \sum_{Z\in I_1} S_{Y,Z}S_{Z,X} = \sum_{Z \in I_{1,n}}S_{Y,Z}S_{Z,X} = \sum_{Z \in [I_{1,n}]}\sum_{\alpha\in\hat{A}}S_{Y,X_\alpha\otimes Z}S_{X_\alpha\otimes Z,X}.
\]
But $S_{Y,X_\alpha\otimes Z} = \alpha(z)\alpha(a^{-1})S_{Y,Z}$ and $S_{X_\alpha\otimes Z,X} = \alpha(z)\alpha(a)S_{Z,Y}$. Therefore
\[
  (S_{a^{-1},1}S_{1,a})_{Y,X} = \frac{1}{\# A}\sum_{Z\in I_1}S_{Y,Z}S_{Z,X}.
\]

If $X$ and $\bar{Y}$ are not isomorphic, then 
\[
  \sum_{Z\in I_1}S_{Y,Z}S_{Z,X} = \dim^R(X)\dim^R(Y)\sum_{Z\in I_1}s_X^R(Z)s_{\bar{Y}}^R(Y^*)=0,
\] 
since the characters $s_X^R$ and $s_{\bar{Y}}^R$ of $\Gr(\mathcal{C}_1)$ are different \cite[Lemma 8.20.9]{egno}.

If $X$ and $\bar{Y}$ are isomorphic, then
\[
  \sum_{Z\in I_1}S_{Y,Z}S_{Z,X} = \sum_{\substack{Z\in I_1\\W\in I_1}}\dim^R(Z)N_{X,Y}^WS_{Z,W}=\sum_{W\in I_1}N_{X,Y}^W\dim^R(W)\sum_{Z\in I_1}s^R_{\mathbf{1}}(Z)s^R_{\bar{W}}(Z^*).
\] 
The sum over $Z$ is non-zero if and only if $\bar{W}\simeq X_\alpha$ for some $\alpha\in\hat{A}$ and
\[
  \sum_{Z\in I_1}S_{Y,Z}S_{Z,X}=\dim(\mathcal{C}_1)\sum_{\alpha\in\hat{A}}\dim^R(\overline{X_\alpha})N_{X,Y}^{X_\alpha}.
\]
Finally, $N_{X,Y}^{\overline{X_\alpha}}=N_{X,Y\otimes \overline{X_\alpha}^*}^{\mathbf{1}}$ and $Y\otimes \overline{X_\alpha}^*\simeq X^*\otimes X_\alpha$ so that $N_{X,Y}^{\overline{X_\alpha}}$ is zero if $\alpha\neq 1$ and is $1$ if $\alpha=1$. This concludes the computation of $S_{a^{-1},1}S_{1,a}$.

Now, we look at the product $S_{1,a}S_{a,1}$. For $X$ and $Y$ two elements of $[I_1]$, we have
\[
  \sum_{Z\in I_a}S_{X,Z}S_{Z,Y}=\# A \sum_{Z\in [I_g]}S_{X,Z}S_{Z,Y}.
\]

Suppose first that for all $\alpha\in\hat{A}$, the objects $\bar{Y}$ and $X_\alpha\otimes X$ are not isomorphic. Then the characters $s_{X}^R$ and $s_{\bar{Y}}^R$ are different and $\sum_{Z\in I_1}s_X^R(Z)s_{\bar{Y}}^R(Z^*) = 0$. By computing in $\Gr(\mathcal{C})$ the product
\[
  \left(\sum_{Z\in I_1}s_X^R(Z^*)[Z]\right)\left(\sum_{Z\in I_a}s_{\bar{Y}}^R(Z^*)[Z]\right),
\]
we obtain that
\[
  \left(\sum_{Z\in I_1}s_{\bar{Y}}(Z)s_X(Z^*)\right)\left(\sum_{Z\in I_a}s_{\bar{Y}}^R(Z^*)[Z]\right)
  =
  \left(\sum_{Z\in I_a}s_{X}(Z)s_{\bar{Y}}(Z^*)\right)\left(\sum_{Z\in I_a}s_{X}(Z^*)[Z]\right).
\]
Since the first term is zero, we obtain that $\sum_{Z\in I_a}s_{X}(Z)s_{\bar{Y}}(Z^*)=0$ and therefore 
\[
  \sum_{Z\in I_a}S_{X,Z}S_{Z,Y}=0.
\]

Suppose now that $\bar{Y}$ and $X_\alpha\otimes X$ are isomorphic for a certain $\alpha\in\hat{A}$. Similarly to the computation of $S_{a^{-1},1}S_{1,a}$, we have:
\[
  \sum_{Z\in I_a}S_{X,Z}S_{Z,Y} = \sum_{\substack{Z\in I_a\\W\in I_1}}N_{X,Y}^W\dim^R(Z)S_{Z,W} = \sum_{W\in I_1}N_{X,Y}^W\dim(W)\sum_{Z\in I_a}s_{\mathbf{1}}^R(Z)s_{\bar{W}}^R(Z^*),
\]
and the sum over $I_a$ is non-zero if and only if $\bar{W}\simeq X_\beta$ for some $\beta\in\hat{A}$. Therefore
\[
  \sum_{Z\in I_a}S_{X,Z}S_{Z,Y} = \dim(\mathcal{C}_a)\sum_{\beta\in\hat{A}}N_{X,Y}^{\overline(X_\beta)}\dim^R(\overline{X_\beta}).
\]
Finally, $N_{X,Y}^{\overline(X_\beta)}=1$ if and only if $\alpha=\beta$ in $\hat{A}$ and is zero otherwise. Hence
\[
  \sum_{Z\in I_a}S_{X,Z}S_{Z,Y} = \dim(\mathcal{C}_1)\dim^R(\bar{\mathbf{1}})\alpha(a^{-1})\alpha(z),
\]
as stated.

The computations for the last product are similar.
\end{proof}

\begin{remark}
  When $\#A=2$, if we have made a choice of $[I_a]$ which is not stable with respect to the involution $\bar{\phantom{a}}$, we have to add signs to the permutation matrix appearing in the product $S_{a,a}S_{a,a}$.
\end{remark}

\begin{corollary}
  \label{cor:twisted-verlinde}
  Under the same hypothesis, for any $X,Y,Z\in [I_{1,n}]$, we have
\[
  \sum_{\alpha\in\hat{A}}\alpha(a)\alpha(z)N_{X,Y}^{X_\alpha\otimes Z} = \frac{\# A}{\dim(\mathcal{C}_1)\dim^R(\bar{\mathbf{1}})}\sum_{W\in[I_a]}\frac{S_{X,W}S_{Y,W}S_{\bar{Z},W}}{\dim^R(W)}.
\]
\end{corollary}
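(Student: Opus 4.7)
The plan is to evaluate each character $s_W^R$ for $W \in [I_a]$ on the Grothendieck ring identity $[X][Y] = \sum_V N_{X,Y}^V [V]$, assemble the resulting equations into a linear system for the coefficients $A_Z := \sum_{\alpha \in \hat{A}} \alpha(a)\alpha(z) N_{X,Y}^{X_\alpha \otimes Z}$, and then invert this system using Theorem \ref{thm:S1gSg1}.

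First I compute $s_W^R(X_\alpha)$ for $W \in [I_a]$: applying Lemma \ref{lem:tensorisation-rep-G} with $X = \mathbf{1}$ gives $S_{X_\alpha, W} = \alpha(z)\alpha(a)\dim^R(W)$, whence $s_W^R(X_\alpha) = \alpha(z)\alpha(a)$. The sum $\sum_V N_{X,Y}^V s_W^R(V)$ is supported on $V \in I_1$, and the vanishing $s_W^R(V) = 0$ for $V \in I_1 \setminus I_{1,n}$ established in the proof of Theorem \ref{thm:S1gSg1} allows me to restrict further to $V \in I_{1,n}$. Since $\hat{A}$ acts freely on $I_{1,n}$, writing each such $V$ uniquely as $X_\alpha \otimes Z$ with $Z \in [I_{1,n}]$ and using multiplicativity of $s_W^R$ turns the identity above into
\[
\sum_{Z \in [I_{1,n}]} A_Z \, s_W^R(Z) = s_W^R(X)\, s_W^R(Y).
\]
Substituting $s_W^R(V) = S_{V,W}/\dim^R(W)$ and clearing denominators gives the matrix equation $S_{a,1} A = B$, where $B_W = S_{X,W} S_{Y,W}/\dim^R(W)$.

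To solve for $A$, I transpose the identity $S_{a^{-1},1} S_{1,a} = \tfrac{\dim(\mathcal{C}_1)}{\# A}\dim^R(\bar{\mathbf{1}}) Q$ from Theorem \ref{thm:S1gSg1} to obtain $S_{a,1} S_{1,a^{-1}} = \tfrac{\dim(\mathcal{C}_1)}{\# A}\dim^R(\bar{\mathbf{1}}) Q^T$. The matrix $Q$ encodes the involution $W \mapsto \bar{W}$ between $[I_a]$ and $[I_{a^{-1}}]$, so a direct check gives $Q Q^T = Q^T Q = I$, hence $(Q^T)^{-1} = Q$. Therefore $S_{a,1}^{-1} = \tfrac{\# A}{\dim(\mathcal{C}_1)\dim^R(\bar{\mathbf{1}})}\, S_{1,a^{-1}} Q$, and expanding this product using $S_{X,\bar{Y}} = S_{\bar{X}, Y}$ together with the choice $[I_{a^{-1}}] = \{\bar{W} : W \in [I_a]\}$ yields $(S_{a,1}^{-1})_{Z,W} = \tfrac{\# A}{\dim(\mathcal{C}_1)\dim^R(\bar{\mathbf{1}})} S_{\bar{Z}, W}$. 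Unfolding $A = S_{a,1}^{-1} B$ entrywise produces exactly the stated formula.

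The only delicate point is keeping track of the indexing sets $[I_a]$ and $[I_{a^{-1}}]$ in the verification of $(Q^T)^{-1} = Q$ and in the identification $(S_{1,a^{-1}} Q)_{Z,W} = S_{\bar{Z}, W}$; everything else is a direct manipulation combining multiplicativity of the characters $s_W^R$ with the inversion formula of Theorem \ref{thm:S1gSg1}.
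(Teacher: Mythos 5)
Your proof is correct and follows essentially the same route as the paper: both rest on expanding $S_{X,W}S_{Y,W}/\dim^R(W)$ via the ring characters $s_W^R$ restricted to $I_{1,n}$ and then invoking the orthogonality relations of Theorem \ref{thm:S1gSg1}. The only cosmetic difference is that you invert $S_{a,1}$ using the relation $S_{a^{-1},1}S_{1,a}=\frac{\dim(\mathcal{C}_1)}{\# A}\dim^R(\bar{\mathbf{1}})Q$ together with $S_{\bar{Z},W}=S_{Z,\bar{W}}$, whereas the paper plugs the fusion expansion directly into the product $S_{1,a}S_{a,1}=\frac{\dim(\mathcal{C}_1)}{\# A}\dim^R(\bar{\mathbf{1}})P$ from the same theorem.
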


\begin{proof}
  Let $\alpha\in\hat{A}$ such that $X_\alpha\otimes \bar{Z}\in[I_{1,n}]$. We then have:
  \begin{align*}
    \sum_{W\in[I_a]}\frac{S_{X,W}S_{Y,W}S_{\bar{Z},W}}{\dim^R(W)} 
    &= \sum_{\substack{W\in[I_a]\\U\in I_1}}N_{X,Y}^US_{U,W}S_{W,\bar{Z}}\\
    &= \sum_{U\in [I_{1,n}]}\alpha(a^{-1})\alpha(z)\left(\sum_{\beta\in\hat{A}}\beta(a)N_{X,Y}^{X\alpha\otimes U}\right)(S_{1,a}S_{a,1})_{U,X_\alpha\otimes \bar{Z}}.
  \end{align*}
  
  But the Theorem \ref{thm:S1gSg1} shows that $(S_{1,a}S_{a,1})_{U,X_\alpha\otimes \bar{Z}}=\frac{\dim(\mathcal{C}_1)}{\# A}\dim^R(\bar{\mathbf{1}})\alpha(a)\alpha(z)\delta_{U,Z}$, which leads to the expected formula.
\end{proof}

\begin{remark}
  \label{rk:fusion-ring}
 The ring $\Bbbk\otimes_{\mathbb{Z}}\Gr(\mathcal{C})/([X_\alpha]-\alpha(a)[\mathbf{1}])$ has a basis labelled by the elements of $[I_{1,n}]$. The numbers $\sum_{\alpha\in\hat{A}}\alpha(a)\alpha(z)N_{X,Y}^{X_\alpha\otimes Z}$ are precisely the structure constants of this ring with respect to this basis, see \cite{deshpande}. These constants lie in a cyclotomic ring, which is the integers if $A$ is of order $2$.
\end{remark}

\subsection{Now enters the twist}
\label{sec:twist-graduation}

The twist $\theta$ plays an important role in the representation of $SL_2(\mathbb{Z})$ afforded by a modular category. In our graded setting, the twist of a simple object $X\in [I_{1,n}]$ does not depend on the choice of the representative, whereas the twist of a simple object $Y\in [I_a]$ depends on the choice of the representative, by a root of unity of order $\# A$. We define the following diagonal matrices:
\[
  T_1=\diag(\theta_X^{-1})_{X\in [I_{1,n}]}\quad\text{and}\quad T_{a^{\pm 1}} = \diag(\theta_Y^{-1})_{Y\in [I_{a^{\pm 1}}]}.
\]
Since for any simple object $X$ we have $\theta_{\bar{X}}=\theta_X$, we have $T_{g^{-1}}Q=QT_g$, where $Q$ is the matrix defined before the Theorem \ref{thm:S1gSg1}. We aim to show some relations satisfied by the matrices $S_{a^{-1},1},S_{1,a},T_1,T_a$ and $T_{a^{-1}}$.

The Gauss sums of a fusion category $\mathcal{C}$ with twist $\tilde{\theta}$ are
\[
  \tau^{\pm}(\mathcal{C},\tilde{\theta}):=\sum_{X\in\Irr(\mathcal{C})}\theta_X^{\pm 1}\lvert X \rvert^2.
\]
If $\mathcal{C}$ is a braided pivotal fusion category, we denote by $\tau^{\pm}(\mathcal{C})$ the Gauss sums with twist $\theta$ associated with the pivotal structure.

\begin{lemma}
  \label{lem:tau+tau-}
  Let $\mathcal{C}$ be a non-degenerate braided pivotal fusion category containing $\Rep(A,z)$ with $A$ cyclic. Then
\[
  \frac{1}{\#A}\tau^{+}(\mathcal{C}_1)\frac{1}{\#A}\tau^-(\mathcal{C}_1) = \frac{1}{\#A}\dim(\mathcal{C}_1).
\]
\end{lemma}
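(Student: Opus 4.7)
The plan is to reduce the claimed identity to the standard Gauss-sum formula $\tau^+(\mathcal{C})\tau^-(\mathcal{C}) = \dim(\mathcal{C})$, which holds in any non-degenerate braided pivotal fusion category. The argument from the proof of Lemma \ref{lem:faithful-grading} yields $\dim(\mathcal{C}_h) = \dim(\mathcal{C}_1)$ for every $h \in A$, hence $\dim(\mathcal{C}) = \#A \cdot \dim(\mathcal{C}_1)$, and the stated identity becomes equivalent to $\tau^{\pm}(\mathcal{C}) = \tau^{\pm}(\mathcal{C}_1)$, which in turn amounts to the vanishing $\tau^{\pm}(\mathcal{C}_h) = 0$ for every $h \in A \setminus \{1\}$.

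To obtain this vanishing I would organize $\Irr(\mathcal{C}_h)$ into orbits under the tensorization action of $\hat{A}$, namely $X \mapsto X_\alpha \otimes X$. Two ingredients are essential. First, $|X_\alpha \otimes X|^2 = |X|^2$, because $\dim^R(X_\alpha) = \dim^L(X_\alpha) = \alpha(z) = \pm 1$ in the symmetric subcategory $\Rep(A,z)$, giving $|X_\alpha|^2 = 1$. Second, Lemma \ref{lem:tensorisation-rep-G}(2) yields $\theta_{X_\alpha \otimes X} = \alpha(h)\, \theta_{X_\alpha}\, \theta_X$ for $X \in \mathcal{C}_h$, and an auxiliary computation shows that the twist on $\Rep(A,z)$ is trivial: the Drinfeld morphism on $X_\alpha$ is the scalar $\alpha(z)\,\id_{X_\alpha}$, so the relation $a = u\theta$ together with $a_{X_\alpha} = \alpha(z)\,\id_{X_\alpha}$ forces $\theta_{X_\alpha} = 1$.

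With these inputs, the orbit sum over the $\hat{A}$-orbit of $X \in \Irr(\mathcal{C}_h)$ with stabilizer $H = \Stab(X)$ reads
\[ \sum_{Y \in \mathrm{orb}(X)} \theta_Y^{\pm 1} |Y|^2 = \theta_X^{\pm 1} |X|^2 \sum_{\alpha \in \hat{A}/H} \alpha(h)^{\pm 1}. \]
Lemma \ref{lem:fixed-points-trivial} ensures $H \subseteq \ker(h)$, so $\alpha \mapsto \alpha(h)$ descends to $\hat{A}/H$ and the resulting character sum equals $\frac{\#A}{|H|}\,\delta_{h,1}$. For $h \neq 1$ every orbit contributes zero, hence $\tau^{\pm}(\mathcal{C}_h) = 0$, which finishes the reduction.

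The main obstacle I anticipate is the auxiliary input $\theta_{X_\alpha} = 1$: it requires analyzing the Drinfeld morphism of $\mathcal{C}$ restricted to the symmetric subcategory $\Rep(A,z)$ and checking that the signs arising from the (super-)symmetric braiding exactly compensate those present in the pivotal structure. Once this technical point is secured, everything else is a clean orthogonality-of-characters computation, and the non-sphericity of $\mathcal{C}$ plays no role since $\Rep(A,z)$ is automatically spherical.
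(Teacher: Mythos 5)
Your argument is correct in outline but follows a genuinely different route from the paper. The paper proves Lemma \ref{lem:tau+tau-} by passing to the de-equivariantization of $\mathcal{C}_1$ by its Tannakian part: for $z=1$ it invokes Bruguières' comparison of Gauss sums and dimensions under modularization \cite{bruguieres}, and for $z\neq 1$ it reduces to a slightly degenerate category and quotes \cite{lacabanne-slightly}. You instead stay inside the ambient non-degenerate category $\mathcal{C}$: you show $\tau^{\pm}(\mathcal{C}_h)=0$ for $h\neq 1$ by an orbit/character cancellation (using $\theta_{X_\alpha}=1$, Lemma \ref{lem:tensorisation-rep-G}, and Lemma \ref{lem:fixed-points-trivial} to control stabilizers), combine this with $\dim(\mathcal{C})=\#A\cdot\dim(\mathcal{C}_1)$ from Lemma \ref{lem:faithful-grading}, and conclude from a global Gauss-sum identity for $\mathcal{C}$. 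The cancellation step is sound and is in fact the same trick the paper uses in Section \ref{sec:degree_0} to prove $\tau^{+}(\mathcal{C})=\tau^{+}(\mathcal{C}_0)$ in the dihedral example; your route also has the advantage of treating $z=1$ and $z\neq 1$ uniformly, with no case distinction and no de-equivariantization.

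The one point you must repair is the input you call ``the standard Gauss-sum formula $\tau^{+}(\mathcal{C})\tau^{-}(\mathcal{C})=\dim(\mathcal{C})$, which holds in any non-degenerate braided pivotal fusion category.'' The textbook statement is for modular, i.e.\ non-degenerate \emph{spherical}, categories; here the pivotal structure is explicitly allowed to be non-spherical, and with $A$ trivial your claimed input \emph{is} Lemma \ref{lem:tau+tau-} itself, so as written the argument is borderline circular. The identity does hold in this non-spherical generality (it is available in the framework of \cite{lacabanne-slightly}, which the paper relies on for exactly this kind of statement), but you need to either cite it there or prove it, rather than declare it standard. Relatedly, your closing remark that ``the non-sphericity of $\mathcal{C}$ plays no role since $\Rep(A,z)$ is automatically spherical'' misidentifies where sphericity matters: it is irrelevant for the twist computation on $\Rep(A,z)$, but it is precisely the possible non-sphericity of the ambient $\mathcal{C}$ that makes the global identity $\tau^{+}\tau^{-}=\dim$ a non-trivial input.
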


\begin{proof}
  If $z=1$, then this follows from the corresponding result for the modularization $\mathcal{D}$ of $\mathcal{C}_1$ since $\frac{1}{\#A}\tau^{\pm}(\mathcal{C}_1)=\tau^{\pm}(\mathcal{D})$ and $\frac{1}{\#A}\dim(\mathcal{C}_1)=\dim(\mathcal{D})$ (see \cite[Proposition 4.4]{bruguieres} for these equalities).

  If $z\neq 1$ then necessarily $\#A$ is even, equal to $2n$. The symmetric center of $\mathcal{C}_1$ is $\Rep(A,z)$ and we consider the de-equivariantization $\mathcal{D}$ of $\mathcal{C}_1$ with respect to the action of $A/\langle z\rangle$. Since $A$ is commutative, we have $\frac{1}{n}\tau^{\pm}(\mathcal{C}_1)=\tau^{\pm}(\mathcal{D})$ and $\frac{1}{n}\dim(\mathcal{C}_1)=\dim(\mathcal{D})$. The category $\mathcal{D}$ is slightly degenerate and the result follows from \cite{lacabanne-slightly}.
\end{proof}

We now give a formula which relates the matrices $S_{1,a}, S_{a^{-1},1},S_{a^{-1},a}, T_1,T_a$ and $T_{a^{-1}}$.

\begin{proposition}
  \label{prop:S-and-T}
  Let $\mathcal{C}$ be a non-degenerate braided pivotal fusion category containing $\Rep(A,z)$ with $A$ cyclic. Then
  \begin{equation}
    S_{a^{-1},1}T_1S_{1,a}=\frac{\tau^{-}(\mathcal{C}_1)}{\# A} T_{a^{-1}}^{-1}S_{a^{-1},a}T_a^{-1}.\label{eq:S-and-T}
  \end{equation}
\end{proposition}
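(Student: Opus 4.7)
The plan is to prove the entrywise version of \eqref{eq:S-and-T}: for $X \in [I_{a^{-1}}]$ and $Y \in [I_a]$, the identity reduces to
\[
  \sum_{Z \in [I_{1,n}]} S_{X,Z}\,\theta_Z^{-1}\,S_{Z,Y}
  = \frac{\tau^-(\mathcal{C}_1)}{\# A}\,\theta_X\,\theta_Y\,S_{X,Y}.
\]
This is a graded analogue of the classical modular identity $STS = \tau^- T^{-1}ST^{-1}$ (with $T=\diag(\theta^{-1})$), and my strategy is to reduce to it in the (partial) de-equivariantization of $\mathcal{C}_1$ by $\Rep(A,z)$, whose resulting category $\mathcal{D}$ is modular when $z = 1$ and slightly degenerate (containing $\sVect$) when $z \neq 1$, by Lemma~\ref{lem:faithful-grading}.

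First I would match the sum over $[I_{1,n}]$ with the analogous sum over $\Irr(\mathcal{D})$. Under de-equivariantization, the simples of $\mathcal{D}$ descending from $I_{1,n}$ are in bijection with $[I_{1,n}]$, while those descending from $I_1 \setminus I_{1,n}$ (which split further) contribute zero to the sum: $S_{X,Z}=0$ for $X \in I_{a^{-1}}$ and $Z \in I_1\setminus I_{1,n}$, by the vanishing argument established before Theorem~\ref{thm:S1gSg1}. Since $X$ and $Y$ lie in $\mathcal{C}_{a^{\pm 1}}$, which are module categories over $\mathcal{C}_1$ and descend to the $a^{\pm 1}$-components of the de-equivariantization, the $S$-entries $S_{X,Z}$, $S_{Z,Y}$ transport consistently. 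Next I would combine the Verlinde identity $S_{X,Z}S_{Y,Z}=\dim^R(Z)\sum_W N^W_{X,Y}S_{W,Z}$ (as used in the proof of Theorem~\ref{thm:S1gSg1}) with the standard ``Kirby colour'' computation $\sum_{Z \in \Irr(\mathcal{D})} \theta_Z^{-1} \dim^R(Z) S_{W,Z} = \tau^-(\mathcal{D}) \dim^R(W) \theta_W^{-1}$, the latter following from the Hopf-link identity $\theta_A\theta_B S_{A,B}=\sum_W N^W_{A,B}\theta_W\dim^R(W)$ and the definition of the Gauss sum. This yields
\[
  \sum_{Z \in \Irr(\mathcal{D})} S_{X,Z}\,\theta_Z^{-1}\,S_{Z,Y} = \tau^-(\mathcal{D})\,\theta_X\,\theta_Y\,S_{X,Y},
\]
and Lemma~\ref{lem:tau+tau-} converts $\tau^-(\mathcal{D})$ into $\tau^-(\mathcal{C}_1)/\# A$, producing the required prefactor.

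The hard part will be the case $z \neq 1$, where $\mathcal{D}$ is only slightly degenerate. Here the ``Kirby colour'' identity has to be handled in the super-setting following \cite{lacabanne-slightly}: the sum over $\Irr(\mathcal{D})$ is replaced by a sum over $\sVect$-super-orbits, and this matches the passage from $I_{1,n}$ to $[I_{1,n}]$ together with the orbit splitting induced by the characters $\alpha \mapsto \alpha(z) \in \{\pm 1\}$ coming from the non-trivial pivotal structure on $\Rep(A,z)$. Tracking these $\pm 1$ factors carefully, so that the division by $\# A$ on the right emerges uniformly from Lemma~\ref{lem:tau+tau-} in both the $z=1$ and $z \neq 1$ cases, is where most of the concrete verification lies.
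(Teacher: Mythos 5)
Your computational core (the column-character/Verlinde step, a twisted Gauss-sum identity, and balancing) is the right skeleton and is in fact what the paper uses, but the reduction to the de-equivariantization $\mathcal{D}$ of $\mathcal{C}_1$ is where the argument breaks. First, $X\in[I_{a^{-1}}]$ and $Y\in[I_a]$ are not objects of $\mathcal{D}$, so the ``classical modular identity'' $STS=\tau^-T^{-1}ST^{-1}$ in $\mathcal{D}$ does not apply to the entries $S_{X,Z}$, $S_{Z,Y}$: what you need is a crossed analogue of it, which is essentially the statement being proved. Relatedly, the simples of $\mathcal{D}$ arising from the splitting of objects of $I_1\setminus I_{1,n}$ do occur in $\Irr(\mathcal{D})$, and the vanishing $S_{X,Z}=0$ you invoke is a statement about simple objects of $\mathcal{C}_1$, not about these split summands, for which the pairing with $X$ is not even defined inside $\mathcal{D}$. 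Second, the normalisation does not come out uniformly as you claim: when $z\neq 1$ the de-equivariantization in Lemma \ref{lem:tau+tau-} is taken with respect to $A/\langle z\rangle$, so $\tau^{-}(\mathcal{D})=\frac{2}{\#A}\tau^{-}(\mathcal{C}_1)$ rather than $\frac{1}{\#A}\tau^{-}(\mathcal{C}_1)$, and $\mathcal{D}$ is only slightly degenerate, so the classical identity is unavailable there anyway; the missing factor $2$ and the super-orbit bookkeeping are exactly the part you defer. A further slip: with your own balancing convention $\theta_A\theta_B S_{A,B}=\sum_W N_{A,B}^W\theta_W\dim^R(W)$, the Kirby-colour identity must read $\sum_Z\theta_Z^{-1}\dim^R(Z)S_{Z,W}=\tau^{-}\dim^R(W)\theta_W$ (positive power of $\theta_W$); with $\theta_W^{-1}$ as written, the chain does not produce $\theta_X\theta_Y S_{X,Y}$.

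The fix, and the paper's route, is that no passage to $\mathcal{D}$ is needed at all: the Gauss-sum identity $\sum_{Z\in I_1}\theta_Z^{-1}\dim^R(Z)S_{Z,W}=\tau^{-}(\mathcal{C}_1)\dim^R(W)\theta_W$ holds in the (degenerate) premodular category $\mathcal{C}_1$ itself, in the non-spherical setting by \cite{lacabanne-slightly}. One first replaces $\sum_{Z\in[I_{1,n}]}$ by $\frac{1}{\#A}\sum_{Z\in I_1}$, using that $S_{Y,Z}=0=S_{Z,X}$ for $Z\in I_1\setminus I_{1,n}$ and that the summand $S_{Y,Z}\theta_Z^{-1}S_{Z,X}$ is invariant under $Z\mapsto X_\alpha\otimes Z$ by Lemma \ref{lem:tensorisation-rep-G} (the factors $\alpha(z)$, $\alpha(a^{\pm 1})$ cancel and $\theta_{X_\alpha}=1$); then the Verlinde step, the Gauss-sum identity in $\mathcal{C}_1$ and balancing give $\frac{\tau^{-}(\mathcal{C}_1)}{\#A}\theta_X\theta_Y S_{X,Y}$ directly. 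In particular the factor $\frac{1}{\#A}$ comes from this orbit count, not from Lemma \ref{lem:tau+tau-}, and no case distinction on $z$ is required.
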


\begin{proof}
  Let $X\in[I_a]$ and $Y\in[I_{a^{-1}}]$. First we have
\[
  (S_{a^{-1},1}T_1S_{1,a})_{Y,X} = \sum_{Z\in [I_{1,n}]}S_{Y,Z}\theta_{Z}^{-1}S_{Z,X} = \frac{1}{\# A}\sum_{Z\in I_1}S_{Y,Z}\theta_{Z}^{-1}S_{Z,X},
\]
since $S_{Y,Z}=0=S_{X,Z}$ for all $Z\in I_1\setminus I_{1,n}$ and $S_{Y,X_\alpha\otimes Z}\theta_{X_\alpha\otimes Z}^{-1}S_{X_\alpha\otimes Z,X} = S_{Y,Z}\theta_{Z}^{-1}S_{Z,X}$ for all $Z\in I_1$ and $\alpha\in\hat{A}$. Then, using \cite{lacabanne-slightly}, we have
\begin{align*}
  \sum_{Z\in I_1}S_{Y,Z}\theta_{Z}^{-1}S_{Z,X} 
  &= \sum_{W\in I_1}N_{Y,X}^W\sum_{Z\in I_1}\theta_{Z}^{-1}\dim^R(Z)S_{Z,W}\\
  &= \tau^-(\mathcal{C}_1)\sum_{W\in I_1}N_{Y,X}^W\dim^R(W)\theta_W\\
  &= \tau^-(\mathcal{C}_1)\theta_X\theta_YS_{X,Y},
\end{align*}
which concludes the proof.
\end{proof}

\subsubsection{When $A=\mathbb{Z}/2\mathbb{Z}$}
\label{sec:n=2}

When the order of $A$ is $2$, the fact that $a=a^{-1}$ enables us to show some extra relation between the matrices defines above.

\begin{theoreme}
  \label{thm:S-and-T-order-2}
  Let $\mathcal{C}$ be a non-degenerate braided pivotal fusion category containing $\Rep(A,z)$, with $A$ cyclic of order $2$. We choose $\sqrt{\dim(\mathcal{C}_1)}$ a square root of $\dim(\mathcal{C}_1)$ in $\Bbbk$, as well as $\sqrt{\dim^R(\bar{\mathbf{1}})}$ a square root of $\dim^R(\bar{\mathbf{1}})$. We define:
\[
  \tilde{S}_{1,a}=\frac{S_{1,a}}{\sqrt{\frac{1}{2}\dim(\mathcal{C}_1)}\sqrt{\dim^R(\bar{\mathbf{1}})}}\quad\text{and}\quad \tilde{S}_{a,1}=\frac{S_{a,1}}{\sqrt{\frac{1}{2}\dim(\mathcal{C}_1)}\sqrt{\dim^R(\bar{\mathbf{1}})}}.
\]

  Then 
  \begin{multline*}
  (\tilde{S}_{a,1}\tilde{S}_{1,a})^2=\id, (\tilde{S}_{1,a}\tilde{S}_{a,1})^2=\id, (\tilde{S}_{a,1}T_1\tilde{S}_{1,a}T_a^2)^2 = \frac{\tau^{-}(\mathcal{C}_1)}{\tau^{+}(\mathcal{C}_1)\dim^R(\bar{\mathbf{1}})}Q
  \\\text{and}\quad
  (\tilde{S}_{a,1}T_1^{-1}\tilde{S}_{1,a}T_a^{-2})^2 = \frac{\tau^{+}(\mathcal{C}_1)\dim^R(\bar{\mathbf{1}})}{\tau^{-}(\mathcal{C}_1)}Q,
  \end{multline*}
where $Q$ is the permutation matrix of the involution $\bar{\phantom{a}}$ restricted to $[I_a]$.
\end{theoreme}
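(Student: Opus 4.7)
My plan is to reduce each of the four identities to the content of Theorem \ref{thm:S1gSg1}, Proposition \ref{prop:S-and-T}, and Lemma \ref{lem:tau+tau-}, exploiting the simplifications $a=a^{-1}$, $S_{a^{-1},a}=S_{a,a}$ and $T_{a^{-1}}=T_a$ which hold when $\#A=2$. Throughout I will also use the fact, noted in the paragraph preceding the theorem, that $T_a$ commutes with $Q$, since $\theta_{\bar X}=\theta_X$ and $[I_a]=[I_{a^{-1}}]$ by Remark \ref{rk:compatible-choices}(2).

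The first two identities follow directly from Theorem \ref{thm:S1gSg1}. Unwinding the normalization, I would show
\[
\tilde S_{a,1}\tilde S_{1,a}=\tfrac{2}{\dim(\mathcal{C}_1)\dim^R(\bar{\mathbf{1}})}S_{a,1}S_{1,a}=Q,\qquad
\tilde S_{1,a}\tilde S_{a,1}=P.
\]
Then $Q^2=\id$ because $Q$ is the permutation matrix of the involution $\bar{\phantom{a}}$ on $[I_a]$. For $P^2=\id$, I would use Remark \ref{rk:compatible-choices}(1) to define an involution $\sigma$ on $[I_{1,n}]$ and a function $\alpha\colon[I_{1,n}]\to\hat A$ by $\bar X\simeq X_{\alpha(X)}\otimes\sigma(X)$, check (by applying $\bar{\phantom{a}}$ to both sides and invoking triviality of stabilizers) that $\sigma^2=\id$ and $\alpha\circ\sigma=\alpha$, and conclude that $P_{X,\sigma(X)}=\alpha(X)(z)\alpha(X)(a^{-1})$ is the only non-zero entry in each row. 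Since $\#A=2$ the values $\alpha(X)(z)$ and $\alpha(X)(a^{-1})$ lie in $\{\pm 1\}$, so $(P^2)_{X,X}=\alpha(X)(z)^2\alpha(X)(a^{-1})^2=1$.

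For the third identity, Proposition \ref{prop:S-and-T} with $a=a^{-1}$ gives $S_{a,1}T_1S_{1,a}=\tfrac{\tau^-(\mathcal{C}_1)}{2}T_a^{-1}S_{a,a}T_a^{-1}$, hence after normalization
\[
\tilde S_{a,1}T_1\tilde S_{1,a}T_a^{2}=\tfrac{\tau^-(\mathcal{C}_1)}{\dim(\mathcal{C}_1)\dim^R(\bar{\mathbf{1}})}T_a^{-1}S_{a,a}T_a.
\]
Squaring, the inner $T_aT_a^{-1}$ cancels, and I would invoke $S_{a,a}^2=\tfrac{\dim(\mathcal{C}_1)\dim^R(\bar{\mathbf{1}})}{2}Q$ from Theorem \ref{thm:S1gSg1} together with $T_aQ=QT_a$ to obtain a scalar multiple of $Q$. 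The scalar simplifies via Lemma \ref{lem:tau+tau-} ($\tau^+\tau^-=2\dim(\mathcal{C}_1)$) to $\tfrac{\tau^-(\mathcal{C}_1)}{\tau^+(\mathcal{C}_1)\dim^R(\bar{\mathbf{1}})}$, as required. The fourth identity is identical in structure, but requires first establishing the $\theta\leftrightarrow\theta^{-1}$ analog of Proposition \ref{prop:S-and-T}, namely $S_{a,1}T_1^{-1}S_{1,a}=\tfrac{\tau^+(\mathcal{C}_1)}{2}T_aS_{a,a}T_a$; this is proved verbatim, replacing $\theta_Z^{-1}$ by $\theta_Z$ inside the key sum and using the standard identity $\sum_ZS_{Y,Z}\theta_ZS_{Z,X}=\tau^+(\mathcal{C}_1)\theta_X^{-1}\theta_Y^{-1}S_{X,Y}$ from \cite{lacabanne-slightly}. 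The remainder of the computation mirrors the third identity.

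The main obstacle is purely organizational: carefully tracking the conjugations by $T_a$ and the twist-inversion analog of Proposition \ref{prop:S-and-T}, so that the interplay between $S_{a,a}^2=\tfrac{\dim(\mathcal{C}_1)\dim^R(\bar{\mathbf{1}})}{2}Q$ and $\tau^+\tau^-=2\dim(\mathcal{C}_1)$ yields exactly the asymmetric ratios $\tau^{\mp}/\tau^{\pm}$ appearing on the right-hand side. The substantive mathematical input (non-degeneracy, the Gauss sum product, and the squares of the crossed $S$-blocks) has already been established; once the $a=a^{-1}$ simplification is in place all four equalities become a direct matrix-algebra manipulation.
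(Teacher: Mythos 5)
Your treatment of the first three identities is correct and essentially the paper's argument: $\tilde{S}_{a,1}\tilde{S}_{1,a}=Q$, $\tilde{S}_{1,a}\tilde{S}_{a,1}=P$ with $P,Q$ involutive signed permutation matrices (your verification that $P^2=\id$ via $\sigma^2=\id$ and $\alpha\circ\sigma=\alpha$ is a welcome expansion of the paper's terse remark), and the third relation follows from Proposition \ref{prop:S-and-T}, $S_{a,a}^2=\frac{1}{2}\dim(\mathcal{C}_1)\dim^R(\bar{\mathbf{1}})Q$, $QT_a=T_aQ$ and Lemma \ref{lem:tau+tau-}, exactly as in the paper.

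The fourth identity, however, contains a genuine error. The ``standard identity'' you invoke, $\sum_{Z}S_{Y,Z}\theta_Z S_{Z,X}=\tau^{+}(\mathcal{C}_1)\theta_X^{-1}\theta_Y^{-1}S_{X,Y}$, is false: the trace of the \emph{inverse} double braiding gives $\sum_{W}N_{X,Y}^{W}\dim^R(W)\theta_W^{-1}=\theta_X^{-1}\theta_Y^{-1}\tr^R\bigl((c_{X,Y}\circ c_{Y,X})^{-1}\bigr)$, and this trace is $S_{X^*,Y}$ (equivalently, up to $\dim^R(\bar{\mathbf{1}})$, an entry $S_{\bar{X},Y}$), not $S_{X,Y}$. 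Already in the pointed modular category on $\mathbb{Z}/3\mathbb{Z}$ one checks that $\sum_Z S_{Y,Z}\theta_Z S_{Z,X}=\tau^{+}\theta_X^{-1}\theta_Y^{-1}S_{X,Y^*}\neq\tau^{+}\theta_X^{-1}\theta_Y^{-1}S_{X,Y}$ in general. In the present non-spherical setting the corrected ``plus'' analogue of Proposition \ref{prop:S-and-T} therefore acquires both the permutation $Q$ coming from the involution $\bar{\phantom{a}}$ on $[I_a]$ and a factor $\dim^R(\bar{\mathbf{1}})$; concretely one gets $S_{a,1}T_1^{-1}S_{1,a}=\frac{\tau^{+}(\mathcal{C}_1)\dim^R(\bar{\mathbf{1}})}{2}\,T_aS_{a,a}QT_a$ rather than your $\frac{\tau^{+}(\mathcal{C}_1)}{2}\,T_aS_{a,a}T_a$. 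Squaring your (incorrect) version yields the scalar $\tau^{+}(\mathcal{C}_1)/\bigl(\tau^{-}(\mathcal{C}_1)\dim^R(\bar{\mathbf{1}})\bigr)$, which differs from the asserted $\tau^{+}(\mathcal{C}_1)\dim^R(\bar{\mathbf{1}})/\tau^{-}(\mathcal{C}_1)$ by $\dim^R(\bar{\mathbf{1}})^2$, and this is not $1$ for a general pivotal (non-spherical) $\mathcal{C}$ --- the discrepancy only disappears in the spherical, self-dual examples of Sections \ref{sec:twisted-modular-dihedral} and \ref{sec:central-extension}. The paper avoids this pitfall entirely: it takes the inverse of \eqref{eq:S-and-T}, substitutes $\tilde{S}_{1,a}^{-1}=\tilde{S}_{a,1}P$, $\tilde{S}_{a,1}^{-1}=P\tilde{S}_{1,a}$ and the expression of $S_{a,a}^{-1}$ through $S_{a,a}$ furnished by Theorem \ref{thm:S1gSg1}, uses $PT_1^{-1}P=T_1^{-1}$, and then concludes with Lemma \ref{lem:tau+tau-}, so no ``plus-twist'' Gauss identity is needed. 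Your route can be repaired by proving the corrected plus identity with the dual (or bar) inserted, but as written the key lemma of your fourth step is wrong and the constant does not come out.
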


\begin{proof}
  Theorem \ref{thm:S1gSg1} shows that both $\tilde{S}_{1,a}\tilde{S}_{a,1}$ and $\tilde{S}_{a,1}\tilde{S}_{1,a}$ are signed permutation matrices and are of order $2$.

  It follows from the proposition \ref{prop:S-and-T} that
\[
  (\tilde{S}_{a,1}T_1\tilde{S}_{1,a}T_a^2)^2 = \frac{\left(\frac{1}{2}\tau^{-}(\mathcal{C}_1)\right)^2}{\frac{1}{2}\dim(\mathcal{C}_1)\dim^R(\bar{\mathbf{1}})}Q,
\]
since $S_{a,a}^2=\frac{1}{2}\dim(\mathcal{C}_1)\dim^R(\bar{\mathbf{1}})Q$ and $QT_{a}=T_aQ$.

Finally, for the last relation, we start by taking the inverse of \eqref{eq:S-and-T}:
\[
  \tilde{S}_{1,a}^{-1}T_1^{-1}\tilde{S}_{a,1}^{-1} = \frac{\frac{1}{2}\dim(\mathcal{C}_1)\dim^R(\bar{\mathbf{1}})}{\frac{1}{2}\tau^-(\mathcal{C}_1)} T_aS_{a,a}^{-1}T_a.
\]

But, thanks to Theorem \ref{thm:S1gSg1}, we have $\tilde{S}_{1,a}^{-1}=\tilde{S}_{a,1}P,\tilde{S}_{a,1}^{-1}=P\tilde{S}_{1,a}$ and $\frac{1}{2}\dim(\mathcal{C}_1)\dim^R(\bar{\mathbf{1}})S_{a,a}^{-1}=S_{a,a}$. Since $PT_1^{-1}P = T_1^{-1}$, we find
\[
  (\tilde{S}_{a,1}T_1^{-1}\tilde{S}_{1,a}T_a^{-2})^2 =  \frac{\frac{1}{2}\dim(\mathcal{C}_1)\dim^R(\bar{\mathbf{1}})}{\left(\frac{1}{2}\tau^-(\mathcal{C}_1)\right)^2}Q.
\]
We conclude using Lemma \ref{lem:tau+tau-}.
\end{proof}


\section{Twisted modular data associated to dihedral groups}
\label{sec:twisted-modular-dihedral}

In this section, we start by reviewing the category related to the exotic Fourier transform associated to dihedral groups by Lusztig \cite{lusztig-exotic}. We will work out in particular the details of \cite[3.8]{lusztig-exotic}. We then apply the construction of Section \ref{sec:categ-cont-rep-g} and show that it gives rise to the Fourier matrix of the big family of unipotent characters of twisted dihedral groups, as described in \cite{geck-malle}. We work over the complex field and we fix $d>2$ an integer, $\zeta=\exp(\pi i/d)$ a primitive $2d$-th root of unity and $\xi=\zeta^2$.

\subsection{A Fourier matrix for diedral group}
\label{sec:fourier-dihedral}

We recall the definition of the dihedral modular datum of Lusztig \cite[Section 3.1]{lusztig-exotic}. Denote by $I$ the set of pairs $(i,j)$ such that
\[
  0<i<j<i+j<d\quad\text{or}\quad0=i<j<\frac{d}{2}.
\]
Let $I'$ be the set consisting of $(0,d/2)'$ and $(0,d/2)''$ if $d$ is even, and let $I'=\varnothing$ if $d$ is odd. Let $X$ be the disjoint union of $I$ and $I'$. Following Lusztig, define the matrix $S=(\{x,x'\})_{x,x'\in X}$ by
\[
  \{(i,j),(k,l)\}=\frac{\xi^{il+jk}+\xi^{-il-jk}-\xi^{ik+jl}-\xi^{-ik-jl}}{d}
\] 
if $(i,j)$ and $(k,l)$ belong to $I$,
\[
  \{(i,j),(0,d/2)'\}=\{(i,j),(0,d/2)''\}=\{(0,d/2)',(i,j)\}=\{(0,d/2)'',(i,j)\}=\frac{(-1)^i-(-1)^j}{d}
\]
if $(i,j)$ belongs to $I$ and $d$ is even, and
\begin{align*}
  \{(0,d/2)',(0,d/2)'\}=\{(0,d/2)'',(0,d/2)''\}&=\frac{1-(-1)^{d/2}+d}{2d},\\
  \{(0,d/2)',(0,d/2)''\}=\{(0,d/2)'',(0,d/2)'\}&=\frac{1-(-1)^{d/2}-d}{2d}.
\end{align*}
 
Lusztig also defines a vector $t=(t_x)_{x\in X}$ by $t_{i,j}=\xi^{-ij}$ if $(i,j)\in I$ and $t_x=1$ if $x\in I'$.

\begin{remark}
  \label{rk:modification-lusztig}
  Our formula for the definition of $S$ is not exactly the one of
  Lusztig. To recover the one of Lusztig, it suffices to apply the
  involution on $X$ defined by $(i,j)^{\flat}= (i,d-j)$
  if $(i,j)\in I$ and $i>0$ and $x^{\flat}x$ otherwise: Lusztig's $S$-matrix is $S_{x,x'^{\flat}}$.
\end{remark}

\begin{proposition}[{\cite[Proposition 3.2]{lusztig-exotic}}]
  \label{prop:luzstig-dihedral-decat}
  The matrix $S$ together with the diagonal matrix $T$ with entries $t_x$ for $x\in X$ satisfy
  \[ 
    S^2 = 1 \quad \text{and} \quad (ST)^3=1.
  \]
  Moreover, for any $x,y,z \in X$ the complex number
\[
  \sum_{u\in X}\frac{S_{x,u}S_{y,u}\overline{S_{z,u}}}{S_{(0,1),u}}
\]
is a non-negative integer.
\end{proposition}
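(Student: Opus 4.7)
My plan to prove this proposition proceeds in three stages, one for each claim, with the third being the deepest.

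For $S^2 = 1$, I would compute the matrix product $(S^2)_{x,x''} = \sum_{x' \in X} \{x,x'\}\{x',x''\}$ entry by entry. Observe that the defining formula for $\{(i,j),(k,l)\}$ is antisymmetric in $(i,j)$ (under $i \leftrightarrow j$) and in $(k,l)$ (under $k \leftrightarrow l$). Using this antisymmetry I would extend the summation over the restricted index set $I$ to a sum over all $(k,l) \in (\mathbb{Z}/d\mathbb{Z})^2$, absorbing the resulting multiplicity factor. Expanding the product then reduces each term to a finite Fourier sum $\sum_{k,l \in \mathbb{Z}/d\mathbb{Z}} \xi^{\alpha k + \beta l}$, which vanishes unless $\alpha \equiv \beta \equiv 0 \pmod d$ by orthogonality of characters of $\mathbb{Z}/d\mathbb{Z}$; the surviving terms recover the Kronecker delta on $X$. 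The case when $d$ is even requires care: the index $(0,d/2)$ is replaced by the two elements $(0,d/2)'$ and $(0,d/2)''$ of $I'$, and the special self-pairing and cross-pairing formulas are calibrated exactly so that the $2\times 2$ block on $I'$, together with the off-diagonal entries coupling $I$ to $I'$, produces the correct diagonal contribution to $S^2$.

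Next, since $S^2 = 1$, the identity $(ST)^3 = 1$ is equivalent to $STS = T^{-1}ST^{-1}$. I would verify this entry by entry. For $x = (i,j), x' = (k,l) \in I$, the $(x,x')$-entry of $STS$ is $\sum_{(a,b) \in X} \{x,(a,b)\}\, \xi^{-ab}\, \{(a,b),x'\}$, which after expansion becomes a combination of exponential sums $\sum_{a,b} \xi^{-ab + \alpha a + \beta b}$ for linear forms $\alpha, \beta$ in the entries of $x$ and $x'$. Summing over $a$ first yields a delta constraint on $b$, and a residual quadratic Gauss sum identity rearranges the expression into $t_x^{-1} \{x,x'\} t_{x'}^{-1}$, as required. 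The rows and columns indexed by $I'$ are treated separately using $t_x = 1$ for $x \in I'$.

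The third claim is the most substantial, and it is the main obstacle. The natural route is to realize $(S,T)$ as the modular data of a non-degenerate pre-modular category $\mathcal{C}$, so that by the Verlinde formula the quantity $\sum_{u \in X} S_{x,u}S_{y,u}\overline{S_{z,u}}/S_{(0,1),u}$ equals the fusion coefficient $N_{x,y}^{z} = \dim \Hom(x \otimes y, z)$, which is a non-negative integer by construction. Such a category is precisely the one constructed in Sections \ref{sec:WZW} and \ref{sec:degree_0} from the modularization of tilting modules for quantum $\mathfrak{sl}_2$ at a $2d$-th root of unity. The real work therefore lies in matching the $S$ and $T$ matrices produced by that categorical construction with the explicit combinatorial formulas given above; once this matching is in place, positivity is automatic from the categorical interpretation. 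A purely combinatorial argument would instead require controlling nontrivial cancellations between the four $\xi$-terms in the definition of $\{x,x'\}$ and would effectively reconstruct the fusion rules of the level $d-2$ WZW model for $\widehat{\mathfrak{sl}}_2$ by hand.
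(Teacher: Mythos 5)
Your plan is workable, but note first that the paper itself gives no proof of this statement: it is imported from Lusztig \cite[Proposition 3.2]{lusztig-exotic}, so what you propose is a reconstruction that mixes the two available routes, the direct computation (essentially Lusztig's) and the categorical one that the paper develops later for other purposes. For $S^2=1$ and $(ST)^3=1$ your unfolding is correct in outline: $\{(i,j),(k,l)\}$ extends to $(\mathbb{Z}/d\mathbb{Z})^2$ in each argument, vanishes on the degenerate locus $i\equiv\pm j$, is antisymmetric under the swap and invariant under negation, $X$ is a fundamental domain for the order-four group these generate (with the unique short orbit $\{(0,d/2),(d/2,0)\}$ for $d$ even resolved into $(0,d/2)'$ and $(0,d/2)''$), and the twist $t_{(i,j)}=\xi^{-ij}$ is invariant under that group; after unfolding, summing over one variable gives a plain character-orthogonality delta, so no quadratic Gauss sum is actually needed, contrary to what you say. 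The point you wave through ("calibrated exactly") is precisely where the content lies: checking that the $2\times2$ block with entries $\frac{1-(-1)^{d/2}\pm d}{2d}$ reproduces the fixed-point contribution in $S^2$ and $STS=T^{-1}ST^{-1}$ when $d$ is even. For the non-negativity, your Verlinde argument is exactly the paper's Sections \ref{sec:WZW}--\ref{sec:degree_0}: Theorem \ref{thm:cat-dihedral} identifies $(S,T)$ with the normalized $S$-matrix and twist of the modular category $\mathcal{C}_0^{\text{mod}}$ via $\Psi_0$, under which $(0,1)$ is the unit, all entries are real and every simple is self-dual, so the displayed sums are fusion coefficients. But that identification is itself the substantial step (Proposition \ref{prop:computation-ST-dihedral}: Müger's modularization, the splitting of $V_{d/2}\boxtimes V_{d/2}$, and pinning down the split entries via Gauss sums), and once you have it, your first two stages become redundant: $S^2$ is the charge-conjugation permutation (here the identity) and $(ST)^3=S^2$ because the anomaly is trivial, $\tau^{\pm}(\mathcal{C}_0^{\text{mod}})=\dim(\mathcal{C}_d)/2=\sqrt{\dim(\mathcal{C}_0^{\text{mod}})}$, since $\mathcal{C}\simeq\mathcal{Z}(\mathcal{C}_d)$. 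So either carry the by-hand computation through the even-$d$ block, or do all three claims categorically; as written, the proposal proves the easy parts twice and defers the hard part in both halves.
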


The matrix $S$ is called the Fourier matrix and the entries of the matrix $T$ is the diagonal matrix of eigenvalues of the Frobenius.

\subsection{A Fourier matrix for dihedral groups with automorphism}
\label{sec:fourier-twisted-dihedral}

We recall the definition of the modular datum for dihedral groups with automorphism as defined in \cite[6C]{malle-unipotente} and \cite[6.1]{geck-malle}. Let $J$ be the set of pairs $(k,l)$ of odd integers satisfying
\[
  0<k<l<k+l<2d.
\]
The Fourier matrix takes the form $S^{\mathrm{tw}}=(\langle(i,j),(k,l)\rangle)_{(i,j)\in I,(k,l)\in J}$ with
\[
  \langle(i,j),(k,l)\rangle=\frac{\zeta^{il+jk}+\zeta^{-il-jk}-\zeta^{ik+jl}-\zeta^{-ik-jl}}{d}.
\]
There is a notion of ``unipotent character'' indexed by $J$ and the eigenvalue of the Frobenius on the ``unipotent character'' associated to $(k,l)\in J$ is $\zeta^{kl}$. We denote by $F_1$ the diagonal matrix with entries $t_x$ for $x\in I$ and by $F_2$ the diagonal matrix with entries $\zeta^{kl}$ for $(k,l)\in J$.

\begin{proposition}[{\cite[Theorem 6.9]{geck-malle}}]
  The matrices $S^{\mathrm{tw}}$, $F_1$ and $F_2$ satisfy
  \[
    S^{\mathrm{tw}}{}^t\!S^{\mathrm{tw}}=1={}^t\!S^{\mathrm{tw}}S^{\mathrm{tw}}\quad\text{and}\quad (F_2{}^tS^{\mathrm{tw}}F_1^{-1}S^{\mathrm{tw}})^2=1.
  \]
Moreover, for any $x,y,z \in I$ the complex number
\[
  \sum_{u\in J}\frac{S_{x,u}S_{y,u}\overline{S_{z,u}}}{S_{(0,1),u}}
\]
is an integer.
\end{proposition}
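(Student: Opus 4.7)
The plan is to apply the machinery of Section~\ref{sec:categ-cont-rep-g} to an explicit non-degenerate braided pivotal fusion category $\mathcal{C}$ containing $\Rep(\mathbb{Z}/2\mathbb{Z},z)$ whose degree-$1$ component $\mathcal{C}_1$ recovers the dihedral modular datum of Proposition~\ref{prop:luzstig-dihedral-decat} and whose degree-$a$ component is parametrized (up to the action of $\hat{A}$) by the set $J$. The candidate is the category built from quantum $\mathfrak{sl}_2$ at an even root of unity, as announced in the introduction: the simple objects in $[I_{1,n}]$ and $[I_a]$ should match the index sets $I \sqcup I'$ and $J$ respectively, and the entries of the relevant submatrices of the $S$-matrix of $\mathcal{C}$ should be computed from the explicit double braiding.

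Once this category and the bijections of indexing sets are in place, I would identify $\tilde{S}_{1,a} = S^{\mathrm{tw}}$ and $\tilde{S}_{a,1} = {}^t\!S^{\mathrm{tw}}$ (with the normalizations of Theorem~\ref{thm:S-and-T-order-2}). The unitarity identities $S^{\mathrm{tw}}\,{}^t\!S^{\mathrm{tw}} = 1 = {}^t\!S^{\mathrm{tw}}\,S^{\mathrm{tw}}$ are then exactly the first two relations of that theorem, provided that the signed permutation matrices $P$ and $Q$ arising in Theorem~\ref{thm:S1gSg1} are trivial. This triviality should follow by checking that $\bar{\mathbf{1}} \simeq \mathbf{1}$ and that the involution $\bar{\phantom{a}}$ restricted to $[I_a]$ is the identity, which in the quantum $\mathfrak{sl}_2$ model is a straightforward computation on highest weights (and is supported by Remark~\ref{rk:compatible-choices}(2) for $\#A = 2$).

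For the $(ST)$-type relation $(F_2\,{}^t\!S^{\mathrm{tw}}\,F_1^{-1}\,S^{\mathrm{tw}})^2 = 1$, I would compare directly with the third equality of Theorem~\ref{thm:S-and-T-order-2}, namely
\[
(\tilde{S}_{a,1}\,T_1\,\tilde{S}_{1,a}\,T_a^2)^2 = \frac{\tau^{-}(\mathcal{C}_1)}{\tau^{+}(\mathcal{C}_1)\dim^R(\bar{\mathbf{1}})}\,Q.
\]
One reads off $T_1 = F_1^{-1}$ from the definitions (the diagonal of inverse twists matches Lusztig's $t_{(i,j)} = \xi^{-ij}$), while $T_a^2 = F_2$ because, although each twist on $[I_a]$ is only defined up to a sign, its square is intrinsic and should correspond to the Frobenius eigenvalues $\zeta^{kl}$. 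A direct computation of the Gauss sums $\tau^{\pm}(\mathcal{C}_1)$ in the quantum $\mathfrak{sl}_2$ model, combined with the value of $\dim^R(\bar{\mathbf{1}})$, should show that the right-hand scalar collapses to $1$ and that $Q$ is the identity, yielding the desired quadratic relation.

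Finally, the integrality of $\sum_{u\in J} S_{x,u}S_{y,u}\overline{S_{z,u}}/S_{(0,1),u}$ is a direct translation of the twisted Verlinde formula of Corollary~\ref{cor:twisted-verlinde}: its right-hand side is, up to the identified normalizations, the left-hand side of the Verlinde sum, and the left-hand side $\sum_{\alpha\in\hat{A}}\alpha(a)\alpha(z)N_{X,Y}^{X_\alpha\otimes Z}$ is an integer because $\#A = 2$ (see Remark~\ref{rk:fusion-ring}). The main obstacle is therefore not any of the linear-algebra identities, which are formal consequences of Section~\ref{sec:categ-cont-rep-g}, but the explicit combinatorial matching: one must construct the category $\mathcal{C}$, check that the quantum $\mathfrak{sl}_2$ computation reproduces the precise formulas $\langle(i,j),(k,l)\rangle$ and $\xi^{-ij}$, and verify that the non-spherical correction $\dim^R(\bar{\mathbf{1}})$ and the Gauss sums combine so that all scalar prefactors collapse to $1$.
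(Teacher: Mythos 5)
Your plan is correct in outline, but it is not how the paper treats this particular statement: there the proposition is simply quoted from Geck--Malle \cite[Theorem 6.9]{geck-malle}, with no proof given in the text. What you propose is in substance the categorical re-derivation that the paper itself carries out afterwards in Sections \ref{sec:WZW}--\ref{sec:degree-1}: take $\mathcal{C}=\mathcal{C}_d\boxtimes\mathcal{C}_d^{\rev}$ with $\varepsilon=V_{d-1}\boxtimes V_{d-1}$ generating $\Rep(\mathbb{Z}/2\mathbb{Z})$ (with $z=1$), identify $\tilde{S}_{0,1}$ with $S^{\mathrm{tw}}$ and the twists with $F_1^{-1}$ and $F_2$ (Theorems \ref{thm:cat-dihedral} and \ref{thm:cat-twisted-dihedral}), and then specialize Theorem \ref{thm:S1gSg1}, Theorem \ref{thm:S-and-T-order-2} and Corollary \ref{cor:twisted-verlinde} to the spherical situation where $\bar{\mathbf{1}}=\mathbf{1}$, all objects are self-dual (so $P$ and $Q$ are identities) and $\tau^{+}(\mathcal{C}_0)=\tau^{-}(\mathcal{C}_0)$; note that the paper avoids computing Gauss sums directly by using $\tau^{+}(\mathcal{C})=\tau^{+}(\mathcal{C}_d)\tau^{-}(\mathcal{C}_d)=\dim(\mathcal{C}_d)$, and that it deduces the quadratic relation from the fourth identity of Theorem \ref{thm:S-and-T-order-2} plus the reality of the entries, whereas your direct use of the third identity (conjugating by $F_2$) is an equally valid variant. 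The citation route is immediate but explains nothing; your route (the paper's Section \ref{sec:twisted-modular-dihedral}) buys a uniform categorical explanation, including the Frobenius eigenvalues as $\theta^{-2}$, for which one must use $\mathcal{C}_d\boxtimes\mathcal{C}_d^{\rev}$ rather than Lusztig's $\mathcal{C}_d\boxtimes\mathcal{C}_d$ (Remark \ref{rk:category-luzstig}), at the cost of the explicit matching done in Proposition \ref{prop:computation-ST-dihedral}. One indexing slip to fix when writing it up: the rows of $S^{\mathrm{tw}}$ are indexed by $I$ alone, so $[I_{1,n}]$ must be matched with $I$, not with $I\sqcup I'$; for $d$ even the two elements of $I'$ correspond to the simple objects of the modularization lying over the $\varepsilon$-fixed object $V_{d/2}\boxtimes V_{d/2}$, which has nontrivial stabilizer, hence is excluded from $I_{1,n}$ and has vanishing $S$-entries against degree-one objects — they only enter the untwisted matrix of Theorem \ref{thm:cat-dihedral}.
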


\subsection{The Drinfeld center of tilting modules for quantum $\mathfrak{sl}_2$}
\label{sec:WZW}

Let $\mathcal{C}_d$ be the fusion category of tilting modules for $\mathcal{U}_{\zeta}(\mathfrak{sl}_2)$ as defined in \cite[Section 3.3]{bakalov-kirillov}. This fusion category can also be described in term of representations of level $d-2$ of an affine Lie algebra of type $A_1$ with a truncated tensor product. An equivalence between these categories has been proved by Finkelberg \cite{finkelberg}. This category has $d-1$ simple objects $V_1,\ldots,V_{d-1}$, $V_1$ being the unit object. The $S$-matrix of this category is given by
\[
  S_{V_i,V_j}=\frac{\zeta^{ij}-\zeta^{-ij}}{\zeta-\zeta^{-1}}.
\]
This category is pivotal and the corresponding twist is given by
\[
  \theta_{V_i} = \zeta^{(i^2-1)/2}.
\]
In this category, every object is self-dual, and the pivotal structure is hence spherical.

To explain a categorification of its dihedral modular datum, Lusztig consider a degenerate subcategory of the tensor category $\mathcal{C}=\mathcal{C}_d\boxtimes\mathcal{C}_d^{\rev}$. Let us remark that $\mathcal{C}\simeq \mathcal{Z}(\mathcal{C}_d)$ where $\mathcal{Z}$ denotes the Drinfeld center. Indeed, the category $\mathcal{C}_d$ is non-degenerate and therefore is factorizable \cite[Proposition 8.20.12]{egno}: its Drinfeld center is equivalent to $\mathcal{C}_d\boxtimes\mathcal{C}_d^{\rev}$. This category has its $S$-matrix given by
\[
  S_{V_i\boxtimes V_j,V_k\boxtimes V_l}=\frac{\zeta^{ik}-\zeta^{-ik}}{\zeta-\zeta^{-1}}\frac{\zeta^{jl}-\zeta^{-jl}}{\zeta-\zeta^{-1}}
\]
and the twist is given by
\[
  \theta_{V_i\boxtimes V_j}=\zeta^{(i^2-j^2)/2}.
\]

Let $\varepsilon$ be the simple object $V_{d-1}\boxtimes V_{d-1}$, which is of quantum dimension $1$. As $V_{d-1}$ is of square $\mathbf{1}$ in $\mathcal{C}_d$, the object $\varepsilon$ is of square $\mathbf{1}$ in $\mathcal{C}$. The category $\mathcal{C}$ is non-degenerate and the object $\varepsilon$ generates a subcategory isomorphic to $\Rep(\mathbb{Z}/2\mathbb{Z})$. We now apply the results of Section \ref{sec:categ-cont-rep-g}. The simple object $X=V_i\boxtimes V_j$ is in degree $0$ if and only if $S_{\varepsilon,X}=\dim(X)$ and is in degree $1$ if and only if $S_{\varepsilon,X}=-\dim(X)$. As
\[
  S_{\varepsilon,V_i\boxtimes V_j}=(-1)^{i+j}\dim(V_i\boxtimes V_j),
\]
the simple objects of $\mathcal{C}_0$ are of the form $V_i\boxtimes V_j$ with $i\equiv j\mod 2$ and the simple objects of $\mathcal{C}_1$ are of the form $V_i\boxtimes V_j$ with $i\not\equiv j\mod 2$. 

\begin{remark}
  \label{rk:category-luzstig}
  The component of degree $0$ is not exactly the degenerated subcategory
  $\mathcal{C}'$ considered by Lusztig in \cite[Section
  3.8]{lusztig-exotic}. Indeed, he starts with the Deligne tensor product $\mathcal{C}\boxtimes \mathcal{C}$ instead of $\mathcal{C}\boxtimes\mathcal{C}^{\mathrm{rev}}$. He then only obtains the Fourier matrix but not the eigenvalues of the Frobenius.
\end{remark}

As $V_{d-1}\otimes V_i \simeq V_{d-i}$ in $\mathcal{C}_d$, we have $\varepsilon\otimes(V_i\boxtimes V_j)= V_{d-i}\boxtimes V_{d-j}$ in $\mathcal{C}$. Therefore tensorization by $\varepsilon$ on the set $I_0$ of simple objects of $\mathcal{C}_0$ has a fixed point if and only if $d$ is even, namely $V_{d/2}\boxtimes V_{d/2}$.

\subsection{Modularization of the component of degree $0$}
\label{sec:degree_0}

The only non-trivial object in the symmetric center of $\mathcal{C}_0$ is $\varepsilon$, which is of quantum dimension $1$ and of twist $1$. By a result of \cite{bruguieres}, there exists a unique minimal modularization $\mathcal{C}_0^{\text{mod}}$ of the category $\mathcal{C}_0$ together with a braided tensor functor $F\colon\mathcal{C}_0\rightarrow\mathcal{C}_0^{\text{mod}}$. It can be obtained by first adding an isomorphism between $\varepsilon$ and $\mathbf{1}$ and then by taking the idempotent completion. If $d$ is odd, the set of simple objects of $\mathcal{C}_0^{\text{mod}}$ are given by $V_i\boxtimes V_j$ where $i$ and $j$ are of same parity and $0<i<j<d$ or $0<i=j<d/2$. If $d$ is even, there are two more simple objects $(V_{d/2}\boxtimes V_{d/2})_+$ and $(V_{d/2}\boxtimes V_{d/2})_-$. We denote by $\tilde{I}$ the set of pair of integers $(i,j)$ such that $i$ and $j$ are of same parity and $0<i<j<d$ or $0<i=j<d/2$ and by $\tilde{I}'$ the empty set if $d$ is odd, the set containing two elements $(d/2,d/2)_+$ and $(d/2,d/2)_-$ if $d$ is even. Let $\tilde{X}$ be the union of $\tilde{I}$ and $\tilde{I'}$. We will index the $S$-matrix as well as the twist of $\mathcal{C}_0^{\text{mod}}$ by $\tilde{X}$.

\begin{proposition}
  \label{prop:computation-ST-dihedral}
  If $d$ is odd, the $S$-matrix and the values of the twists of the category $\mathcal{C}_0^{\text{mod}}$ are given by
  \[
    S_{(i,j),(k,l)} = \frac{\zeta^{ik}-\zeta^{-ik}}{\zeta-\zeta^{-1}}\frac{\zeta^{jl}-\zeta^{-jl}}{\zeta-\zeta^{-1}}\quad\text{and}\quad \theta_{(i,j)}=\zeta^{(i^2-j^2)/2}.
  \]
  
If $d$ is even, the $S$-matrix and the values of the twists of the category $\mathcal{C}_0^{\text{mod}}$ are given by
  \[
  S_{(i,j),(k,l)} = \frac{\zeta^{ik}-\zeta^{-ik}}{\zeta-\zeta^{-1}}\frac{\zeta^{jl}-\zeta^{-jl}}{\zeta-\zeta^{-1}},
  \]
  \begin{multline*}
    S_{(i,j),(d/2,d/2)_+}=S_{(i,j),(d/2,d/2)_-}=S_{(d/2,d/2)_+,(i,j)}=\\S_{(d/2,d/2)_-,(i,j)}=\frac{1}{2}\frac{\zeta^{id/2}-\zeta^{-id/2}}{\zeta-\zeta^{-1}}\frac{\zeta^{jd/2}-\zeta^{-jd/2}}{\zeta-\zeta^{-1}},
  \end{multline*}
  \begin{align*}
    S_{(d/2,d/2)_+,(d/2,d/2)_+}=S_{(d/2,d/2)_-,(d/2,d/2)_-}
    &=\frac{1}{2(\zeta-\zeta^{-1})^2}\left((-1)^{d/2}-1-d\right),\\
    S_{(d/2,d/2)_+,(d/2,d/2)_-}=S_{(d/2,d/2)_-,(d/2,d/2)_+}
    &=\frac{1}{2(\zeta-\zeta^{-1})^2}\left((-1)^{d/2}-1+d\right)
  \end{align*}
and
  \[
  \theta_{(i,j)}=\zeta^{(i^2-j^2)/2}\quad \text{and}\quad \theta_{(d/2,d/2)_+}=\theta_{(d/2,d/2)_-}=1.
  \]
\end{proposition}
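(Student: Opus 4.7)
The plan is to apply Bruguières' modularization theorem and then analyze the $\mathbb{Z}/2\mathbb{Z}$-de-equivariantization explicitly. First, by Lemma \ref{lem:faithful-grading} the symmetric center of $\mathcal{C}_0$ is $\langle\varepsilon\rangle\simeq\Rep(\mathbb{Z}/2\mathbb{Z})$, and since $\theta_\varepsilon=\zeta^{((d-1)^2-(d-1)^2)/2}=1$ this subcategory is Tannakian with trivial twist, producing the braided tensor functor $F\colon\mathcal{C}_0\to\mathcal{C}_0^{\text{mod}}$. The simple objects of $\mathcal{C}_0^{\text{mod}}$ correspond to orbits of $\Irr(\mathcal{C}_0)$ under $-\otimes\varepsilon$, with fixed points splitting into two simples. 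Since $\varepsilon\otimes(V_i\boxtimes V_j)=V_{d-i}\boxtimes V_{d-j}$, fixed points force $i=j=d/2$ and thus occur only when $d$ is even, which recovers the parametrization by $\tilde{X}$.

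For the $S$-matrix entries between non-split simples, the compatibility of $F$ with braiding and pivotal structure yields $S^{\mathcal{C}_0^{\text{mod}}}_{F(V_i\boxtimes V_j),F(V_k\boxtimes V_l)}=S^{\mathcal{C}_0}_{V_i\boxtimes V_j,V_k\boxtimes V_l}$. Well-definedness of this expression under a change of orbit representative follows from $\zeta^d=-1$ and the parity $k\equiv l\pmod 2$: indeed $S_{V_{d-k}\boxtimes V_{d-l},V_i\boxtimes V_j}=(-1)^{k+l}S_{V_k\boxtimes V_l,V_i\boxtimes V_j}=S_{V_k\boxtimes V_l,V_i\boxtimes V_j}$, which gives the announced formula for $S_{(i,j),(k,l)}$.

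For the split case (with $d$ even) let $W:=V_{d/2}\boxtimes V_{d/2}$, so $F(W)=W_+\oplus W_-$; the two summands have equal quantum dimension $\dim(W)/2$. Additivity of the $S$-matrix under direct sums immediately gives
\[
S_{(i,j),(d/2,d/2)_\pm}=\tfrac{1}{2}S^{\mathcal{C}_0}_{V_i\boxtimes V_j,W},
\]
matching the claim. For the four entries $S_{(d/2,d/2)_\pm,(d/2,d/2)_\pm}$, additivity alone only prescribes their row- and column-sums (equal to $S^{\mathcal{C}_0}_{W,W}$). To separate them I would combine: (i) the modularity of $\mathcal{C}_0^{\text{mod}}$, so that $S^2=\dim(\mathcal{C}_0^{\text{mod}})\,C$ with charge-conjugation matrix $C$ and $\dim(\mathcal{C}_0^{\text{mod}})=\dim(\mathcal{C}_d)^2/4$; (ii) the fusion rule for $V_{d/2}\otimes V_{d/2}$ in $\mathcal{C}_d$, which computes the coefficients $N_{W_\pm,W_\pm}^{\bullet}$ in $\mathcal{C}_0^{\text{mod}}$ via an explicit splitting idempotent in $\End_{\mathcal{C}_0^{\text{mod}}}(F(W))$; and (iii) the Verlinde formula. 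Solving the resulting linear system yields the announced expressions featuring $(-1)^{d/2}$ and $d$.

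Finally, since $F$ preserves the pivotal structure, $\theta_{F(V_i\boxtimes V_j)}=\theta_{V_i\boxtimes V_j}=\zeta^{(i^2-j^2)/2}$, and each split summand of $F(W)$ inherits the twist $\theta_W=\zeta^0=1$. The principal obstacle is step (ii): distinguishing $W_+$ from $W_-$ requires explicit control of a splitting idempotent in $\End_{\mathcal{C}_0^{\text{mod}}}(F(W))$, equivalent to a Frobenius--Schur-type computation for $W$ against the $\mathbb{Z}/2\mathbb{Z}$-action, and the sign $(-1)^{d/2}$ arises precisely through the multiplicity of $\mathbf{1}$ in the decomposition of $V_{d/2}\otimes V_{d/2}$ inside $\mathcal{C}_d$.
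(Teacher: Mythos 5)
There is a genuine gap, and it sits exactly where the proposition is hard: the $2\times 2$ block $S_{(d/2,d/2)_{\pm},(d/2,d/2)_{\pm}}$. Your ingredients (i)--(iii) do not constitute a proof as stated. From $S^2=\dim(\mathcal{C}_0^{\text{mod}})C$ together with the known row/column sums one only gets $(\alpha-\beta)^2=\pm\dim(\mathcal{C}_0^{\text{mod}})$, where $\alpha$ (resp.\ $\beta$) is the common diagonal (resp.\ off-diagonal) entry; the sign of the right-hand side depends on whether $W_{\pm}$ are self-dual or dual to each other, and even then $\alpha-\beta$ is determined only up to sign --- and this residual sign is \emph{not} removable by relabelling $W_{+}\leftrightarrow W_{-}$, since swapping the two simples does not exchange $\alpha$ and $\beta$. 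Resolving it via your step (ii) (fusion coefficients $N_{W_{\pm},W_{\pm}}^{\bullet}$ through an explicit splitting idempotent in $\End(F(W))$, i.e.\ a Frobenius--Schur-type analysis of how $\gamma$ and duality interact on $V_{d/2}\otimes V_{d/2}$) is precisely the computation you yourself flag as ``the principal obstacle'' and do not carry out; so the values $\frac{1}{2(\zeta-\zeta^{-1})^2}\bigl((-1)^{d/2}-1\mp d\bigr)$ are not actually derived. The paper sidesteps all of this by bringing in the twist: the modular relation $(ST^{-1})^3=\dim(\mathcal{C}_0^{\text{mod}})\,\tau^{+}(\mathcal{C}_0^{\text{mod}})\,\id$ forces $\alpha-\beta=\theta_W^{3}\,\tau^{-}(\mathcal{C}_0^{\text{mod}})$ with no sign ambiguity, and the Gauss sum is evaluated without any fusion-rule input by the observation that $2\tau^{+}(\mathcal{C}_0^{\text{mod}})=\tau^{+}(\mathcal{C}_0)=\tau^{+}(\mathcal{C})=\tau^{+}(\mathcal{C}_d)\tau^{-}(\mathcal{C}_d)=\dim(\mathcal{C}_d)$, using that tensoring by $\varepsilon$ is fixed-point free on $\Irr(\mathcal{C}_1)$ and flips the twist there. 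This Gauss-sum/twist argument is the missing idea in your proposal.

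A secondary, smaller gap: you assert that additivity ``immediately'' gives $S_{(i,j),(d/2,d/2)_{+}}=S_{(i,j),(d/2,d/2)_{-}}=\tfrac12 S^{\mathcal{C}_0}_{V_i\boxtimes V_j,W}$. Additivity only gives the \emph{sum} of the two entries; equal quantum dimensions of $W_{\pm}$ do not imply equality of their $S$-entries against all other simples. One must argue, as the paper does, that the cross term $c_{Y,X}\circ c_{X,Y}\circ(\id_Y\otimes\gamma)$ comes from a morphism $X\otimes Y\rightarrow\varepsilon\otimes X\otimes Y$ in $\mathcal{C}_0$ and hence has vanishing trace (alternatively, invoke the residual $\mathbb{Z}/2\mathbb{Z}$-action on $\mathcal{C}_0^{\text{mod}}$ swapping $W_{+}$ and $W_{-}$ and preserving the braiding and pivotal structure). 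The remaining parts of your outline --- identification of the unique fixed simple $V_{d/2}\boxtimes V_{d/2}$ for $d$ even, invariance of the $S$-entries and twists on free orbits, and $\theta_{W_{\pm}}=\theta_W=1$ --- are correct and agree with the paper.
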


\begin{proof}
  We recall the construction of Müger \cite{mueger} of the modularization of $\mathcal{C}_0$. Fix $\varphi$ an isomorphism between $\varepsilon\otimes \varepsilon$ and $\mathbf{1}$. Let $\mathcal{D}$ be the category with the same objects as $\mathcal{C}_0$ and with space of morphisms
\[
  \Hom_{\mathcal{D}}(X,Y):=\Hom_{\mathcal{C}_0}(X,Y)\oplus\Hom_{\mathcal{C}_0}(X,\varepsilon\otimes Y).
\]
The composition between two morphisms is defined the obvious way, using $\varphi$ if necessary. The tensor product is the same as the one in $\mathcal{C}_0$ on the objects and the tensor product of two morphisms is defined the obvious way, using the braiding and $\varphi$ if necessary. Duality in $\mathcal{C}_0$ naturally extends to a duality in $\mathcal{D}$, so does the pivotal structure. Note that the trace of a morphism $f\in\End_{\mathcal{D}}(X)$ coming from a morphism $X\rightarrow\varepsilon\otimes X$ in $\mathcal{C}_0$ is an element of $\Hom_{\mathcal{D}}(\mathbf{1},\mathbf{1})$ coming from a morphism $\mathbf{1}\rightarrow \varepsilon\otimes \mathbf{1}$ in $\mathcal{C}_0$ which is necessarily $0$.

The modularization $\mathcal{C}_0^{\text{mod}}$ of $\mathcal{C}_0$ is then the idempotent completion of $\mathcal{D}$: its objects are pairs $(X,e)$ where $X$ is on object of $\mathcal{D}$ and $e\in\Hom_{\mathcal{D}}(X,X)$ is an idempotent; a morphism between $(X,e)$ and $(Y,f)$ is simply a morphism $g\colon X\rightarrow Y$ such that $f\circ g = g\circ e$. All the structures extend from $\mathcal{D}$ to its idempotent completion. Moreover for $f\in\End_{\mathcal{C}_0^{\text{mod}}}((X,e))$ one have
\[
  \Tr_{(X,e)}^{\mathcal{C}_0^{\text{mod}}}(f)=\Tr_{X}^{\mathcal{D}}(f\circ e).
\]
Note that we have denoted in upperscript the category in which we compute the trace (here, the pivotal structure is spherical, so we droppel the upperscript $R$ ou $L$). The functor $F$ is simply $X\mapsto (X,\id_X)$ and therefore $F(X)$ is simple if and only if $X$ is simple and $X\not\simeq \varepsilon\otimes X$. If $X$ is simple and $X\simeq \varepsilon\otimes X$, there exists an isomorphism $\gamma\in\Hom_{\mathcal{D}}(X,X)$ such that $\gamma\circ\gamma=\id_X$ arising from an suitable isomorphism $g\colon X\rightarrow \varepsilon\otimes X$. Hence $e_{\pm}=\frac{1}{2}(\id_X\pm\gamma)$ is an idempotent and $F(X) = X_{+}\oplus X_{-}$ where $X_{\pm}=(X,e_{\pm})$.

Now suppose that $X$ is a simple object in $\mathcal{C}_0$ such that $F(X)=X_{+}\oplus X_{-}$ and that $Y$ is a simple object in $\mathcal{C}_0$ such that $F(Y)$ is still simple in $\mathcal{C}_0^{\text{mod}}$. Then
\[
  \Tr_{F(Y)\otimes X_{\pm}}^{\mathcal{C}_0^{\text{mod}}}(c_{X_{\pm},F(Y)}\circ c_{F(Y),X_{\pm}}) = \Tr_{Y,X}^{\mathcal{D}}(c_{X,Y}\circ c_{Y,X}\circ \id\otimes e_{\pm}) = \frac{1}{2}\Tr_{Y,X}^{\mathcal{C}_0}(c_{Y,X}\circ c_{X,Y}),
\]
since the morphism $c_{Y,X}\circ c_{X,Y} \circ \id_Y\otimes \gamma$ in $\mathcal{D}$ comes from a morphism $X\otimes Y\rightarrow \varepsilon\otimes X \otimes Y$ in $\mathcal{C}_0$ whose trace is zero. Similarly,
\[
  \Tr_{X_+\otimes X_{\pm}}^{\mathcal{C}_0^{\text{mod}}}(c_{X_{\pm},X_+}\circ c_{X_+,X_{\pm}})=\frac{1}{4}\Tr_{X,X}^{\mathcal{C}_0}(c_{X,X}\circ c_{X,X}\circ(\id_{X\otimes X}\pm (\varphi\otimes\id_{X\otimes X})\circ(\id_{\varepsilon}\otimes c_{X,\varepsilon}\otimes\id_X)\circ(g\otimes g))
\]
and
\[
  \Tr_{X_-\otimes X_{\pm}}^{\mathcal{C}_0^{\text{mod}}}(c_{X_{\pm},X_-}\circ c_{X_-,X_{\pm}})=\frac{1}{4}\Tr_{X,X}^{\mathcal{C}_0}(c_{X,X}\circ c_{X,X}\circ(\id_{X\otimes X}\mp (\varphi\otimes\id_{X\otimes X})\circ(\id_{\varepsilon}\otimes c_{X,\varepsilon}\otimes\id_X)\circ(g\otimes g)).
\]

If $d$ is odd, there are no simple objects $X$ in $\mathcal{C}_0$ such that $X\simeq \varepsilon\otimes X$. Therefore the $S$-matrix of $\mathcal{C}_0^{\text{mod}}$ is simply a submatrix of the $S$-matrix of $\mathcal{C}_0$.

If $d$ is even, there is exactly one simple object $X$ such that $X\simeq \varepsilon\otimes X$. The $S$-matrix has then the following form
\[
  S^{\mathcal{C}_0^{\text{mod}}} = 
  \begin{pmatrix}
    s &\hspace*{-\arraycolsep}\vline\hspace*{-\arraycolsep} & {}^tl & {}^tl\\
    \hline
    l &\hspace*{-\arraycolsep}\vline\hspace*{-\arraycolsep} & \alpha & \beta\\
    l &\hspace*{-\arraycolsep}\vline\hspace*{-\arraycolsep} & \beta & \alpha
  \end{pmatrix}
\]
where $s$ is a square matrix, $l$ a row vector and $\alpha,\beta$ are scalars. We have arranged the simple objects in an order such that $X_+$ and $X_-$ are the two last ones. The only unknown values in this matrix are $\alpha$ and $\beta$. We have $2(\alpha+\beta)=S^{\mathcal{C}_0}_{X,X}$. Now, the relation $S^2 = \dim(\mathcal{C}_0^{\text{mod}})E$, where $E$ is the permutation matrix giving the duality, gives
\[
  \begin{pmatrix}
    s^2+2{}^tll &\hspace*{-\arraycolsep}\vline\hspace*{-\arraycolsep} & s{}^tl+(\alpha+\beta){}^tl & s{}^tl+(\alpha+\beta){}^tl\\
    \hline
    ls+(\alpha+\beta)l &\hspace*{-\arraycolsep}\vline\hspace*{-\arraycolsep} & l{}^tl+\alpha^2+\beta^2 & l{}^tl+2\alpha\beta\\
    ls+(\alpha+\beta)l &\hspace*{-\arraycolsep}\vline\hspace*{-\arraycolsep} & l{}^tl+2\alpha\beta & l{}^tl+\alpha^2+\beta^2
  \end{pmatrix}
  =\dim(\mathcal{C}_0^{\text{mod}})
  \begin{pmatrix}
    1 &\hspace*{-\arraycolsep}\vline\hspace*{-\arraycolsep} & 0 & 0\\
    \hline
    0 &\hspace*{-\arraycolsep}\vline\hspace*{-\arraycolsep} & \gamma & \delta\\
    0 &\hspace*{-\arraycolsep}\vline\hspace*{-\arraycolsep} & \eta & \nu 
  \end{pmatrix},
\]
where $
\begin{pmatrix}
  \gamma & \delta\\
  \eta & \nu
\end{pmatrix}$
is the identity or $
\begin{pmatrix}
  0 & 1\\
  1 & 0
\end{pmatrix}$
whether $X_+$ and $X_{-}$ are auto-dual or dual to each other. Therefore, $(\alpha-\beta)^2=\dim(\mathcal{C}_0^{\text{mod}})$ if $X_+$ and $X_-$ are auto dual, and $(\alpha-\beta)^2=-\dim(\mathcal{C}_0^{\text{mod}})$ if $X_+$ and $X_-$ are dual to each other, so that in any cases $(\alpha-\beta)^4=\dim(\mathcal{C}_0^{\text{mod}})^2$.

Denoting by $T$ the diagonal matrix with entries the action of the inverse of the twist on simple objects of $\mathcal{C}_0^{\text{mod}}$, we have $(ST^{-1})^3=\dim(\mathcal{C}_0^{\text{mod}})\tau^{+}(\mathcal{C}_0^{\text{mod}})1$. By explicitly computing $(ST^{-1})^3$, we obtain that $\theta_{X}^{-3}(\alpha-\beta)^3=\dim(\mathcal{C}_0^{\text{mod}})\tau^{+}(\mathcal{C}_0^{\text{mod}})$. Hence $\alpha-\beta=\theta_X^3\tau^{-}(\mathcal{C}_0^{\text{mod}})$. Finally
\[
  \alpha=\frac{1}{4}\left(S^{\mathcal{C}_0}_{X,X}+2\theta_X^3\tau^{-}(\mathcal{C}_0^{\text{mod}})\right)\quad\text{and}\quad\beta=\frac{1}{4}\left(S^{\mathcal{C}_0}_{X,X}-2\theta_X^3\tau^{-}(\mathcal{C}_0^{\text{mod}})\right).
\]
The computation of $\tau^-(\mathcal{C}_0^{\text{mod}})$ requires the knowledge of Gauss sums, but we can avoid this technical step. We notice that $2\tau^+(\mathcal{C}_0^{\text{mod}})=\tau^{+}(\mathcal{C})$. Indeed, we have
\[
  \tau^{+}(\mathcal{C}) = \sum_{X\in\Irr(\mathcal{C}_0)}|X|^2\theta_X+ \sum_{X\in\Irr(\mathcal{C}_1)}|X|^2\theta_X
= \sum_{X\in\Irr(\mathcal{C}_0)}|X|^2\theta_X=\tau^+(\mathcal{C}_0)
\]
because $\theta_{\varepsilon\otimes X}=-\theta_{X}$ for
$X\in\Irr(\mathcal{C}_1)$ and because tensorization by $\varepsilon$
has no fixed points on $\Irr(\mathcal{C}_1)$. But
$\tau^{\pm}(\mathcal{C}_0)=2\tau^{\pm}(\mathcal{C}_0^{\text{mod}})$. Now, since $\mathcal{C}$ is equivalent to of $\mathcal{C}_d\boxtimes \mathcal{C}_d^{\mathrm{rev}}$, we have $\tau^{+}(\mathcal{C})=\tau^{+}(\mathcal{C}_d)\tau^{-}(\mathcal{C}_d)=\dim(\mathcal{C}_d)$
and
\[
  \dim(\mathcal{C}_d)=\sum_{i=1}^{d-1}\left(\frac{\zeta^i-\zeta^{-i}}{\zeta-\zeta^{-1}}\right) = -\frac{2d}{(\zeta-\zeta^{-1})^2}.
\]
Finally,
\[
  \alpha=\frac{1}{2(\zeta-\zeta^{-1})^2}\left((-1)^{d/2}-1-d\right)\quad\text{and}\quad\beta=\frac{1}{2(\zeta-\zeta^{-1})^2}\left((-1)^{d/2}-1+d\right).
\]
\end{proof}

Now we renormalize the $S$-matrix by the positive square root of $\dim(\mathcal{C}_0^{\text{mod}})$ which is equal to
\[
  \sqrt{\dim(\mathcal{C}_0^{\text{mod}})}=-\frac{d}{(\zeta-\zeta^{-1})^2}.
\]
If $d$ is odd, the renormalized $S$-matrix $\tilde{S}$ is given by
\[
  \tilde{S}_{(i,j),(k,l)} = \frac{\zeta^{ik-jl}+\zeta^{-ik+jl}-\zeta^{ik+jl}-\zeta^{-ik-jl}}{d},
\]
for $i$ and $j$ integers of same parity such that $0<i<j<d$ or $0<i=j<d/2$. The renormalized $S$-matrix $\tilde{S}$ is given by
\[
  \tilde{S}_{(i,j),(k,l)} = \frac{\zeta^{ik-jl}+\zeta^{-ik+jl}-\zeta^{ik+jl}-\zeta^{-ik-jl}}{d},
\]
for $(i,j),(k,l)\in\tilde{I}$, and if $d$ is even
\[
  \tilde{S}_{(i,j),(d/2,d/2)_+}
  =\tilde{S}_{(d/2,d/2)_+,(i,j)}
  =\tilde{S}_{(i,j),(d/2,d/2)_-}
  =\tilde{S}_{(d/2,d/2)_-,(i,j)}
  =\frac{1}{d}\left((-1)^{(i-j)/2}-(-1)^{(i+j)/2}\right),
\]
for $(i,j)\in\tilde{I}$, and if $d$ is even
\begin{align*}
  \tilde{S}_{(d/2,d/2)_+,(d/2,d/2)_+}=\tilde{S}_{(d/2,d/2)_-,(d/2,d/2)_-}&=\frac{1}{2d}\left(1-(-1)^{d/2}+d\right)\\
  \tilde{S}_{(d/2,d/2)_+,(d/2,d/2)_-}=\tilde{S}_{(d/2,d/2)_-,(d/2,d/2)_+}&=\frac{1}{2d}\left(1-(-1)^{d/2}-d\right).
\end{align*}

Let $\Psi_0\colon X \rightarrow \tilde{X}$ sending $(i,j)\in I$ to $(j-i,i+j)$ and, if $d$ is even, $(0,d/2)'$ on $(d/2,d/2)_+$ and $(0,d/2)''$ on $(d/2,d/2)_-$. It is easily checked that $\Psi_0$ is a bijection, and its inverse on $\tilde{I}$ is given by $(i,j)\mapsto \left(\frac{j-i}{2},\frac{i+j}{2}\right)$. The following for the matrix $\tilde{S}$ is due to Lusztig \cite[3.8]{lusztig-exotic}.

\begin{theoreme}
  \label{thm:cat-dihedral}
  The category $\mathcal{C}_0^{\text{mod}}$ is a categorification of the modular datum associated to the dihedral group. More explicitly, if $\tilde{S}$ denotes the renormalized $S$-matrix of $\mathcal{C}_0^{\text{mod}}$ and $\theta$ the twist,
\[
  \tilde{S}_{\Psi_0(x),\Psi_0(x')}=\{x,x'\}\quad\text{and}\quad \theta_{\Psi_0(x)}=t_x,
\] 
for every $x,x'\in X$.
\end{theoreme}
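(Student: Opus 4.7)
The proof I envision is a direct verification that under the bijection $\Psi_0$, the formulas for the renormalized $S$-matrix and twist of $\mathcal{C}_0^{\text{mod}}$ given just before the theorem match Lusztig's formulas from Section \ref{sec:fourier-dihedral}. All the substantial work has been done: Proposition \ref{prop:computation-ST-dihedral} computed the $S$-matrix and twist of $\mathcal{C}_0^{\text{mod}}$, and the text preceding the theorem already normalized by $\sqrt{\dim(\mathcal{C}_0^{\text{mod}})} = -d/(\zeta-\zeta^{-1})^2$ and expressed the entries in terms of $\zeta$. What remains is a bookkeeping exercise organized by the three types of entries in Lusztig's matrix.

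First I would treat the generic case $(i,j),(k,l)\in I$. Writing $\Psi_0(i,j)=(j-i,i+j)$ and $\Psi_0(k,l)=(l-k,k+l)$, the key algebraic identities are
\[
  (j-i)(l-k)+(i+j)(k+l)=2(ik+jl)\quad\text{and}\quad(j-i)(l-k)-(i+j)(k+l)=-2(il+jk),
\]
so that $\zeta^{(j-i)(l-k)\pm(i+j)(k+l)}$ reduces to $\xi^{\mp(il+jk)}$ and $\xi^{\pm(ik+jl)}$ respectively. Substituting these into the formula $\tilde{S}_{(i',j'),(k',l')}=\frac{1}{d}(\zeta^{i'k'-j'l'}+\zeta^{-i'k'+j'l'}-\zeta^{i'k'+j'l'}-\zeta^{-i'k'-j'l'})$ yields exactly Lusztig's expression for $\{(i,j),(k,l)\}$. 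For the twist, I compute $((j-i)^2-(i+j)^2)/2=-2ij$, hence $\theta_{\Psi_0(i,j)}=\zeta^{-2ij}=\xi^{-ij}=t_{(i,j)}$.

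For the mixed case where $d$ is even, I would plug $\Psi_0(i,j)=(j-i,i+j)$ into the formula $\tilde{S}_{(i,j),(d/2,d/2)_\pm}=\frac{1}{d}((-1)^{(i-j)/2}-(-1)^{(i+j)/2})$, noting that $((j-i)-(i+j))/2=-i$ and $((j-i)+(i+j))/2=j$, which gives $\frac{(-1)^i-(-1)^j}{d}=\{(i,j),(0,d/2)'\}$. The pure primed case is immediate since the formulas for $\tilde{S}_{(d/2,d/2)_\pm,(d/2,d/2)_\pm}$ obtained by dividing Proposition \ref{prop:computation-ST-dihedral}'s entries by $-d/(\zeta-\zeta^{-1})^2$ yield $\frac{1-(-1)^{d/2}\pm d}{2d}$, matching Lusztig's $\{(0,d/2)',(0,d/2)'\}$ and $\{(0,d/2)',(0,d/2)''\}$. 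The twist on the primed elements is $1$ on both sides by Proposition \ref{prop:computation-ST-dihedral} and the definition $t_x=1$ for $x\in I'$.

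There is no real obstacle; the only mildly delicate point is the sign bookkeeping when renormalizing by the negative quantity $\sqrt{\dim(\mathcal{C}_0^{\text{mod}})}=-d/(\zeta-\zeta^{-1})^2$, which flips an overall sign in the off-diagonal blocks and correctly reassembles the four-term expression in Lusztig's formula. Once this sign is tracked carefully, each of the three cases is a short substitution.
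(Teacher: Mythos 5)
Your proposal is correct and follows the same route as the paper, which states the theorem immediately after Proposition \ref{prop:computation-ST-dihedral} and the renormalized formulas, leaving precisely this substitution of $\Psi_0$ as the verification; your identities $(j-i)(l-k)\pm(i+j)(k+l)=2(ik+jl)$, $-2(il+jk)$, the exponent $((j-i)^2-(i+j)^2)/2=-2ij$ for the twist, and the mixed and primed boundary cases all check out. One small wording slip: $\sqrt{\dim(\mathcal{C}_0^{\text{mod}})}=-d/(\zeta-\zeta^{-1})^2$ is in fact the \emph{positive} square root (since $(\zeta-\zeta^{-1})^2<0$), though the sign it feeds into the four-term numerator works out exactly as you describe.
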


\subsection{The degree $1$ part of $\mathcal{C}$}
\label{sec:degree-1}

We now turn to the study of the matrix extracted from the degree $1$
part of $\mathcal{C}$ and show that we recover the Fourier matrix
associated to dihedral groups with non-trivial automorphism, as
described in Section \ref{sec:fourier-twisted-dihedral}. The simple
objects of the category $\mathcal{C}_1$ are $V_k\boxtimes V_l$ with
$0< k,l < d$ and $k\not\equiv l \mod 2$ and we have
$\varepsilon \otimes V_k\boxtimes V_l =V_{d-k}\boxtimes V_{d-l}$. We
then choose $\{V_k\boxtimes V_l \vert 0 < k < l < d, k\not\equiv l
\mod 2\}$ as representatives of the orbits on $\Irr(\mathcal{C}_1)$
under tensorization by $\varepsilon$. Denote by $\tilde{J}$ the set of
pairs of integers $(k,l)$ such that $0<k<l<d$ and $k\not\equiv l \mod 2$.  

The matrix $S_{0,1}$ (resp. $S_{1,0}$) is indexed by
$\tilde{I}\times\tilde{J}$ (resp. $\tilde{J}\times\tilde{I}$) and
\[
  (S_{0,1})_{(i,j),(k,l)} =
  \frac{\zeta^{ik}-\zeta^{-ik}}{\zeta-\zeta^{-1}}\frac{\zeta^{jl}-\zeta^{-jl}}{\zeta-\zeta^{-1}},
\]
for any $(i,j)\in\tilde{I}$ and $(k,l)\in\tilde{J}$.

Let $\Psi_1\colon J \rightarrow \tilde{J}$ sending $(k,l)\in I$ to
$\left(\frac{l-k}{2},\frac{k+l}{2}\right)$. It is easily checked that
$\Psi_1$ is a bijection, and its inverse is given by $(k,l)\mapsto
(l-k,k+l)$. An easy computation shows the following.

\begin{theoreme}
  \label{thm:cat-twisted-dihedral}
  The renormalized $S$-matrix $\tilde{S}_{0,1}$ associated to
  $\mathcal{C}$ is the Fourier matrix
  associated to the non trivial family of ``unipotent characters'' of
  the dihedral group with non-trivial automorphism:
  \[
    (\tilde{S}_{0,1})_{\Psi_0((i,j)),\Psi_1((k,l))}=\langle(i,j),(k,l)\rangle.
  \]
  Moreover, the inverse of the eigenvalue of the Frobenius is given by the square of
  the twist on $\mathcal{C}_1$:
  \[
    \theta_{\Psi_{1}((k,l))}^{-2}=\zeta^{kl}
  \]
\end{theoreme}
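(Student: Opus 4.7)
The plan is to reduce the claim to a direct symbolic verification, paralleling the strategy used for Theorem \ref{thm:cat-dihedral} but this time in the degree~$1$ component. There are three pieces to handle: determine the renormalization scalar, simplify the expanded formula for $(S_{0,1})_{(i,j),(k,l)}$, and finally perform the change of variables given by $\Psi_0$ and $\Psi_1$.

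For the renormalization, I would use Theorem \ref{thm:S-and-T-order-2} with $a$ equal to the non-trivial element of $\mathbb{Z}/2\mathbb{Z}$, so the factor is $\sqrt{\tfrac{1}{2}\dim(\mathcal{C}_0)}\sqrt{\dim^R(\bar{\mathbf{1}})}$. Since $\mathcal{C}\simeq\mathcal{C}_d\boxtimes\mathcal{C}_d^{\rev}$ is spherical and modular, $\bar{\mathbf{1}}\simeq\mathbf{1}$ and $\dim^R(\bar{\mathbf{1}})=1$. The identity $\dim(\mathcal{C}_d)=-2d/(\zeta-\zeta^{-1})^2$ (obtained in the proof of Proposition \ref{prop:computation-ST-dihedral}) then gives $\dim(\mathcal{C}_0)=\tfrac12\dim(\mathcal{C}_d)^2=2d^2/(\zeta-\zeta^{-1})^4$, whose positive square root is $-d/(\zeta-\zeta^{-1})^2$. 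Dividing the given formula for $(S_{0,1})_{(i,j),(k,l)}$ by this scalar and expanding the product $(\zeta^{ik}-\zeta^{-ik})(\zeta^{jl}-\zeta^{-jl})$ yields
\[
(\tilde{S}_{0,1})_{(i,j),(k,l)}=\frac{\zeta^{ik-jl}+\zeta^{-ik+jl}-\zeta^{ik+jl}-\zeta^{-ik-jl}}{d}.
\]

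The second step is a short algebraic matching. Writing $(i,j)=\Psi_0(x_1,x_2)=(x_2-x_1,x_1+x_2)$ and $(k,l)=\Psi_1(y_1,y_2)=((y_2-y_1)/2,(y_1+y_2)/2)$, a straightforward expansion gives $ik+jl=x_1y_1+x_2y_2$ and $ik-jl=-(x_1y_2+x_2y_1)$. Substituting into the previous display and comparing with the definition of $\langle\cdot,\cdot\rangle$ in Section \ref{sec:fourier-twisted-dihedral} establishes the first assertion.

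For the Frobenius eigenvalues, the twist of $V_{k'}\boxtimes V_{l'}$ was recorded as $\zeta^{(k'^2-l'^2)/2}$. Taking $(k',l')=\Psi_1((k,l))=((l-k)/2,(k+l)/2)$, we compute $k'^2-l'^2=(k'-l')(k'+l')=(-k)\cdot l=-kl$, so $\theta_{\Psi_1((k,l))}^{-2}=\zeta^{kl}$ as required. No genuine obstacle stands in the way of this approach; the only delicate points are correctly tracking signs when extracting the positive square root of $\dim(\mathcal{C}_0)$ (since $(\zeta-\zeta^{-1})^2$ is a negative real number) and verifying that $\Psi_0,\Psi_1$ are indeed bijections onto $\tilde{I}$ and $\tilde{J}$ with the claimed inverses, which is immediate from the parity constraints defining $I$ and $J$.
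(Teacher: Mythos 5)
Your proposal is correct and follows essentially the same route as the paper, which dispatches this theorem as ``an easy computation'': renormalize $S_{0,1}$ by $\sqrt{\tfrac{1}{2}\dim(\mathcal{C}_0)}\,\sqrt{\dim^R(\bar{\mathbf{1}})}=-d/(\zeta-\zeta^{-1})^2$, expand the product $(\zeta^{ik}-\zeta^{-ik})(\zeta^{jl}-\zeta^{-jl})$, and perform the change of variables via $\Psi_0,\Psi_1$ (your identities $ik+jl=x_1y_1+x_2y_2$, $ik-jl=-(x_1y_2+x_2y_1)$ and the twist computation $k'^2-l'^2=-kl$ all check out). Only a phrasing slip: $-d/(\zeta-\zeta^{-1})^2$ is the positive square root of $\tfrac{1}{2}\dim(\mathcal{C}_0)=d^2/(\zeta-\zeta^{-1})^4$, not of $\dim(\mathcal{C}_0)=2d^2/(\zeta-\zeta^{-1})^4$; the factor you actually use is the correct one.
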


Therefore the matrix $F_1$ is equal to the matrix $T_0^{-1}$ and the
matrix $F_2$ is the matrix $T_1^2$. The relation \cite[Theorem 6.9,
(F6')]{geck-malle} translates into the relation
\[
  (T_1^2\tilde{S}_{10}T_0\tilde{S}_{01})^2=1,
\]
and we have proven that in general
\[
  (T_1^{-2}\tilde{S}_{10}T_0^{-1}\tilde{S}_{01})^2=\frac{\tau^{+}(\mathcal{C}_0)}{\tau^{-}(\mathcal{C}_0)}\dim^R(\bar{\mathbf{1}})\tilde{S}_{10}\tilde{S}_{01}.
\]
But in this setting, the category $\mathcal{C}$ is spherical, hence
$\bar{\mathbf{1}}=\mathbf{1}$, we have seen that
$\tau^+(\mathcal{C}_0)=\tau^{-}(\mathcal{C}_0)$ and every object is
auto-dual, so that $\tilde{S}_{1,0}\tilde{S}_{0,1}=1$. Finally noticing
that the entries of $\tilde{S}_{0,1}$ and $\tilde{S}_{1,0}$ are real
shows that the two relations are indeed equivalent.


\section{Drinfeld double of a central extension of a finite group and ${}^2\!F_4$}
\label{sec:central-extension}

In this section, we show that the modular category of representations of the Drinfeld double of a central extension by a cyclic group fits into the framework of Section \ref{sec:categ-cont-rep-g}. If we consider a central extension of the symmetric group $\mathfrak{S}_4$ we then recover the Fourier matrix of the big family for the Ree group ${}^2\!F_4$, which was defined in \cite{geck-malle} and for which no categorical explanation was known.

\subsection{Reminders on the Drinfeld double of a finite group}
\label{sec:drinfeld-double}

The Drinfeld double of a finite group $G$ is a special case of a more general construction for finite dimensional Hopf algebras \cite[Chapter IX]{kassel}. Let $\mathbb{C}G$ be the group algebra of $G$ and $\mathbb{C}[G]$ the algebra of $\mathbb{C}$-valued functions on $G$. We denote by $g\in \mathbb{C}G$ the element corresponding to $g\in G$ and by $e_g\in\mathbb{C}[G]$ the function such that $e_g(h) = \delta_{g,h}$. The Drinfeld double $D(G)$ is isomorphic, as a vector space, to the tensor product $\mathbb{C}[G]\otimes \mathbb{C}G$ and $(e_gh)_{g,h\in G}$ is therefore a basis of $D(G)$. On this basis, the multiplication is defined by
\[
  (e_{g}h)(e_{g'}h') = \delta_{g,hg'h^{-1}}e_{g}(hh').
\]
The unit is $\sum_{g\in G}e_g$. This algebra is also a Hopf algebra, its coproduct $\Delta$, its counit $\varepsilon$ and its antipode $S$ are given by
\[
  \Delta(e_gh) = \sum_{g_1g_2=g}e_{g_1}h\otimes e_{g_2}h,\quad \varepsilon(e_gh) = \delta_{g,1}\quad\text{and}\quad S(e_gh) = h^{-1}e_{g^{-1}} = e_{h^{-1}g^{-1}h}h^{-1}.
\]
The category $D(G)\text{-}\mathrm{mod}$ of finite dimensional $D(G)$-modules is a fusion category \cite[Section 3.2]{bakalov-kirillov}: it is in particular semisimple and has a finite number of simple objects.

The algebra $D(G)$, as any Drinfeld double, has a universal $R$-matrix given by
\[
  R = \sum_{g\in G}g\otimes e_g,
\]
which endows the category $D(G)\text{-}\mathrm{mod}$ with a braiding \cite[Proposition XIII.1.4]{kassel}. The square of the antipode is the identity, hence the usual identification of a vector space with its bidual defines a pivotal structure, which is moreover spherical since the quantum dimensions are nothing more than the usual dimensions.

The classification of simple modules is given in \cite[Section 3.2]{bakalov-kirillov}. They are in bijection with the pairs $(g,\rho)$, where $g\in G$ and $\rho$ is an irreducible representation of the centralizer $Z_G(g)$ of $g$ in $G$, modulo the equivalence relation given by $(g,\rho)\sim (hgh^{-1},^{h}\!\rho)$. Here, ${}^h\!\rho$ the representation of $hZ_G(g)h^{-1} = Z_{G}(hgh^{-1})$ defined by ${}^{h}\!\rho(hxh^{-1}) = \rho(x)$. We will denote the simple representation of $D(G)$ corresponding to $(g,\rho)$ by $V_{g,\rho}$. Finally, it is not difficult to compute the $S$-matrix of this category \cite[Section 3.2]{bakalov-kirillov} and the category $D(G)\text{-}\mathrm{mod}$ is modular: its $S$-matrix is invertible or equivalently the only simple object in its symmetric center is $\mathbf{1}$.

\subsection{Drinfeld double of a central extension}
\label{sec:drinfeld-ext}

We fix here a finite group $G$ and we consider a central extension $\tilde{G}$ of $G$ by a cyclic group $A$ of order $n$. We have an exact sequence
\[
\begin{tikzcd}
  1 \arrow[r] & A \ar[r,"\iota"] & \tilde{G} \ar[r,"\pi"] & G \ar[r] & 1,
\end{tikzcd}
\]
and we will see $A$ as a subgroup of $\tilde{G}$.

The representations $V_{a,1}$ for $a\in A$ generate in $D(\tilde{G})\text{-}\mathrm{mod}$ a subcategory isomorphic to $\Rep(\hat{A})$. Then the category of finite dimensional modules over $D(\tilde{G})$ is graded by the group $\hat{A}$. Remark that $V_{a,1}$ is a one dimensional representation of $D(G)$ and that $V_{a,1}\otimes V_{b,1} \simeq V_{ab,1}$ for any $a,b\in A$.

\begin{lemma}
  \label{lem:graduation-Ahat}
  Let $\alpha\in \hat{A}$. For $g\in\tilde{G}$ and $\rho$ an irreducible representation of $Z_{\tilde{G}}(g)$, the simple module $V_{g,\rho}$ is in the component of degree $\alpha$ if and only if $A$ acts on $\rho$ by multiplication by the character $\alpha$.
\end{lemma}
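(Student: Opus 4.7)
The plan is to compute the double braiding $c_{V_{g,\rho}, V_{a,1}} \circ c_{V_{a,1}, V_{g,\rho}}$ directly from the universal $R$-matrix of $D(\tilde{G})$ and to match it with the action of $a \in A$ on $V_{g,\rho}$. Recall that $R = \sum_{h \in \tilde{G}} h \otimes e_h$ induces the braiding $c_{X,Y}(x \otimes y) = \sum_{h \in \tilde{G}} (e_h \cdot y) \otimes (h \cdot x)$ for any two $D(\tilde{G})$-modules $X$ and $Y$.

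First I would exploit the fact that $a$ is central in $\tilde{G}$: the conjugacy class of $a$ reduces to $\{a\}$, so the module $V_{a,1}$ is one-dimensional, spanned by a vector $w_0$, with $e_h \cdot w_0 = \delta_{h,a} w_0$ and $k \cdot w_0 = w_0$ for all $k \in \tilde{G}$. A direct substitution into the formula for the braiding then yields
\[
  c_{V_{g,\rho}, V_{a,1}}(v \otimes w_0) = w_0 \otimes (a \cdot v), \qquad c_{V_{a,1}, V_{g,\rho}}(w_0 \otimes v) = v \otimes w_0,
\]
where the second equality uses $\sum_h e_h = 1_{D(\tilde{G})}$. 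Composing, the double braiding on $V_{a,1} \otimes V_{g,\rho}$ sends $w_0 \otimes v$ to $w_0 \otimes (a \cdot v)$.

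Second, I would use that $A$ is central in $\tilde{G}$ to conclude that $A \subseteq Z_{\tilde{G}}(g)$. In the standard description of $V_{g,\rho}$ as a module induced from $\rho$ along $Z_{\tilde{G}}(g) \subseteq \tilde{G}$, the element $a \in A$ acts on each summand as $\id \otimes \rho(a)$, and hence as multiplication by the scalar $\rho(a)$ on the whole of $V_{g,\rho}$; Schur's lemma, applied to the irreducible $\rho$, guarantees that $\rho(a)$ is indeed a scalar, and the map $a \mapsto \rho(a)$ is a character of $A$.

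Combining the two steps, the double braiding with each $V_{a,1}$ acts as scalar multiplication by $\rho(a)$ on $V_{a,1} \otimes V_{g,\rho}$, so $V_{g,\rho}$ lies in the component of degree $\alpha \in \hat{A}$ if and only if $\rho(a) = \alpha(a)$ for every $a \in A$, which is precisely the condition that $A$ acts on $\rho$ by the character $\alpha$. The only subtle point is fixing the convention by which the braiding is extracted from $R$; once the convention is pinned down, the argument is a direct verification and presents no real obstacle.
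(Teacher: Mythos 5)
Your proposal is correct and follows essentially the same route as the paper: computing the double braiding of $V_{g,\rho}$ with each one-dimensional module $V_{a,1}$ directly from the $R$-matrix $R=\sum_h h\otimes e_h$ and identifying it with the action of the central element $a$ on $V_{g,\rho}$. You even make explicit a step the paper leaves implicit, namely that $a$ acts on the induced module $V_{g,\rho}$ by the scalar $\rho(a)$ via centrality and Schur's lemma.
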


\begin{proof}
  It suffices to compute the double braiding of $V_{g,\rho}$ with any object of the form $V_{a,1}$, $a\in A$. The morphism $c_{V_{a,1},V_{g,\rho}}\colon V_{a,1}\otimes V_{g,\rho}\rightarrow V_{g,\rho}\otimes V_{a,1}$ is given by
\[
  c_{V_{a,1},V_{g,\rho}}(v\otimes w) = \sum_{g\in\tilde{G}}e_g\cdot w \otimes v = w\otimes w
\]
and the morphism $c_{V_{g,\rho},V_{a,1}}\colon V_{g,\rho}\otimes V_{a,1}\rightarrow V_{a,1}\otimes V_{g,\rho}$ is given by
\[
  c_{V_{g,\rho},V_{a,1}}(w\otimes v) = v\otimes a\cdot w,
\]
since $e_g(V_{a,1})=0$ if $g\neq a$. The double braiding is then given by the action of $a$ on $V_{g,\rho}$ as expected.
\end{proof}

In order to extract form the $S$-matrix of $D(\tilde{G})\text{-}\mathrm{mod}$ a crossed $S$-matrix as in Section \ref{sec:categ-cont-rep-g}, we need to understand the fixed points on the set of simple objects of the component of trivial degree, under tensorization by $V_{a,1}$.

\begin{proposition}
  \label{prop:fixed-points-ext}
  Let $g\in\tilde{G}$ and $\rho$ an irreducible representation of $Z_{\tilde{G}}(g)$. The simple object $V_{g,\rho}$ is stable under tensorization by $V_{a,1}$ if and only if the conjugacy class of $g$ and $ag$ are equal and $\rho\simeq {}^{r}\!\rho$, where $r\in\tilde{G}$ is such that $ag=rgr^{-1}$.
\end{proposition}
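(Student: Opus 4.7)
The plan is to identify $V_{a,1}\otimes V_{g,\rho}$ explicitly as a simple $D(\tilde{G})$-module and then apply the classification of simples to read off the isomorphism condition. First I would describe $V_{g,\rho}$ concretely as the induced module $\mathbb{C}\tilde{G}\otimes_{\mathbb{C}Z_{\tilde{G}}(g)}\rho$, on which $e_h$ acts by projection onto the $kgk^{-1}=h$ summand and $k'$ acts by left multiplication on the $\tilde{G}$-factor. The one-dimensional module $V_{a,1}$ has all of $\tilde{G}$ acting trivially, and $e_h$ acting as $\delta_{h,a}$.

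Next I would compute the tensor product using $\Delta(e_h k)=\sum_{h_1h_2=h}e_{h_1}k\otimes e_{h_2}k$: for $v\in V_{a,1}$ and $w\in V_{g,\rho}$, only the term with $h_1=a$ survives and $k$ acts trivially on $v$, so
\[
(e_hk)(v\otimes w)=v\otimes (e_{a^{-1}h}k\cdot w).
\]
Since $a$ is central, $Z_{\tilde{G}}(ag)=Z_{\tilde{G}}(g)$, and the above action exhibits $V_{a,1}\otimes V_{g,\rho}$ as the simple module supported on the conjugacy class of $ag$ with representation $\rho$ on the centralizer; that is, $V_{a,1}\otimes V_{g,\rho}\simeq V_{ag,\rho}$.

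Finally I would invoke the classification recalled in Section \ref{sec:drinfeld-double}: $V_{ag,\rho}\simeq V_{g,\rho}$ as $D(\tilde{G})$-modules if and only if the pairs $(ag,\rho)$ and $(g,\rho)$ are conjugate, i.e.\ there exists $r\in\tilde{G}$ with $rgr^{-1}=ag$ and $\rho\simeq {}^r\!\rho$ as representations of $Z_{\tilde{G}}(g)$. The first condition is equivalent to the conjugacy classes of $g$ and $ag$ coinciding, which gives exactly the statement of the proposition.

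The only slightly delicate step is the explicit identification $V_{a,1}\otimes V_{g,\rho}\simeq V_{ag,\rho}$; once this is in hand, the conclusion is just a direct translation of the equivalence relation defining the simple modules of $D(\tilde{G})$. The centrality of $a$ is used twice: to guarantee $Z_{\tilde{G}}(g)=Z_{\tilde{G}}(ag)$ so that $\rho$ makes sense on both sides, and to ensure that $a$ multiplied against the conjugacy class of $g$ is again a single conjugacy class.
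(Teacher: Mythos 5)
Your proposal is correct and follows essentially the same route as the paper: identify $V_{a,1}\otimes V_{g,\rho}\simeq V_{ag,\rho}$ (using that $a$ is central, so the support becomes $a[g]=[ag]$ and the centralizer representation stays $\rho$) and then read off the isomorphism condition from the classification of simple $D(\tilde{G})$-modules. You merely make explicit, via the coproduct computation, the identification that the paper summarizes with ``it is checked that $\varphi\simeq\rho$''.
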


\begin{proof}
  The simple module $V_{g,\rho}$ is supported by the conjugacy class $[g]$ of $g$ in $\tilde{G}$ and $V_{a,1}\otimes V_{g,\rho}$ is supported by $a[g] = [ag]$ so that $V_{a,1}\otimes V_{g,\rho}\simeq V_{ag,\varphi}$ for $\varphi$ a simple representation of $Z_{\tilde{G}}(ag)=Z_{\tilde{G}}(g)$. It is checked that we have $\varphi\simeq\rho$ and then $V_{a,1}\otimes V_{g,\rho}\simeq V_{ag,\rho}$. But there is an isomorphism between $V_{ag,\rho}$ and $V_{g,\rho}$ if and only if $[g]=[ag]$ and $\rho\simeq ^{r}\!\rho$, where $r\in\tilde{G}$ is such that $ag=rgr^{-1}$.
\end{proof}

\begin{remark}
  \label{rk:fixed-points}
  If $\rho(a)\neq \id$, there is never an isomorphism between $V_{g,\rho}$ and $V_{a,1}\otimes V_{g,\rho}$. Indeed this follows from Lemma \ref{lem:fixed-points-trivial}. We can see it directly using Proposition \ref{prop:fixed-points-ext}: if such an isomorphism exists then we would have $\chi_{\rho}(g)=\chi_{\rho}(ag)$, where $\chi_{\rho}$ is the character of $\rho$, and therefore $\chi_{\rho}(a)=\chi_\rho(1)$ since $a$ is central in $Z_G(g)$.
\end{remark}

Since the category $D(\tilde{G})\text{-}\mathrm{mod}$ is modular, the symmetric center of the trivial component $(D(\tilde{G})\text{-}\mathrm{mod})_{1}$ is equal to $\Rep(\hat{A})$: the only simple objects of this symmetric center are the $V_{a,1}$ for $a\in A$, which are all of dimension $1$ and of twist $1$. By the criterion of Bruguières \cite{bruguieres}, a modularization of this category exists and is unique up to equivalence.

\begin{proposition}
  \label{prop:modularisation-double-extension}
  The modularization of the trivial component of $D(\tilde{G})\text{-}\mathrm{mod}$ is the modular category $D(G)\text{-}\mathrm{mod}$.
\end{proposition}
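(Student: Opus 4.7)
The plan is to construct an explicit dominant braided tensor functor from $(D(\tilde{G})\text{-}\mathrm{mod})_1$ to $D(G)\text{-}\mathrm{mod}$ whose kernel coincides with the symmetric center, and then invoke uniqueness of the modularization.

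First I would describe the functor $F\colon(D(\tilde{G})\text{-}\mathrm{mod})_1 \to D(G)\text{-}\mathrm{mod}$ as follows. A $D(\tilde{G})$-module is a $\tilde{G}$-graded vector space $M=\bigoplus_{\tilde{g}\in\tilde{G}}M_{\tilde{g}}$ equipped with a $\tilde{G}$-action such that $h\cdot M_{\tilde{g}}\subseteq M_{h\tilde{g}h^{-1}}$. By Lemma~\ref{lem:graduation-Ahat}, the triviality of the $\hat{A}$-degree forces $A$ to act trivially on each $M_{\tilde{g}}$, hence on $M$. Set $F(M):=\bigoplus_{g\in G}F(M)_g$ with $F(M)_g=\bigoplus_{\tilde{g}\in\pi^{-1}(g)}M_{\tilde{g}}$, equipped with the $G$-action obtained from the $\tilde{G}$-action, which factors through $\tilde{G}/A=G$. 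The tensor structure is essentially tautological from $\pi$ being a group homomorphism, and compatibility with the braiding follows from the formula for the $R$-matrix $\sum_{\tilde{g}\in\tilde{G}}\tilde{g}\otimes e_{\tilde{g}}$ together with the fact that the $\tilde{G}$-action on $F(M)$ factors through $G$. One has $F(V_{a,1})=\mathbf{1}$ for each $a\in A$, so $F$ kills the symmetric subcategory $\Rep(\hat{A})$.

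Next I would identify $F$ on simple objects. Using $V_{\tilde{g},\tilde{\rho}}=\Ind_{Z_{\tilde{G}}(\tilde{g})}^{\tilde{G}}(\tilde{\rho})$, the inclusion $Z_{\tilde{G}}(\tilde{g})/A\hookrightarrow Z_G(g)$ (with $g=\pi(\tilde{g})$), and transitivity of induction, a direct book-keeping of dimensions and gradings yields
\[
  F(V_{\tilde{g},\tilde{\rho}})\simeq V_{g,\,\Ind_{Z_{\tilde{G}}(\tilde{g})/A}^{Z_G(g)}(\tilde{\rho})}
\]
as $D(G)$-modules, the right-hand side being decomposed into simples via the irreducible constituents of the induced representation. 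From this formula, dominance is immediate: given a simple $V_{g,\rho}$ in $D(G)\text{-}\mathrm{mod}$, choose a lift $\tilde{g}$ of $g$ and let $\sigma$ be any irreducible constituent of $\rho|_{Z_{\tilde{G}}(\tilde{g})/A}$, pulled back to $Z_{\tilde{G}}(\tilde{g})$ (trivially on $A$); then $V_{\tilde{g},\sigma}\in(D(\tilde{G})\text{-}\mathrm{mod})_1$ and Frobenius reciprocity ensures that $V_{g,\rho}$ appears as a summand of $F(V_{\tilde{g},\sigma})$. The same formula shows $F(V_{\tilde{g},\tilde{\rho}})\simeq\mathbf{1}^{\oplus k}$ forces $\tilde{g}\in A$ and $\tilde{\rho}$ trivial, so $\ker F=\Rep(\hat{A})$ exactly.

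Since $D(G)\text{-}\mathrm{mod}$ is modular and $F$ is a dominant braided tensor functor whose kernel equals the symmetric center, Bruguières' uniqueness statement for the modularization identifies $D(G)\text{-}\mathrm{mod}$ with the modularization of $(D(\tilde{G})\text{-}\mathrm{mod})_1$. The main obstacle is the identification $F(V_{\tilde{g},\tilde{\rho}})\simeq V_{g,\,\Ind(\tilde{\rho})}$: one has to match the $G$-grading on $F(V_{\tilde{g},\tilde{\rho}})$ with that of the induced $D(G)$-module, which comes down to analysing the map $\phi_{\tilde{g}}\colon\pi^{-1}(Z_G(g))\to A$, $h\mapsto h\tilde{g}h^{-1}\tilde{g}^{-1}$, whose kernel is $Z_{\tilde{G}}(\tilde{g})$ and whose image controls the multiplicity of $\tilde{\rho}$ appearing over a given $g'\in\pi^{-1}(g)$. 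Once this is in hand, the braided tensor structure and the computation of the kernel are automatic.
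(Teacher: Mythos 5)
Your proposal is correct and takes essentially the same route as the paper: your functor obtained by regrading along $\pi$ (legitimate because $A$ acts trivially on the trivially graded component, by Lemma \ref{lem:graduation-Ahat}) is exactly the paper's functor built from the Hopf algebra maps $D(\tilde{G})\rightarrow D_{\tilde{G}}(G)\leftarrow D(G)$, dominance is obtained in both cases from a simple constituent of $\rho$ restricted to $Z_{\tilde{G}}(\tilde{g})$, and the conclusion rests on Bruguières' uniqueness of the modularization. The only differences are that you prove the sharper decomposition $F(V_{\tilde{g},\tilde{\rho}})\simeq V_{g,\Ind_{Z_{\tilde{G}}(\tilde{g})/A}^{Z_G(g)}(\tilde{\rho})}$ via the homomorphism $\phi_{\tilde{g}}$ (which the paper leaves implicit), while the paper gets compatibility with the braiding \emph{and} the ribbon structure for free from the Hopf algebra morphisms --- you should also record that the twist is preserved, which is immediate here since $F$ is the identity on underlying vector spaces and both pivotal structures are the standard ones.
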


\begin{proof}
  Let $\mathcal{C}=D(\tilde{G})\text{-}\mathrm{mod}$ and $\mathcal{C}_{1}$ the component of trivial grading. We first define a functor $F\colon \mathcal{C}_0 \rightarrow D(G)\text{-}\mathrm{mod}$ which respect the braiding and the twist. Let $D_{\tilde{G}}(G)$ the Hopf subalgebra of $D(\tilde{G})$ generated by $e_{g}h$ for $g\in\tilde{G}$ and $h\in G$. The three algebras $D(\tilde{G})$, $D_{\tilde{G}}(G)$ and $D(G)$ are related as follows:
\[
\begin{tikzcd}
  & D_{\tilde{G}}(G) & \\
  D(\tilde{G}) \ar[ru,"p"]& & D(G)\ar[lu,"i"']
\end{tikzcd}
\]
where $p(e_gh) = e_g\pi(h)$ for $g,h\in\tilde{G}$ and $i(e_gh) = \sum_{k\in\pi^{-1}(g)}e_kh$ for $g,h \in G$. The application $p$ is surjective and $i$ is injective. Now, a representation of $D(\tilde{G})$ factors through $p$ if and only if $A$ acts trivially on it, which is the case if and only if this representation is in $\mathcal{C}_{1}$. The functor $F$ is defined as the restriction via $i$ of this quotient. Since $p$ and $i$ are Hopf algebra morphisms, the braiding and the ribbon structure are preserved.

The category $D(G)\text{-}\mathrm{mod}$ being modular, it remains to show that the functor $F$ is dominant: every $D(G)$-module is the direct summand of an object in the image of $F$. Let $g\in G$ and $\rho$ a representation of $Z_G(g)$. We first choose $\tilde{g}\in\pi^{-1}(g)$. The quotient map $\pi$ restricts into a group morphism $Z_{\tilde{G}}(\tilde{g})\rightarrow Z_{G}(g)$, which is not necessarily surjective. Let us choose a simple summand $\tilde{\rho}$ of $\rho$ viewed as a representation of $Z_{\tilde{G}}(\tilde{g})$. The subgroup $A$ acts then trivially on $\tilde{\rho}$ and the simple module $V_{g,\rho}$ appears as a direct summand of $F(V_{\tilde{g},\tilde{\rho}})$.
\end{proof}

\subsection{An example related to the Ree group of type ${}^2\!F_4$}
\label{sec:2F4_fourier}

In Section \ref{sec:degree-1}, we gave an explanation for the Fourier matrices associated to the big families of unipotent characters for twisted dihedral groups $^{2}\!I_2(n)$. In \cite{geck-malle}, Geck and Malle proposed a notion of Fourier matrices for Suzuki and Ree groups, which coincide for ${}^2\!B_2$ and ${}^2\!G_2$ with the corresponding cases among dihedral groups. We now consider the Ree group of type ${}^2F_4$ and its family consisting of $13$ unipotent characters. An explanation of the Fourier matrix is given in terms of the Drinfeld double of the binary octahedral group.

\subsubsection{The Fourier matrix}
\label{sec:fourier-matrix}

We start by giving explicitly the matrix associated to the big family of unipotent characters of ${}^2\!F_4$:
\[
  S = \frac{1}{12}\left(
    \begin{smallmatrix}
      3&3&6&-6&-3&-3&-3&-3&-3&-3&0&0&0\\
      0&0&0&-4&4&4&0&0&0&0&4&4&-8\\
      -6&-6&0&-4&-2&-2&0&0&0&0&4&4&4\\
      3&3&6&2&1&1&3&3&3&3&4&4&4\\
      3\sqrt 2&-3\sqrt 2&0&0&-3\sqrt 2&3\sqrt 2&3\sqrt 2&3\sqrt 2&-3\sqrt 2&-3\sqrt 2&0&0&0\\
      3\sqrt 2&-3\sqrt 2&0&0&3\sqrt 2&-3\sqrt 2&-3\sqrt 2&3\sqrt 2&-3\sqrt 2&3\sqrt 2&0&0&0\\
      3\sqrt 2&-3\sqrt 2&0&0&3\sqrt 2&-3\sqrt 2&3\sqrt 2&-3\sqrt 2&3\sqrt 2&-3\sqrt 2&0&0&0\\
      0&0&0&0&0&0&6&-6&-6&6&0&0&0\\
      3&3&-6&-6&-3&-3&3&3&3&3&0&0&0\\
      -3&-3&6&-2&-1&-1&3&3&3&3&-4&-4&-4\\
      0&0&0&-4&4&4&0&0&0&0&4&-8&4\\
      0&0&0&-4&4&4&0&0&0&0&-8&4&4\\
      3\sqrt 2&-3\sqrt 2&0&0&-3\sqrt 2&3\sqrt 2&-3\sqrt 2&-3\sqrt 2&3\sqrt 2&3\sqrt 2&0&0&0
    \end{smallmatrix}\right).
\]
The order of rows and columns is not the one of \cite[Theorem 5.4]{geck-malle} but the one chosen in the package CHEVIE \cite{chevie,chevie-jean} of GAP. As in Section \ref{sec:fourier-twisted-dihedral} we have two diagonal matrices $F_1$ and $F_2$ given by
\[
  F_1=\diag(1, 1, 1, 1, 1, \zeta_4, -\zeta_4, -1, 1, 1, \zeta_3, \zeta_3^2, -1)
\]
and
\[
  F_2=\diag(1, 1, 1, -1, -1, -1, -\zeta_4, \zeta_4, \zeta_4, -\zeta_4, -\zeta_3, -\zeta_3^2, -1).
\]

\begin{proposition}[{\cite[Theorem 6.9]{geck-malle}}]
  \label{thm:2F4-decat}
  The matrix $S$ is unitary and satisfies $(F_2SF_1^{-1}S)^2=\id$. Moreover, for any $i,j,k\in\{1,2,\ldots,13\}$, the number
\[
  N_{i,j}^k:=\sum_{l=1}^{13}\frac{S_{i,l}S_{j,l}\overline{S_{k,l}}}{S_{4,l}}
\]
is a non-negative integer.
\end{proposition}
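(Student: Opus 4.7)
The plan is to realize the data $(S, F_1, F_2)$ as coming from the modular category $\mathcal{C} = D(\tilde{G})\text{-}\mathrm{mod}$, where $\tilde{G}$ is the binary octahedral group $2.\mathfrak{S}_4$, viewed as a central extension $1 \to A \to \tilde{G} \to \mathfrak{S}_4 \to 1$ with $A = \mathbb{Z}/2\mathbb{Z}$. By Lemma \ref{lem:graduation-Ahat} the category $\mathcal{C}$ is $\hat{A}$-graded, and Proposition \ref{prop:modularisation-double-extension} places it in the framework of Section \ref{sec:categ-cont-rep-g} with $A$ cyclic of order $2$ and $z = 1$. Once $S$ is identified (up to reordering of rows and columns) with the renormalized crossed submatrix $\tilde{S}_{1,a}$, and $F_1$, $F_2$ with the appropriate diagonal twist matrices, the three claims will follow from Theorem \ref{thm:S-and-T-order-2} and Corollary \ref{cor:twisted-verlinde}.

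Concretely, I would first list the simple modules $V_{g,\rho}$ of $D(\tilde{G})$ via the conjugacy classes of $\tilde{G}$ and irreducible representations of the corresponding centralizers, determine the $\hat{A}$-degree of each by reading off the action of the central element $a$ on $\rho$ (Lemma \ref{lem:graduation-Ahat}), and identify the $\hat{A}$-orbits on $\Irr(\mathcal{C}_1)$ and $\Irr(\mathcal{C}_a)$ together with their stabilizers via Proposition \ref{prop:fixed-points-ext}. A direct count should yield $\#[I_{1,n}] = \#[I_a] = 13$. Computing the $S$-matrix of $D(\tilde{G})$ by the standard character formula for Drinfeld doubles and extracting the submatrix indexed by $[I_{1,n}] \times [I_a]$, then normalizing as in Theorem \ref{thm:S-and-T-order-2} and reordering to match the CHEVIE convention, should reproduce $S$ exactly; the twists computed from the ribbon element then identify $T_1^{-1}$ with $F_1$ and $T_a^{-2}$ with $F_2$.

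With this dictionary established, the three properties follow formally. Since every element of $2.\mathfrak{S}_4$ is conjugate to its inverse, every simple module is self-dual, so the permutation matrix $Q$ of Theorem \ref{thm:S-and-T-order-2} equals the identity; combined with the fact that $\tilde{S}_{a,1} = {}^t\!\tilde{S}_{1,a}$ and that the entries of $S$ are real, the relation $(\tilde{S}_{a,1}\tilde{S}_{1,a})^2 = \id$ of Theorem \ref{thm:S-and-T-order-2} gives the unitarity $S\,{}^t\!S = \id$. The Frobenius relation $(F_2 S F_1^{-1} S)^2 = \id$ is the third identity of Theorem \ref{thm:S-and-T-order-2}, once one notes that in the spherical setting $\dim^R(\bar{\mathbf{1}}) = 1$ and that the Gauss sums of $\mathcal{C}_1$ are complex conjugate of one another, so by Lemma \ref{lem:tau+tau-} their ratio equals $1$. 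Integrality of the $N_{i,j}^k$ follows from Corollary \ref{cor:twisted-verlinde} combined with the case $\#A = 2$ of Remark \ref{rk:fusion-ring}, expressing $N_{i,j}^k$ in terms of differences of fusion multiplicities $N_{X,Y}^{X_\alpha \otimes Z}$ in $\mathcal{C}$; the non-negativity, which is not automatic from the quotient-ring interpretation, is then verified from the explicit tensor structure of $D(\tilde{G})\text{-}\mathrm{mod}$.

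The main obstacle is the explicit identification of $\tilde{S}_{1,a}$ with the given $S$. This requires enumerating the conjugacy classes of $2.\mathfrak{S}_4$, their centralizers (including cyclic, dicyclic, and binary dihedral subgroups), the irreducible representations with nontrivial action of the center (the spin representations, two of which are two-dimensional and produce the irrational entries $\pm 3\sqrt{2}$), and then matching the resulting submatrix against the particular ordering of rows and columns used by CHEVIE. This is a finite but lengthy character-theoretic computation, most naturally carried out with the help of a computer algebra system; the remaining steps of the proof then proceed by direct appeal to the general machinery of Sections \ref{sec:categ-cont-rep-g} and \ref{sec:central-extension}.
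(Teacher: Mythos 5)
Your overall route is the one the paper itself takes: Proposition \ref{thm:2F4-decat} is stated as a quotation from Geck--Malle, and the paper's own contribution (Section \ref{sec:2F4_fourier}, Theorem \ref{thm:2F4}) is precisely the identification of $S$, $F_1$, $F_2$ with the renormalized crossed $S$-matrix and the twist matrices of $D(\tilde{G})\text{-}\mathrm{mod}$ for $\tilde{G}$ the binary octahedral group, after which (F5) and (F6$'$) follow from Theorem \ref{thm:S-and-T-order-2} and Corollary \ref{cor:twisted-verlinde}. Note, however, that the paper does \emph{not} claim a categorical proof of the non-negativity of the $N_{i,j}^k$: Remark \ref{rk:fusion-ring} only gives integrality, and for $\#A=2$, $z=1$ these constants are signed differences $N_{X,Y}^{Z}-N_{X,Y}^{\varepsilon\otimes Z}$ of fusion multiplicities, so your closing ``verification from the explicit tensor structure'' is exactly the brute-force computation that the categorical argument is designed to avoid (and that Geck--Malle already carried out). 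You should present that part as a citation or as an explicit computation, not as a consequence of the machinery.

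Two intermediate steps also need repair as written. First, $\tau^{+}(\mathcal{C}_1)=\tau^{-}(\mathcal{C}_1)$ does not follow from the Gauss sums being complex conjugates together with Lemma \ref{lem:tau+tau-}; that only gives $\lvert\tau^{+}/\tau^{-}\rvert=1$. What you need is that they are real, which is obtained as in Section \ref{sec:degree_0}: tensoring by $\varepsilon$ has no fixed points among the degree-$1$ simples and negates their twists, so the degree-$1$ part contributes $0$ and $\tau^{\pm}(\mathcal{C}_1)=\tau^{\pm}\bigl(D(\tilde{G})\text{-}\mathrm{mod}\bigr)$, which is real. Second, the statement ``every element of $\tilde{G}$ is conjugate to its inverse, hence every simple module is self-dual'' is not a proof: self-duality of $V_{g,\rho}$ requires $(g,\rho)\sim(g^{-1},\rho^{*})$, i.e.\ the element conjugating $g$ to $g^{-1}$ must act on the (cyclic) centralizer in a way carrying $\rho^{*}$ back to $\rho$; this is true here but must be checked class by class (for $Z_{\tilde{G}}(t_1t_3)\simeq\mathbb{Z}/8\mathbb{Z}$, for instance, an automorphism other than inversion could a priori occur and would make $Q\neq\mathrm{id}$). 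Finally, unitarity should be drawn from the unsquared relation $\tilde{S}_{a,1}\tilde{S}_{1,a}=Q$ of Theorem \ref{thm:S1gSg1} (together with $Q=\mathrm{id}$ and realness of the entries), not from its square, and the identification of $F_2$ carries a sign, $F_2=-T_1^{-2}$, which is harmless only because it disappears upon squaring in $(F_2SF_1^{-1}S)^2$.
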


Thanks to the unitarity of $S$, we can define a free unital and associative $\mathbb{Z}$-algebra with a basis $(b_i)_{1\leq i \leq 13}$ with multiplication given on this basis by
\[
  b_i\cdot b_j := \sum_{k=1}^{13}N_{i,j}^k b_{13},
\]
and the unit element is $b_4$.

\subsubsection{The binary octahedral group and its Drinfeld double}
\label{sec:binary-octahedral}

Using the notation of Section \ref{sec:drinfeld-ext}, we take $G=\mathfrak{S}_4$ and $A=\mathbb{Z}/2\mathbb{Z}$. Since $H^2(G;A)$ is isomorphic to the Klein four-group, there exists four non-isomorphic central extensions of $G$ by $A$. We take for $\tilde{G}$ the binary octahedral group, which has a presentation given by \cite[Theorem 2.8]{hoffman-humphreys}
\[
  \tilde{G}=\left\langle z,t_1,t_2,t_3\ \middle\vert\ z^2=1,t_i^2=z,(t_1t_2)^3=z,(t_2t_3)^3=z,(t_1t_3)^2=z\right\rangle.
\]
In GAP, one can acces to the binary octahedral group using $\verb|SmallGroup(48,28)|$. The element $z$ is central and the quotient map $\pi\colon\tilde{G}\rightarrow G$ sends the element $t_i$ on the transposition $(i, i+1)$. The conjugacy classes are given in \cite[Theorem 3.8]{hoffman-humphreys} and we take as representatives of these classes the following elements:
\[
1,z,t_1,t_1t_3,t_1t_2,zt_1t_2,t_1t_2t_3,zt_1t_2t_3.
\]

In order to compute the simple modules of $D(\tilde{G})$, we need to know the structure of the centralizers of the above elements. Computations with GAP shows that
\begin{multline*}
  Z_{\tilde{G}}(1)=Z_{\tilde{G}}(z)={\tilde{G}},Z_{\tilde{G}}(t_1)\simeq\mathbb{Z}/4\mathbb{Z},Z_{\tilde{G}}(t_1t_3)=\mathbb{Z}/8\mathbb{Z},Z_{\tilde{G}}(t_1t_2)=Z_{\tilde{G}}(zt_1t_2)\simeq\mathbb{Z}/6\mathbb{Z}\\\text{and}\quad Z_{\tilde{G}}(t_1t_2t_3)=Z_{\tilde{G}}(zt_1t_2t_3)\simeq\mathbb{Z}/8\mathbb{Z}.
\end{multline*}

The character table of $\tilde{G}$ is given in \cite[Table 4.7]{hoffman-humphreys} and reproduced in Table \ref{tbl:char-table}. 

\begin{table}[h]
  \centering
\begin{tabular}[h]{c|cccccccc}
  & $1$ & $z$ & $t_1$ & $t_1t_3$ & $t_1t_2$ & $zt_1t_2$ & $t_1t_2t_3$ & $zt_1t_2t_3$\\
\hline
$1$ & $1$ & $1$ & $1$ & $1$ & $1$ & $1$ & $1$ & $1$\\
$\varepsilon$ & $1$ & $1$ & $-1$ & $1$ & $1$ & $1$ & $-1$ & $-1$\\ 
$\chi_2$ & $2$ & $2$ & $0$ & $2$ & $-1$ & $-1$ & $0$ & $0$\\
$\chi_3$ & $3$ & $3$ & $1$ & $-1$ & $0$ & $0$ & $-1$ & $-1$\\ 
$\chi_3'$ & $3$ & $3$ & $-1$ & $-1$ & $0$ & $0$ & $1$ & $1$\\
$\psi_2$ & $2$ & $-2$ & $0$ & $0$ & $1$ & $-1$ & $\sqrt{2}$ & $-\sqrt{2}$\\
$\psi_2'$ & $2$ & $-2$ & $0$ & $0$ & $1$ & $-1$ & $-\sqrt{2}$ & $\sqrt{2}$\\
$\psi_4$ & $4$ & $-4$ & $0$ & $0$ & $-1$ & $1$ & $0$ & $0$ 
\end{tabular}
\caption{Character table of $\tilde{G}$}\label{tbl:char-table}
\end{table}

Except from the centralizers of $1$ and $z$, every other centralizer is cyclic and there is then $56$ irreducible $D(\tilde{G})$-modules. We introduce the following labelling: for $k\in{4,6,8}$, we choose a primitive $k$-th root of unity $\zeta_k$ and we identify $\Irr(\mathbb{Z}/k\mathbb{Z})$ with $\left\{\zeta_k^r\ \middle\vert\ r\in\mathbb{Z}/k\mathbb{Z}\right\}$. Let $\varepsilon$ be the representation $V_{z,1}$, which is of dimension $1$, of twist $1$ and of tensor square isomorphic to the unit object. The category $D(\tilde{G})\text{-}\mathrm{mod}$ is graded by $\mathbb{Z}/2\mathbb{Z}$. The component of degree $0$ contains the $30$ simple modules labelled by
\begin{align*}
  (&1,1),(1,\varepsilon),(1,\chi_2),(1,\chi_3),(1,\chi_3'),(z,1),(z,\varepsilon),(z,\chi_2),(z,\chi_3),(z,\chi_3'),\\
  (&t_1,1),(t_1,-1),(t_1t_3,1),(t_1t_3,\zeta_8^2),(t_1t_3,\zeta_8^4),(t_1t_3,\zeta_8^6),(t_1t_2,1),(t_1t_2,\zeta_6^2),(t_1t_2,\zeta_6^4),\\
  (&zt_1t_2,1),(zt_1t_2,\zeta_6^2),(zt_1t_2,\zeta_6^4),(t_1t_2t_3,1),(t_1t_2t_3,\zeta_8^2),(t_1t_2t_3,\zeta_8^4),(t_1t_2t_3,\zeta_8^6),\\
  (&zt_1t_2t_3,1),(zt_1t_2t_3,\zeta_8^2),(zt_1t_2t_3,\zeta_8^4),(zt_1t_2t_3,\zeta_8^6),
\end{align*}
and the component of degree $1$ contains the $26$ others labelled by
\begin{align*}
  (&1,\psi_2),(1,\psi_2'),(1,\psi_4),(z,\psi_2),(z,\psi_2'),(z,\psi_4),(t_1,\zeta_4),(t_1,\zeta_4^3),\\
  (&t_1t_3,\zeta_8),(t_1t_3,\zeta_8^3),(t_1t_3,\zeta_8^5),(t_1t_3,\zeta_8^7),(t_1t_2,\zeta_6),(t_1t_2,\zeta_6^3),(t_1t_2,\zeta_6^5),\\
  (&zt_1t_2,\zeta_6),(zt_1t_2,\zeta_6^3),(zt_1t_2,\zeta_6^5),(t_1t_2t_3,\zeta_8),(t_1t_2t_3,\zeta_8^3),(t_1t_2t_3,\zeta_8^5),(t_1t_2t_3,\zeta_8^7),\\
  (&zt_1t_2t_3,\zeta_8),(zt_1t_2t_3,\zeta_8^3),(zt_1t_2t_3,\zeta_8^5),(zt_1t_2t_3,\zeta_8^7).
\end{align*}

We saw in proposition \ref{prop:fixed-points-ext} that the simple objects which are fixed under tensorization by $\varepsilon$ are the $V_{g,\rho}$ with $[g]=[zg]$ and $^{r}\!\rho\simeq \rho$, where $r\in \tilde{G}$ with $zg=rgr^{-1}$. Only the conjugacy classes of $t_1$ and of $t_1t_3$ satisfy the first condition.

Let us start with the study of the representations of the form $V_{t_1,\rho}$. The centralizer $Z_{\tilde{G}}(t_1)$ is isomorphic to $\mathbb{Z}/4\mathbb{Z}$ and $t_1$ generates this group. We check that $t_3t_1t_3^{-1}=zt_1$ and therefore we look at the representations $\rho$ of $Z_{\tilde{G}}(t_1)$ which satisfy ${}^{t_3}\!\rho\simeq \rho$. Since $t_3t_1t_3^{-1}=zt_1=t_1^{-1}$, only the representation labelled by $1$ and $\zeta_4^2$ satisfy this condition.

Now, we continue with the representations of the form $V_{t_1t_3,\rho}$. The centralizer $Z_{\tilde{G}}(t_1t_3)$ is isomorphic to $\mathbb{Z}/8\mathbb{Z}$ and is generated by $h:=t_1t_2t_1t_3t_2$. We check that $t_3t_1t_3t_3^{-1}=zt_1t_3$ and therefore we look at the representations $\rho$ of $Z_{\tilde{G}}(t_1t_3)$ which satisfy ${}^{t_3}\!\rho\simeq \rho$. Since $t_3ht_3^{-1}=h^{-1}$, only the representation labelled by $1$ and $\zeta_8^4$ satisfy this condition.

Finally, we obtain $4$ simple representations which are fixed under tensorization by $\varepsilon$, which agrees with Theorem \ref{thm:S1gSg1}. It only remains to compute the non-trivial orbits of isomorphism classes of simple objects under tensorization by $\varepsilon$.

In degree $0$, we have the following isomorphisms:
\begin{align*}
  &\varepsilon\otimes V_{1,1} \simeq V_{z,1}, &
  &\varepsilon\otimes V_{1,\varepsilon} \simeq V_{z,\varepsilon},&
  &\varepsilon\otimes V_{1,\chi_2} \simeq V_{z,\chi_2}, \\
  &\varepsilon\otimes V_{1,\chi_3} \simeq V_{z,\chi_3},&
  &\varepsilon\otimes V_{1,\chi_3'} \simeq V_{z,\chi_3'}, &
  &\varepsilon\otimes V_{t_1t_3,\zeta_8^2} \simeq V_{t_1t_3,\zeta_8^6},\\
  &\varepsilon\otimes V_{t_1t_2,1} \simeq V_{zt_1t_2,1},&
  &\varepsilon\otimes V_{t_1t_2,\zeta_6^2} \simeq V_{zt_1t_2,\zeta_6^2},&
  &\varepsilon\otimes V_{t_1t_2,\zeta_6^4} \simeq V_{zt_1t_2,\zeta_6^4},\\
  &\varepsilon\otimes V_{t_1t_2t_3,1} \simeq V_{zt_1t_2t_3,1},&
  &\varepsilon\otimes V_{t_1t_2t_3,\zeta_8^2} \simeq V_{zt_1t_2t_3,\zeta_8^2},&
  &\varepsilon\otimes V_{t_1t_2t_3,\zeta_8^4} \simeq V_{zt_1t_2t_3,\zeta_8^4},\\
  &\varepsilon\otimes V_{t_1t_2t_3,\zeta_8^6} \simeq V_{zt_1t_2t_3,\zeta_8^6},
\end{align*}
and in degree $1$ we have the following isomorphisms:
\begin{align*}
  &\varepsilon\otimes V_{1,\psi_2} \simeq V_{z,\psi_2}, &
  &\varepsilon\otimes V_{1,\psi_2'} \simeq V_{z,\psi_2'},&
  &\varepsilon\otimes V_{1,\psi_4} \simeq V_{z,\psi_4}, \\
  &\varepsilon\otimes V_{t_1,\zeta_4} \simeq V_{t_1,\zeta_4^3},&
  &\varepsilon\otimes V_{t_1t_3,\zeta_8} \simeq V_{t_1t_3,\zeta_8^7}, &
  &\varepsilon\otimes V_{t_1t_3,\zeta_8^3} \simeq V_{t_1t_3,\zeta_8^5},\\
  &\varepsilon\otimes V_{t_1t_2,\zeta_6} \simeq V_{zt_1t_2,\zeta_6},&
  &\varepsilon\otimes V_{t_1t_2,\zeta_6^3} \simeq V_{zt_1t_2,\zeta_6^3},&
  &\varepsilon\otimes V_{t_1t_2,\zeta_6^5} \simeq V_{zt_1t_2,\zeta_6^5},\\
  &\varepsilon\otimes V_{t_1t_2t_3,\zeta_8} \simeq V_{zt_1t_2t_3,\zeta_8},&
  &\varepsilon\otimes V_{t_1t_2t_3,\zeta_8^3} \simeq V_{zt_1t_2t_3,\zeta_8^3},&
  &\varepsilon\otimes V_{t_1t_2t_3,\zeta_8^5} \simeq V_{zt_1t_2t_3,\zeta_8^5},\\
  &\varepsilon\otimes V_{t_1t_2t_3,\zeta_8^7} \simeq V_{zt_1t_2t_3,\zeta_8^7}.
\end{align*}

We do the following choice of representatives of orbits of isomorphism classes of simple objects of degree $0$: 
\[
  V_{z,\chi_3},V_{t_1t_2,1},V_{z,\chi_2},V_{1,1},V_{zt_1t_2t_3,1},V_{t_1t_2t_3,\zeta_8^6},V_{t_1t_2t_3,\zeta_8^2},V_{t_1t_3,\zeta_8^6},V_{z,\chi_3'},V_{z,\varepsilon},V_{t_1t_2,\zeta_6^4},V_{t_1t_2,\zeta_6^2},V_{zt_1t_2t_3,\zeta_8^4}
\]
and the following choice for the degree $1$:
\begin{align*}
  &V_{t_1t_3,\zeta_8},V_{t_1t_3,\zeta_8^3},V_{t_1,\zeta_4},V_{1,\psi_4},V_{1,\psi_2'},V_{1,\psi_2},V_{t_1t_2t_3,\zeta_8^5},V_{t_1t_2t_3,\zeta_8^3},V_{t_1t_2t_3,\zeta_8^7},V_{t_1t_2t_3,\zeta_8},V_{zt_1t_2,\zeta_6},V_{zt_1t_2,\zeta_6^5},\\
  &V_{zt_1t_2,\zeta_6^3}.
\end{align*}

With these choices and orders, we obtain
\[
  S_{0,1}=\left(
  \begin{smallmatrix}
    6&6&12&-12&-6&-6&-6&-6&-6&-6&0&0&0\\
    0&0&0&-8&8&8&0&0&0&0&8&8&-16\\
    -12&-12&0&-8&-4&-4&0&0&0&0&8&8&8\\
    6&6&12&4&2&2&6&6&6&6&8&8&8\\
    6\sqrt 2&-6\sqrt 2&0&0&-6\sqrt 2&6\sqrt 2&6\sqrt 2&6\sqrt 2&-6\sqrt 2&-6\sqrt 2&0&0&0\\
    6\sqrt 2&-6\sqrt 2&0&0&6\sqrt 2&-6\sqrt 2&-6\sqrt 2&6\sqrt 2&-6\sqrt 2&6\sqrt 2&0&0&0\\
    6\sqrt 2&-6\sqrt 2&0&0&6\sqrt 2&-6\sqrt 2&6\sqrt 2&-6\sqrt 2&6\sqrt 2&-6\sqrt 2&0&0&0\\
    0&0&0&0&0&0&12&-12&-12&12&0&0&0\\
    6&6&-12&-12&-6&-6&6&6&6&6&0&0&0\\
    -6&-6&12&-4&-2&-2&6&6&6&6&-8&-8&-8\\
    0&0&0&-8&8&8&0&0&0&0&8&-16&8\\
    0&0&0&-8&8&8&0&0&0&0&-16&8&8\\
    6\sqrt 2&-6\sqrt 2&0&0&-6\sqrt 2&6\sqrt 2&-6\sqrt 2&-6\sqrt 2&6\sqrt 2&6\sqrt 2&0&0&0\\
  \end{smallmatrix}\right)
\]
and the diagonal matrices $T_0$ and $T_1^2$ given by the action of the inverse of the twist are
\[
  T_0=\diag(1, 1, 1, 1, 1, \zeta_4, -\zeta_4, -1, 1, 1, \zeta_3, \zeta_3^2, -1),
\]
and
\[
  T_1^2=\diag(-1, -1, -1, 1, 1, 1, -\zeta_4, \zeta_4, \zeta_4, -\zeta_4, \zeta_3^2, \zeta_3, 1).
\]

The category $D(\tilde{G})\text{-}\mathrm{mod}$ is spherical, so that the invertible object $\bar{\mathbf{1}}$ is the unit object $\mathbf{1}$. Finally, we renormalize $S_{0,1}$ by a factor $24$ which is the positive square root of $\frac{\dim((D(\tilde{G})\text{-}\mathrm{mod})_0)}{2}$, and we denote by $\tilde{S}_{0,1}$ the matrix $\frac{S_{0,1}}{24}$.

\begin{theoreme}
  \label{thm:2F4}
  The matrix $\tilde{S}_{0,1}$ is equal to the Fourier matrix of the big family of unipotent characters of ${}^2\!F_4$. Moreover the matrix $T_1^2$ is the opposite of the diagonal matrix of eigenvalues of the Frobenius. More precisely, we have:
\[
   \tilde{S}_{0,1}=S,\ F_1=T_0 \quad \text{and} \quad F_2=-T_1^{-2}.
\]
\end{theoreme}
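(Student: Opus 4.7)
The proof is ultimately a direct verification: every ingredient in the statement has been explicitly computed in the preceding paragraphs, and the remaining work is to compare the displayed matrices with the data defining the Fourier matrix for ${}^2\!F_4$. The plan is organized around three computations.

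First, I would invoke the standard formula for the $S$-matrix of the modular category $D(\tilde G)\text{-}\mathrm{mod}$, namely
\[
  S_{V_{g,\rho},V_{h,\sigma}}=\frac{1}{|Z_{\tilde G}(g)|\,|Z_{\tilde G}(h)|}\sum_{\substack{x\in\tilde G\\xgx^{-1}\in Z_{\tilde G}(h)}}\overline{\chi_\rho(x^{-1}hx)\,\chi_\sigma(xgx^{-1})},
\]
and for the twist, $\theta_{V_{g,\rho}}=\chi_\rho(g)/\chi_\rho(1)$. With the character table (Table \ref{tbl:char-table}) and the description of the centralizers recalled above, these formulas can be evaluated on the chosen representatives of $[I_0]$ and $[I_1]$. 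This is what produces the matrices $S_{0,1}$, $T_0$ and $T_1^2$ displayed in the text.

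Second, I would check the normalization: Proposition \ref{prop:modularisation-double-extension} together with $\dim D(\tilde G)\text{-}\mathrm{mod}=|\tilde G|^2=2304$ and the faithfulness of the $\widehat A$-grading (Lemma \ref{lem:faithful-grading}) give $\dim((D(\tilde G)\text{-}\mathrm{mod})_0)=1152$, so $\sqrt{\tfrac12\dim((D(\tilde G)\text{-}\mathrm{mod})_0)}=24$; since the category is spherical, $\bar{\mathbf 1}=\mathbf 1$ and $\dim^R(\bar{\mathbf 1})=1$. Hence $\tilde S_{0,1}=S_{0,1}/24$ in the sense of Theorem \ref{thm:S-and-T-order-2}.

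Third, I would carry out the comparison. Entry by entry, $S_{0,1}/24$ matches the Fourier matrix $S$ of the big family, so $\tilde S_{0,1}=S$. For the twists, the diagonals $T_0$ and $F_1$ coincide by inspection, and $-T_1^{-2}=F_2$ follows from the identities $(-\zeta_4)^{-1}=\zeta_4$ and $\zeta_3^{-1}=\zeta_3^2$ applied to the displayed diagonal. This yields the three equalities of the theorem.

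The main obstacle, and the reason the proof is largely bookkeeping rather than conceptual, is the choice of representatives and their ordering: the columns of $S_{0,1}$ are well-defined only up to a sign (Remark \ref{rk:change-iso-X}), and the rows and columns of the CHEVIE convention follow a specific ordering. One has to check that the representatives listed in Section \ref{sec:2F4_fourier} produce precisely the ordering and signs used in the Fourier matrix $S$; once this matching is fixed, the coincidence of all remaining entries is a finite verification using only the character table and the $S$-matrix formula recalled above.
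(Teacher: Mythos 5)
Your proposal takes essentially the same route as the paper, which in fact gives no separate proof of Theorem \ref{thm:2F4}: the statement records the outcome of the explicit computations carried out just before it (classification of the simple $D(\tilde{G})$-modules, the $\varepsilon$-orbits and fixed points, the displayed $S_{0,1}$, $T_0$, $T_1^2$, and the renormalization by $24=\sqrt{\tfrac{1}{2}\dim((D(\tilde{G})\text{-}\mathrm{mod})_0)}$), and is then checked entry by entry against the Geck--Malle data exactly as you describe. Two small bookkeeping points: the $S$-matrix formula you quote, with prefactor $1/(\lvert Z_{\tilde{G}}(g)\rvert\,\lvert Z_{\tilde{G}}(h)\rvert)$, is the unitary normalization and would produce $S_{0,1}/\lvert\tilde{G}\rvert$ rather than the displayed integer matrix that gets divided by $24$, and the sign ambiguity of Remark \ref{rk:change-iso-X} affects the rows (choices of degree-$0$ representatives), not the columns, since changing a degree-$1$ representative leaves the crossed $S$-matrix unchanged.
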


This theorem gives another proof of \cite[Theorem 6.9, (F5), (F6)']{geck-malle} which does not require the explicit computation of the structure constants or the product $(F_2SF_1^{-1}S)^2$.

\begin{remark}
  \label{rk:why-the-group}
  The appearance of the binary octahedral group is still quite mysterious, but not the one of $\mathfrak{S}_4$. Indeed, there exists a unipotent class in the reductive group $F_4(\mathbb{C})$, denoted by $F_4(a_3)$ in \cite[Section 13.3]{carter}, whose component group of the centralizer is $\mathfrak{S}_4$, and which is the special class associated to the corresponding family of unipotent characters.
\end{remark}



\bibliographystyle{smfalpha}
\bibliography{biblio}

\end{document}